\documentclass
{amsart}

\allowdisplaybreaks

\usepackage{color}
\usepackage{amsmath}
\usepackage{amssymb}
\usepackage[abbrev]{amsrefs}
\usepackage{stmaryrd}
\usepackage{graphicx}
\usepackage{bm}
\usepackage{xcolor}

\newtheorem{theorem}{Theorem}[section]
\newtheorem{proposition}[theorem]{Proposition}
\newtheorem{lemma}[theorem]{Lemma}

\theoremstyle{definition}
\newtheorem{definition}[theorem]{Definition}

\theoremstyle{definition}
\newtheorem{remark}[theorem]{Remark}

\numberwithin{equation}{section}

\newcommand{\bfone}{{\mathbf{1}}}
\newcommand{\eps}{{\varepsilon}}

\newcommand{\abs}[1]{\left|{#1}\right|}
\newcommand{\norm}[1]{\lVert{#1}\rVert}

\newcommand{\F}{{\mathbb{F}}}
\newcommand{\N}{{\mathbb{N}}}

\newcommand{\R}{{\mathbb{R}}}

\newcommand{\Mean}{{{\mathbb{E}}}}
\newcommand{\Prob}{{{\mathbb{P}}}}

\newcommand{\llb}{{\llbracket}}
\newcommand{\rrb}{{\rrbracket}}

\newcommand{\cF}{{\mathcal{F}}}

\begin{document}
\title[Mean viability]
{Mean viability theorems and\\ 
second-order Hamilton--Jacobi 
equations} 

\author{Christian Keller}
\address
{Department of Mathematics, 
University of  Central Florida,
Orlando, FL 32816, United States}
\thanks{This research  was
 supported in part  by NSF-grant DMS-2106077.}
\email{christian.keller@ucf.edu}

\subjclass[2010]{60H30; 35K10; 49L25}
\keywords{Viability, stochastic control, path-dependent partial differential equations, contingent solutions{\color{black}, viscosity solutions}}

\begin{abstract}
We introduce
 the notion of mean viability for controlled stochastic differential equations 
 and establish counterparts of Nagumo's classical viability theorems 
 (necessary and sufficient 
  conditions for mean viability).
As an application, we  provide  a purely probabilistic proof
of {\color{black} a comparison principle and of}
existence 
  {\color{black} for contingent and viscosity solutions of}
second-order  fully nonlinear path-dependent Hamilton--Jacobi--Bellman 
equations. 
{\color{black}
 We do not use 
  compactness and optimal stopping arguments, which are usually employed 
 in the literature on viscosity solutions for second-order path-dependent  PDEs.} 
\end{abstract}
\maketitle
\pagestyle{plain}

\section{Introduction}

We {\color{black} establish a comparison principle and}
existence 
for second-order Hamilton--Jacobi equations with
 viability-theoretical methods.
To this end, we introduce the notion of mean viability in an appropriate setting.
 Roughly speaking, a pair $(v,K)$ is mean viable
if there is 
a control $a$ such that
$\Mean [v(t,X^a)]\in K$ for all $t\in\R_+$.
Here, $v$ is 
a  function, $K$ 
a set, and $X^a$ 
solves a controlled stochastic differential equation.
Mean viability theorems, which provide necessary and sufficient criteria for mean viability, are main contributions of this paper.
Next, we consider
(path-dependent)  
Hamilton--Jacobi--Bellman (HJB)
equations  of the form
\begin{align}\label{E:Intro:HJB}
&\partial_t u+\inf_{a\in A} \left[b(t,\mathbf{x},a)\,\partial_{\mathbf{x}} u 
+\frac{1}{2}\sigma^2(t,\mathbf{x},a)\, \partial^2_{\mathbf{x}\mathbf{x}} u\right]=0\quad\text{on $[0,T)\times C(\R_+,\R^d)$}
\end{align}
together with  terminal conditions of the form $u(T,\cdot)=h$ on $C(\R_+,\R^d)$.
Generalized 
sub- and supersolutions  are defined 
via certain upper and lower directional derivatives. 
Using our mean viability theorems, 
we {\color{black} prove comparison and} 
 existence 
  for generalized semisolutions.  
{\color{black} Finally, we introduce notions of smooth and  viscosity solutions 
and provide connections to our previously defined generalized  solutions.
This yields a comparison principle for viscosity solutions.} 

\subsection{Background and motivation.} 
There is a large body of research
that applies 
(deterministic) viability theory to 
first-order Hamilton--Jacobi equations  and optimal control (see, e.g., 
\cite{AubinEtAl11Viability,AubinHaddad02PPDE, BettiolVinter17SICON,CannarsaFrankowska91SICON,Carja14SICON,Frankowska89AMO,Frankowska93SICON,Vinter,SubbotinBook}). In particular, existence and uniqueness of generalized  
nonsmooth solutions for Hamilton--Jacobi equations
is established, including for HJB equations related to   optimal control problems with state constraints and discontinuous data.
There are additional   applications in optimal control 
(e.g., optimal synthesis of feedback controls).
Furthermore,  
 viability theory allows for alternative proofs of the comparison
principle for viscosity solutions (avoiding the doubling-the-variables approach). 

While there are also many results on stochastic viability 
(see, e.g., \cite{AubinDaPrato90, BardiGoatin99, BardiJensen02, BFQ, BuckdahnEtAl98, Goreac11}), there have been less applications compared to the deterministic case.
In particular, it is not clear if the existing results on stochastic viability can be used to show uniqueness for second-order HJB equations. Purpose of this work is to remedy this situation,
 i.e., to obtain counterparts for  at least some of the mentioned deterministic results above
by working with a different probabilistic notion of viability: Mean viability instead of stochastic viability.


\subsection{Stochastic and mean viability: A short overview and our contributions}
For simplicity, we consider  the Markovian case in this subsection.
Note that the rest of this paper considers the non-Markovian case.

We recall  the standard notions of stochastic viability from \cite{BardiJensen02,BFQ,BuckdahnEtAl98}.
Consider a controlled stochastic differential equation of the form
\begin{equation}\label{E:MarkovSDE}
\begin{split}
dX^{t,x,a}_s&=b(s,X^{t,x,a}_s,a_s)\,ds+\sigma(s,X^{t,x,a}_s,a_s)\,dW_s\text{ on $[t,\infty)$,}\\
X^{t,x,a}_t&=x\in\R^d.
\end{split}
\end{equation}
A set $K\subset\R^d$ is \emph{viable} for \eqref{E:MarkovSDE} if, whenever $x\in K$, there is a control $a$
such that $X^{t,x,a}_s$ stays in $K$ for all $s\ge t$. Similarly, $K$ is \emph{$\eps$-viable} for \eqref{E:MarkovSDE} if, whenever $x\in K$, there is some constant
$c>0$ such that,
for every $\eps>0$,
there is a control $a$ with
\begin{align*}
\Mean \left[\int_t^\infty e^{-cs} d^2_K(X^{t,x,a}_s)\,ds\right]<\eps.
\end{align*}
 Here, $d_K(x)$ denotes the distance from a point $x$ to the set $K$. Note that for $\eps$-viability,
one still wants  the \emph{trajectories of  $X^{t,x,a}$} to be close to $K$ in some sense.

Here, we consider 
another notion of viability, which formally subsumes stochastic viability.
Given a function $v:\R_+\times\R^d\to\R$ and a set $K\subset\R$, we call the pair $(v,K)$ \emph{mean viable} for
\eqref{E:MarkovSDE} if, whenever $v(t,x)\in K$, there is a control $a$ such that
\begin{align}\label{E:General:Case}
\Mean\left[v(s,X^{t,x,a}_s)\right]\in K\text{ for all $s\ge t$.}
\end{align}

To obtain applications for second-order Hamilton--Jacobi equations in the 
spirit 
of the first-oder case (e.g., as done in \cite{Frankowska93SICON}), we focus on
a suitable stochastic counterpart 
 of the \emph{deterministic viability} of the epigraph 
of $v$,
i.e., if $y\ge v(t,x)$, then there is 
a trajectory $(x(\cdot),y(\cdot))$ starting at $(x,y)$ at time $t$ that satisfies $y(s)\ge v(s,x(s))$ for all $s\ge t$.
Hence,  we  consider the following special case of \eqref{E:General:Case}, for which 
we establish sufficient and necessary tangential conditions:
\begin{align*}
\Mean\left[\widehat{v}(s,X_s^{t,x,a},y)\right]\in K\text{ for all $s\ge t$},
\end{align*}
where $\widehat{v}$ is of the form $\widehat{v}(t,x,y)=v(t,x)-y$ and $K=(-\infty,0]$. 
This corresponds, up to some 
technical details, to the notion of \emph{stochastic $u$-stability} in
\cite{SubbotinaEtAl85,Subbotina06}, where infinitesimal necessary  and sufficient criteria are established as well.
Note that the diffusion term in \cite{SubbotinaEtAl85,Subbotina06} can only be time-dependent, i.e., of the form $\sigma=\sigma(t)$.
Our diffusion term can also be  state- and control-dependent, i.e., of the form $\sigma=\sigma(t,x,a)$.
Moreover, (after this subsection) our setting is non-Markovian whereas the setting
in \cite{SubbotinaEtAl85,Subbotina06} is Markovian.


\subsection{Contributions to partial differential equations}
The probabilistic  approach in this paper provides a
third method of proving the  comparison principle for a fairly general class of possibly degenerate parabolic HJB equations
 besides the standard viscosity approach (see \cite{UserGuide}) 
 and the 
  approximation arguments used in \cite{Lions83HJB2} and many works in the path-dependent case (see below).
  {\color{black} Note that a probabilistic approach different from ours has been used in \cite{RTZ20SICON}
  albeit only in the semilinear case.}

Our notion of   generalized 
solutions for \eqref{E:Intro:HJB}  in Section~\ref{S:HJB} is motivated by  minimax solutions for 
Hamilton--Jacobi equations 
of the form $\partial_t v+\frac{1}{2} \sigma^2(t)\,\partial_{xx}^2 v + H(t,x,\partial_x v)=0$ on
$[0,T)\times\R^d$ in \cite{SubbotinaEtAl85,Subbotina06}  (see also \cite{BK22SICON} for the path-dependent case).
Our  main contribution  
 compared  to \cite{BK22SICON,SubbotinaEtAl85,Subbotina06} 
  is the incorporation of the controlled volatility case.

 Our notion of viscosity solutions in Subsection~\ref{SS:ViscSol} is in the spirit of \cite{cosso18,EKTZ11,ETZ_I}, i.e.,
 test functions  are required to be  tangent in mean
to the candidate solution.  
Thereby, the spaces of test functions are enlarged, which makes it, in principle, easier to prove uniqueness but more difficult 
to prove existence.

Note that our notion of viscosity solutions is relatively specific (only HJB equations of
the form \eqref{E:Intro:HJB} are covered). 
However,  our comparison principle (Theorem~\ref{T:viscosity:comparison})
is  stronger than any other comparison principle for path-dependent fully nonlinear second-order HJB equations
in the literature in the following sense: \cite{zhou2020viscosity} requires continuity of viscosity sub- and supersolution
(see, in particular, Remark 6.7 therein) 
whereas we allow
for semicontinuity in time; 
\cite{cosso21v2path}
 requires a stronger condition for the controlled diffusion term
(the additional condition (C) therein); 
\cite{ETZ_II}
 requires first,  strong uniform continuity assumptions 
 (which means that  $\sigma(\cdot,\cdot,\cdot)$ needs to be of the form $\sigma(t,\mathbf{x},a)=\sigma(a)$),
and second, uniform 
ellipticity whereas we need just (A1) and (A2) from Section~\ref{S:Data};
\cite{RR20SIMA,RTZ17}  require
a (stronger) $\norm{\cdot}_{L^p}$-continuity 
compared to our $\norm{\cdot}_\infty$-continuity;
\cite{EkrenZhang16PUQR} operates in a  ``piece-wise Markovian" setting, which
requires stronger conditions for existence. 

Moreover, note that the comparison principle for fully nonlinear HJB equations in \cite{cosso18} was left as
an open problem. Comparison was established therein for equations    on $[0,T)\times C([0,T],H)$, $H$ being a Hilbert space,
that are \emph{linear in the derivatives} and also involve an unbounded operator on $H$. 
Since our definition of viscosity solutions is substantially the same as in \cite{cosso18}
and since we (like \cite{cosso18} in the linear case)
 do not rely on compactness arguments and also do not use the Crandall--Ishii lemma,
our method gives new hope to attack the  mentioned open problem in \cite{cosso18}
(see also p.~364 in \cite{FabbriGozziSwiech} for further related  discussion).



\subsection{Organization of the rest of the paper}
In Section~\ref{S:Notation}, we introduce general notation.
Section~\ref{S:Data} contains the data for our problem and the standing assumptions.
Topic of Section~\ref{S:MeanViability} is mean viability:  The notion itself 
and the mean viability theorems, i.e., necessary and sufficient conditions for mean viability. 
In Section~\ref{S:HJB}, the mean viability theorems are
applied to second-order HJB equations. We prove existence
and 
{\color{black} a comparison principle}
 for so-called quasi-contingent solutions. {\color{black} In
Section~\ref{S:ClassicalAndViscosity}, we introduce classical and viscosity solutions for our
HJB equations and establish connections to quasi-contingent solutions, which provides us with
 a comparison principle for viscosity solutions. 
}
 Section~\ref{Section:Step3} 
 contains 
  technical material
  needed for the proof of 
  one of the mean viability theorems.
   
\section{Notation}\label{S:Notation}
Let $\N=\{1,2,\ldots\}$, $\overline{\R}=\R\cup\{\pm\infty\}$, and $\R_+=[0,\infty)$. 
Let $\mathcal{B}(E)$ be 
the Borel $\sigma$\--field of a to\-po\-lo\-gi\-cal space $E$ 
and $\mathcal{P}(E)$ the set of 
probability measures on $\mathcal{B}(E)$.

Fix $d\in\N$. Let $\Omega=C(\R_+,\R^d)$, $\Prob$ be the Wiener measure on
$\mathcal{B}(\Omega)$, 
$\Mean=\Mean^\Prob$ the corresponding expectation,
 $W=(W_t)_{t\ge 0}$ defined by $W_t(\omega)=\omega(t)$ the
canonical process, and $\F=(\cF_t)_{t\ge 0}$ defined by $\cF_t=\sigma(W_s:\,0\le s\le t)$ the 
canonical filtration on $\Omega$. We also consider the shifted  filtrations $\F^t=(\cF^t_s)_{s\ge t}$
defined by $\cF^t_s=\sigma(W_r-W_t:\, t\le r\le s)$ with their progressive
$\sigma$-fields $\mathrm{Prog}(\F^t)$.

Given a normed space $E$, a $\sigma$-field $\mathcal{G}$, and $t\in\R_+$,
we use the spaces 
$\mathbb{L}^2(\mathcal{G},\Prob,E)$ of all $\mathcal{G}$-measurable random variables
$\xi:\Omega\to E$ with $\Mean [\abs{\xi}^2]<\infty$
and
\begin{align*}
\mathbb{L}^0(\mathbb{F}^t, E)&:=
\{\text{all $\F^t$-progressive processes from $\R_+\times\Omega$ to $E$}\},\\
\mathbb{L}^2(\mathbb{F}^t,\Prob,E)&:=
\{ X\in \mathbb{L}^0(\mathbb{F}^t, E):\, 
\norm{X}_{\mathbb{L}^2(\mathbb{F}^t,\Prob,E)}^2:=
\Mean\left[\int_t^\infty \abs{X_s}^2\,ds\right]<\infty\},\\
\mathbb{S}^2(\mathbb{F}^t,\Prob,E)&:=
\{\text{all $\Prob$-a.s.~continuous }
X\in\mathbb{L}^0(\mathbb{F}^t, E)
\text{ with $\Mean [
\norm{X}_\infty^2]<\infty$}\},
\end{align*}
where $\cF^t_s$ is implicitly understood to be equal to $\{\emptyset,\Omega\}$  whenever $s<t$
and $\norm{\cdot}_\infty$ is the usual sup norm (cf.~\cite{Zhang17book} for much of our notation).

The transpose of a matrix $q$ is denoted by $q^\top$. Given real matrices $q$ and $\tilde{q}$
of appropriate dimensions, we write $q:\tilde{q}$ for the trace of $q \tilde{q}^\top$. 
The identity matrix in $\R^{d\times d}$ is denoted by $I_{d\times d}$.
We always write $0$ for any zero vector. 

We write  $\mathbf{1}$ for  indicator functions, i.e., given a set $E$, $\mathbf{1}_E(x)=1$ if $x\in E$ and
$\mathbf{1}_E(x)=0$ otherwise. {\color{black} Dirac measures are denoted by $\delta_x$.}

We use the usual notation for stochastic intervals (Definition~60 in \cite{DM}), e.g., 
for $\F$-stopping times $\tau_1$ and $\tau_2$, 
$\llb \tau_1, \tau_2 \rrb=\{(t,\omega)\in\R_+\times\Omega:\, \tau_1(\omega)\le t\le \tau_2(\omega)\}$.

Given $s$, $t\in\R$, put $s\vee t:=\max\{s,t\}$ and $s\wedge t:=\min\{s,t\}$.

We write l.s.a.~for lower semianalytic, l.s.c.~for lower semicontinuous, and u.s.c.~for upper semicontinuous.

Non-empty subsets of $\R_+\times\Omega$ are always understood as pseudo-metric spaces
equipped with the pseudo-metric $((t,\mathbf{x}),(t^\prime,\mathbf{x}^\prime))\mapsto
\abs{t-t^\prime}+\norm{\mathbf{x}_{\cdot\wedge t}-\mathbf{x}^\prime_{\cdot\wedge t^\prime}}_\infty$.

\section{Data and standing assumptions}\label{S:Data}
Data
are a 
{\color{black} Borel subset $A$ of $[0,1]$},\footnote{
This is not restrictive (see, e.g., p.~25 in \cite{ElKarouiTanII}) but convenient for technical reasons.} 
{\color{black} which is used as action space,}
and 
functions
\begin{align}\label{E:Data}
b:\R_+\times\Omega\times A\to\R^d,\,\sigma:\R_+\times\Omega\times A\to\R^{d\times d},\,h:\Omega\to\R.
\end{align}
If there is danger of ambiguity, then we shall write $b(\cdot,\cdot,\cdot)$, etc.~instead of $b$, etc. Moreover, we fix a triple
\begin{align}\label{E:GlobalTriple}
(a^\circ,p^\circ,q^\circ)\in A\times\R^d\times\R^{d\times d}.
\end{align}

As 
 {\color{black} set of admissible controls}, we use
\begin{align*}
\mathcal{A}:=\{a:\R_+\times\Omega\to A\quad\text{$\F$-predictable}\}.
\end{align*}
Note that $\mathcal{A}$  {\color{black} equipped with the metric given by 
$(\Mean[\int_0^\infty e^{-t} \abs{a_t-\tilde{a}_t}^2\,dt])^{1/2}$, $a$, $\tilde{a}\in\mathcal{A}$,} is a Polish space (see p.~23 in \cite{ElKarouiTanII} or p.~823 in
\cite{Djete22AOP}). We also use 
\begin{align*}
\mathcal{A}^t:=\{a\in\mathcal{A}: 
\text{$a$ is $\F^t$-predictable 
  with $a\vert_{\llb 0,t\rrb}=
 a^\circ$}\},\quad t\in [0,T].
\end{align*}
Similarly, we  assume that the restriction of any $(p,q)\in\mathbb{L}^2(\F^t,\Prob,\R^d)\times\mathbb{L}^2(\F^t,\Prob,\R^{d\times d})$ to $\llb 0,t\rrb$ 
coincides with the constant process $(s,\omega)\mapsto (p^\circ,q^\circ)$.

{\color{black} We fix $T\in (0,\infty)$ 
 and a  universally measurable  
 function
 $v:[0,T]\times\Omega\to\R$
that is non-anticipating, i.e.,
 $v(t,\mathbf{x})=v(t,\mathbf{x}_{\cdot\wedge t})$.}
{\color{black}
Also set
\begin{equation}\label{E:structural}
\begin{split}
\widehat{v}(t,\mathbf{x},y)&:=v(t,\mathbf{x})-y,\quad (t,\mathbf{x},y)\in [0,T]\times\Omega\times\R,\text{ and}\\
K&:=(-\infty,0].
\end{split}
\end{equation}}

The following standing assumptions are always in force:

(A1) The function $h$ is  $\cF_T$-measurable
 and the functions $b(\cdot,\cdot,\cdot)$ and $\sigma(\cdot,\cdot,\cdot)$ are Borel measurable, they vanish after time $T$, and  they are non-anticipating, i.e.,
for every $(t,\mathbf{x},a)\in \R_+\times\Omega\times A$,
\begin{align*}
(b,\sigma)(t,\mathbf{x},a)=\bfone_{[0,T]}(t).(b,\sigma)(t,\mathbf{x}_{\cdot\wedge t},a)\quad\text{and}\quad
h(\mathbf{x})=h(\mathbf{x}_{\cdot \wedge T}).
\end{align*}

(A2) There is a constant $L_b\ge 1$ such that,
 for all $(t,\mathbf{x},\tilde{\mathbf{x}},a)\in \R_+\times\Omega\times\Omega\times A$,
 \begin{align*}
\abs{b(t,\mathbf{x},a)-b(t,\tilde{\mathbf{x}},a)}+\abs{\sigma(t,\mathbf{x},a)-\sigma(t,\tilde{\mathbf{x}},a)}&\le 
L_b\norm{\mathbf{x}_{\cdot\wedge t}-\tilde{\mathbf{x}}_{\cdot\wedge t}}_\infty\quad\text{and}\\
\abs{b(t,\mathbf{x},a)}+\abs{\sigma(t,\mathbf{x},a)}
+
\abs{h(\mathbf{x})}&\le L_b.
\end{align*}


The following hypothesis 
will also be used: 

(H) The function $v$ is bounded from below and there is a constant $L\ge 1$ such that, for all $\mathbf{x}$, $\tilde{\mathbf{x}}\in\Omega$ and $s$, $t\in\R_+$ with $s<t$,
\begin{align*}
\abs{v(t,\mathbf{x})-v(t,\tilde{\mathbf{x}})}&\le L\norm{\mathbf{x}_{\cdot\wedge t}-\tilde{\mathbf{x}}_{\cdot\wedge t}}_\infty\text{ and }\\
\abs{v(t,\mathbf{x}_{\cdot\wedge s})-v(s,\mathbf{x})}&\le L(1+\norm{\mathbf{x}_{\cdot\wedge s}}_\infty)(t-s)^{1/2}. 
\end{align*}


\section{Mean viability}\label{S:MeanViability}

\subsection{The notions}
{\color{black} Given  $a\in\mathcal{A}$ and $(t,\mathbf{x})\in\R_+\times\Omega$, consider} 
 the controlled stochastic differential  
 {\color{black} equation}
\begin{equation}\label{E:mv:CSDE}
\begin{split}
dX_s^{t,\mathbf{x},a}&=b(s,X^{t,\mathbf{x},a},a_s)\,ds+\sigma(s,X^{t,\mathbf{x},a},a_s)\,dW_s
\quad\text{on $(t,\infty)$, $\Prob$-a.s.,}\\
X^{t,\mathbf{x},a}\vert_{[0,t]}&=\mathbf{x}\vert_{[0,t]},
\end{split}
\end{equation}
{\color{black} which, under  (A1) and (A2),  has a unique (strong)  solution in $\mathbb{S}^2(\F^t,\Prob,\R^d)$ 
(this is a special case of Proposition~2.8 in \cite{CossoEtAl21McKeanVlasov}; alternatively one can
slightly modify the proof of Theorem~3.3.1 in \cite{Zhang17book} to cover path-dependent coefficients).}

 
{\color{black}  We introduce now  notions of mean viability for the pair $(\widehat{v},K)$ defined by \eqref{E:structural}
(see~ \cite{Goreac11} for  corresponding stochastic viability notions).}

\begin{definition} \label{D:MeanViab}
The pair $(\widehat{v},K)$ is \emph{mean viable} for \eqref{E:mv:CSDE} if, for every
 $(t,\mathbf{x},y)\in [0,T]\times\Omega\times\R$ {\color{black} with $v(t,\mathbf{x})\le y$,}
there  is 
an  
$a\in\mathcal{A}^t$  such that, for every $s\in [t,T]$,
\begin{align*}
{\color{black} \Mean\left[v(s,X^{t,\mathbf{x},a})\right]\le y.
}
\end{align*}
\end{definition}




\begin{definition}\label{D:WeakApproxViab} 
The
 pair $(\widehat{v},K)$ is \emph{approximately mean viable}  for \eqref{E:mv:CSDE} if, for each
 $(t,\mathbf{x},y)\in [0,T]\times\Omega\times\R$ 
 with {\color{black}  $v(t,\mathbf{x})\le y$,}  
 {\color{black} we have}
\begin{align}\label{E:WeakApproxViabDef}
{\color{black}
 \inf_{a\in\mathcal{A}^t}\sup_{s\in [t,T]} \Mean\left[v(s,X^{t,\mathbf{x},a})\right]\le y.}
\end{align}
\end{definition}
{\color{black}\begin{remark}
These notions can  be formulated for more general pairs $(\widehat{v},K)$, e.g.,  mean viability means  that, if $\widehat{v}(t,\mathbf{x},y)\in K$, then
there is an $a\in\mathcal{A}^t$ such that $\Mean\left[\widehat{v}(s,X^{t,\mathbf{x},a},y)\right]\in K$ for all $s\in [t,T]$,
but \eqref{E:structural} is crucial for  establishing our sufficient condition for mean viability (see  Remark~\ref{R:structural} for an explanation).
\end{remark}}

{\color{black}
\begin{remark}
There is a natural connection between stochastic viability and stochastic target problems (see Section~7 in \cite{ST02JEMS}).
Similarly, mean viability is related to stochastic target problems with expected loss (cf.~\cite{BouchardEtAl09}), in particular,
to superhedging  of American contingent claims subject to  expected loss constraints.
\end{remark}}

\subsection{Mean tangency} \label{S:MeanTangency}
The following 
definition
 is motivated by a deterministic counterpart 
 in \cite{Carja09TAMS}~and its stochastic
version in \cite{Goreac11}.

\begin{definition}\label{D:QTS}
Fix $(t,\mathbf{x},y)\in [0,T]\times\Omega\times\R$ {\color{black} with $v(t,\mathbf{x})\le y$}. 
A non-empty set
$\mathcal{E}\subset
\mathbb{L}^2(\F^t,\Prob,\R^d)\times\mathbb{L}^2(\F^t,\Prob,\R^{d\times d})$ is 
 \emph{mean quasi-tangent} to $(\widehat{v},K)$ at $(t,\mathbf{x},y
 )$ if, for each
 $\eps>0$, there 
 {\color{black} is} a 
$(\delta,(b,\sigma),p,q)\in 
{\color{black} (0,\eps]}
\times\mathcal{E}\times\mathbb{L}^0(\F^t,\R^d)
\times\mathbb{L}^0(\F^t,\R^{d\times d})$ 
with
\begin{align}\label{E:D:QTS:pq}
\norm{\bfone_{\llb t,t+\delta\rrb}.p)}^2_{\mathbb{L}^2(\F^t,\Prob,\R^d)}+
\norm{\bfone_{\llb t,t+\delta\rrb}.q}^2_{\mathbb{L}^2(\F^t,\Prob,\R^{d\times d})}\le \eps\delta
\end{align}
and 
 {\color{black}
 $\Mean\left[v(t+\delta,X^{t,\mathbf{x};b,\sigma;p,q})
\right]\le y+\eps\delta$}
where
    $X=X^{t,\mathbf{x};b,\sigma;p,q}$  satisfies 
\begin{equation}\label{E:D:QTS:X}
\begin{split}
X_s&=\mathbf{x}_t+\int_t^{s} [b_r+p_r]\,dr
+\int_t^{s} [\sigma_r+q_r]\,dW_r\,\,\,\text{on $(t,t+\delta]$, $\Prob$-a.s.,}\\
  X_s&=\mathbf{x}_s\quad\text{ on $[0,t]$.}
\end{split}
\end{equation}

We write $\mathcal{QTS}_{\widehat{v},K}(t,\mathbf{x},y
)$ for the \emph{class of all mean quasi-tangent sets} 
to $(\widehat{v},K)$ at $(t,\mathbf{x},y
)$.
\end{definition}

\subsection{Necessary and sufficient conditions for mean viability}
Now, we can state appropriate versions of the classical viability theorems. 
To this end, consider
\begin{equation}\label{E:E+}
\begin{split}
&\mathcal{E}_+(t,\mathbf{x}):=
\{({b},{\sigma})\in\mathbb{L}^2(\F^t,\Prob,\R^d)\times\mathbb{L}^2(\F^t,\Prob,\R^{d\times d}):\,\exists a\in\mathcal{A}^t:\\
&\qquad
 (b,\sigma)(s,\omega)=(b,\sigma)(s,\mathbf{x}_{\cdot\wedge t},a(s,\omega))
 \quad\text{$dt\times d\Prob$-a.e.~on $[t,T]\times\Omega$}\}, 
 \end{split}
 \end{equation}
 where $(t,\mathbf{x})\in [0,T]\times\Omega$
 (we write $\mathcal{E}_+$ to indicate that this set depends on the values of 
 $s\mapsto (b,\sigma)(s,\cdot,\cdot)$ from \eqref{E:Data} in a right neighborhood of $t$).

\begin{theorem}\label{T:Necc}
 Suppose that  $(\widehat{v},K)$ is mean viable or approximately mean viable  for \eqref{E:mv:CSDE}.
Then, for each $(t,\mathbf{x},y)\in [0,T)\times\Omega\times\R$ with 
{\color{black} $v(t,\mathbf{x})\le y$, }
we have
\begin{align}\label{E:Necc}
\mathcal{E}_+(t,\mathbf{x})\in  \mathcal{QTS}_{\widehat{v},K}(t,\mathbf{x},y).
\end{align}
\end{theorem}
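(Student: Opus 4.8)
The plan is to show that the specific set $\mathcal{E}_+(t,\mathbf{x})$ is mean quasi-tangent to $(\widehat v,K)$ at every point $(t,\mathbf{x},y)$ with $\widehat v(t,\mathbf{x},y)\in K$, by extracting the tangency data directly from the (possibly approximate) mean viability assumption. So fix such a point and fix $\eps>0$. By mean viability (or approximate mean viability) there is a control $a\in\mathcal{A}^t$ with $\sup_{s\in[t,T]}\inf_{k\in K}\abs{\Mean[\widehat v(s,X^{t,\mathbf{x},a},y)]-k}$ equal to zero (respectively, as small as we like, say $\le \eps^2$); in the exact case this is immediate, in the approximate case one takes an almost-minimizing $a$. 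I would then set $(b,\sigma):=(b,\sigma)(\cdot,\mathbf{x}_{\cdot\wedge t},a(\cdot))$, which by construction lies in $\mathcal{E}_+(t,\mathbf{x})$, and take the perturbations $p\equiv 0$, $q\equiv 0$, so that condition \eqref{E:D:QTS:pq} holds trivially for every $\delta$. The remaining task is to pick $\delta\in(0,\eps\wedge(T-t)]$ so that, with $X=X^{t,\mathbf{x};b,\sigma;0,0}$ solving \eqref{E:D:QTS:X}, we have $\inf_{k\in K}\abs{\Mean[\widehat v(t+\delta,X,y)]-k}\le\eps\delta$.

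The key technical point is comparing the "frozen-coefficient" process $X=X^{t,\mathbf{x};b,\sigma;0,0}$ of \eqref{E:D:QTS:X} with the true controlled solution $X^{t,\mathbf{x},a}$ of \eqref{E:mv:CSDE}. These differ because in $X$ the coefficients are evaluated along the frozen path $\mathbf{x}_{\cdot\wedge t}$ rather than along the evolving path. Using the Lipschitz bound (A2) and standard $\mathbb{L}^2$ moment estimates for SDEs (Grönwall plus BDG), one gets $\Mean[\norm{X_{\cdot\wedge(t+\delta)}-X^{t,\mathbf{x},a}_{\cdot\wedge(t+\delta)}}_\infty^2]=o(\delta)$ as $\delta\downarrow 0$ — in fact of order $\delta^2$ after accounting for the fact that on $[t,t+\delta]$ both processes have started from the same $\mathbf{x}_t$ and the discrepancy in drift/diffusion coefficients is itself $O(\delta^{1/2})$ in $\mathbb{L}^2$-sup-norm. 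If $\widehat v$ were Lipschitz in the path variable (which is not assumed in general for $\widehat v$, only for $v$ in hypothesis (H)), this would immediately give $\abs{\Mean[\widehat v(t+\delta,X,y)]-\Mean[\widehat v(t+\delta,X^{t,\mathbf{x},a},y)]}\le C\delta$, and combined with the viability estimate $\inf_k\abs{\Mean[\widehat v(t+\delta,X^{t,\mathbf{x},a},y)]-k}\le \eps^2$ (approx case) or $=0$ (exact case), we could choose $\delta$ small enough that the total is $\le\eps\delta$, completing the proof.

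The main obstacle is precisely that $\widehat v$ is only assumed universally measurable and non-anticipating, with no continuity. I expect this theorem is actually applied with $\widehat v$ built from the Lipschitz function $v$ of (H) (e.g. $\widehat v(t,\mathbf{x},y)=v(t,\mathbf{x})-y$ or similar), so that the needed regularity is available; alternatively, the statement may implicitly restrict to the case where the relevant continuity of $\widehat v$ along solution paths holds. I would therefore carry out the estimate under the regularity that makes it go through, flagging that the comparison of $X$ with $X^{t,\mathbf{x},a}$ plus passage through $\widehat v$ is the only real content — everything else (membership of $(b,\sigma)$ in $\mathcal{E}_+(t,\mathbf{x})$, vanishing of the $(p,q)$-norm, the choice of $a$) is bookkeeping. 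In the approximate-viability case one additionally needs to be slightly careful about the order of quantifiers: the $a$ is chosen first to make the $\sup_s$ small, then $\delta$ is chosen depending on that $a$ via the moment estimate, which is fine because the moment estimate's constant depends only on $L_b$, $T$, and $d$, not on $a$.
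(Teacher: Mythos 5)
There is a genuine gap, and it is precisely at the point you flag as the obstacle. Setting $p\equiv 0$, $q\equiv 0$ is the wrong move: the entire purpose of allowing $(p,q)$ in Definition~\ref{D:QTS} is to absorb the discrepancy between the frozen coefficients $(b,\sigma)(\cdot,\mathbf{x}_{\cdot\wedge t},a_\cdot)$ and the true coefficients $(b,\sigma)(\cdot,X^{t,\mathbf{x},a},a_\cdot)$, so that no regularity of $\widehat v$ is needed. The paper's proof takes $(b,\sigma)_s := (b,\sigma)(s,\mathbf{x}_{\cdot\wedge t},a_s)\in\mathcal{E}_+(t,\mathbf{x})$ and then \emph{defines} $p_s := b(s,X^{t,\mathbf{x},a},a_s)-b_s$ and $q_s := \sigma(s,X^{t,\mathbf{x},a},a_s)-\sigma_s$. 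With this choice, \eqref{E:D:QTS:X} is satisfied by $X = X^{t,\mathbf{x},a}$ \emph{exactly}, so the viability bound transfers without any appeal to continuity of $\widehat v$; and the norm bound \eqref{E:D:QTS:pq} follows from the Lipschitz estimate in (A2) together with $\Mean[\norm{X^{t,\mathbf{x},a}_{\cdot\wedge(t+\delta)}-\mathbf{x}_{\cdot\wedge t}}_\infty^2]\le CL_b^2\delta$, giving $\norm{\bfone_{\llb t,t+\delta\rrb}.p}^2_{\mathbb{L}^2}+\norm{\bfone_{\llb t,t+\delta\rrb}.q}^2_{\mathbb{L}^2}\le 2CL_b^4\delta^2$, which is $\le\eps\delta$ once $\delta\le (2CL_b^4)^{-1}\eps$.

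Your approach would not be rescued even if $\widehat v$ were Lipschitz. The $\mathbb{L}^2$-sup difference between $X^{t,\mathbf{x};b,\sigma;0,0}$ and $X^{t,\mathbf{x},a}$ over $[t,t+\delta]$ is $O(\delta)$ with a constant determined by $L_b$ and the BDG constant; after applying the Lipschitz bound on $\widehat v$ this yields an error term $C'\delta$ with $C'$ independent of $\delta$ and $\eps$. You need this to be $\le\eps\delta$, i.e.\ $C'\le\eps$, which fails for small $\eps$. Shrinking $\delta$ does not help because both sides scale linearly. There is also a quantifier slip in the approximate case: you fix $a$ first so that the $\sup_s$ is $\le\eps^2$, but the target bound is $\eps\delta$ with $\delta\le\eps$, so $\eps^2\ge\eps\delta$ is the wrong direction. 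The paper first fixes $\delta\in(0,\eps\wedge(T-t)]$ and then picks $a$ so that $\inf_{k\in K}\abs{\Mean[\widehat v(t+\delta,X^{t,\mathbf{x},a},y)]-k}\le\eps\delta$, which is possible because the infimum over $a$ in \eqref{E:WeakApproxViabDef} is zero. Once you realize that $(p,q)$ is designed to make $X=X^{t,\mathbf{x},a}$ identically, the whole comparison step (and the need for any regularity of $\widehat v$) disappears.
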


\begin{proof}
We 
consider  the case that 
$(\widehat{v},K)$ is approximately mean viable for \eqref{E:mv:CSDE}.
Fix $\eps>0$, $(t,\mathbf{x},y)\in [0,T)\times\Omega\times\R$ with 
{\color{black} $v(t,\mathbf{x})\le y$, and}  $\delta\in(0,\eps\wedge (T-t)]$.
 {\color{black} Then there is an $a\in\mathcal{A}^t$} such
that 
{\color{black} $\Mean\left[v(t+\delta,X^{t,\mathbf{x},a})\right]\le y+\eps\delta$.}
Note that, by (A2) and standard estimates (see, e.g., Section~3.2 in \cite{Zhang17book}),
\begin{align}\label{E:Necc:Proof}
\Mean\left[\norm{X^{t,\mathbf{x},a}_{\cdot\wedge(t+\delta)}-\mathbf{x}_{\cdot\wedge t}}_\infty^2\right]\le CL_b^2\,\delta
\end{align}
for some $C>0$ independent of $\delta$ and $a$.
Next, we need to find 
 $(b,\sigma)\in\mathcal{E}_+(t,\mathbf{x})$ and 
 $(p,q)\in{\color{black}\mathbb{L}^0(\F^t,\R^d)\times\mathbb{L}^0(\F^t,\R^{d\times d})}$
such that \eqref{E:D:QTS:pq}  holds 
 and
 \begin{equation}\label{E00:VS:existence}
 b(s,X^{t,\mathbf{x},a},a_s)=b_s+p_s\quad\text{and}\quad  \sigma(s,X^{t,\mathbf{x},a},a_s)=\sigma_s+q_s.
 \end{equation}
 We take $(b,\sigma)$ defined by $(b,\sigma)(s,\mathbf{x}_{\cdot\wedge t},a_s)$ and $(p,q)$ implicitly defined
 by \eqref{E00:VS:existence}. 
 Then 
  {\color{black}
 $\Mean\left[v(t+\delta,X)
\right]\le y+\eps\delta$}
with $X$ defined by  \eqref{E:D:QTS:X} and,
 from (A2) and \eqref{E:Necc:Proof}, we can deduce that \eqref{E:D:QTS:pq}  holds
 if $\delta\le (2CL_b^4)^{-1}\eps$, i.e., we have \eqref{E:Necc}.
\end{proof}

 
 {\color{black}
 \begin{remark}\label{R:structural}
 We 
 need 
 the pair $(\widehat{v},K)$ to satisfy 
 \eqref{E:structural} 
 for proving the following main result. The reason  
  is that, in general, mean viability
does not propagate in time, i.e.,  
$X_t\in K\Rightarrow \Mean[X_{t+\delta}]\in K$ 
and
$X_{t+\delta}\in K\Rightarrow \Mean[X_{t+2\delta}]\in K$
do
not, in general, imply
$ X_t\in K\Rightarrow  
\Mean[X_{t+2\delta}]\in K$.
 This is in contrast to  (stochastic) viability, where 
 $X_t\in K\Rightarrow X_{t+\delta}\in K$
 and
 $X_{t+\delta}\in K\Rightarrow X_{t+2\delta}\in K$
 imply
 $X_t\in K\Rightarrow  X_{t+2\delta}\in K$.
The requirement \eqref{E:structural} circumvents this issue explicitly  in
 \eqref{Es:FinalInequality}.
 \end{remark}
 }


\begin{theorem}\label{T:Sufficient}
{\color{black} Let 
hypothesis~(H) hold.} 
 If, 
for each $(t,\mathbf{x},y)\in [0,T)\times\Omega\times\R$
with 
{\color{black} $v(t,\mathbf{x})\le y$, }
we have
$\mathcal{E}_+(t,\mathbf{x})\in
\mathcal{QTS}_{\widehat{v},K}(t,\mathbf{x},y)$,
then  $(\widehat{v},K)$ is approximately mean viable for \eqref{E:mv:CSDE}. 
\end{theorem}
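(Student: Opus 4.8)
The plan is to construct, for a fixed starting point $(t,\mathbf{x},y)$ with $\widehat v(t,\mathbf{x},y)\le 0$ and a fixed tolerance level, a control $a\in\mathcal A^t$ for which $\sup_{s\in[t,T]}\bigl(\Mean[\widehat v(s,X^{t,\mathbf x,a},y)]\bigr)^+$ is as small as we like; this is exactly \eqref{E:WeakApproxViabDef} since $K=(-\infty,0]$ and $\inf_{k\in K}|z-k|=z^+$. The construction is the usual Euler-type stepping scheme from Nagumo-type viability proofs: partition $[t,T]$ into small steps of mesh $\delta$ (to be related to $\eps$), and at each node $(t_i,X_{t_i})$ use the hypothesis $\mathcal E_+(t_i,\cdot)\in\mathcal{QTS}_{\widehat v,K}(t_i,\cdot,\cdot)$ — applied with an appropriately shrunk $\eps$ — to produce a drift/diffusion pair $(b,\sigma)\in\mathcal E_+$, hence a genuine control on $[t_i,t_{i+1}]$ via the definition of $\mathcal E_+$ in \eqref{E:E+}, together with a small perturbation $(p,q)$ satisfying the $\mathbb L^2$-smallness bound \eqref{E:D:QTS:pq} and the one-step approximate-viability estimate $\inf_{k\in K}|\Mean[\widehat v(t_{i+1},X,y)]-k|\le\eps\delta$. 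Concatenating these controls over all subintervals gives a single $a\in\mathcal A^t$, and concatenating the solutions of \eqref{E:D:QTS:X} gives a process $X$ that differs from the true solution $X^{t,\mathbf x,a}$ of \eqref{E:mv:CSDE} only through the perturbations $(p,q)$, whose cumulative $\mathbb L^2$-contribution is controlled.

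\textbf{Key steps, in order.}
First, reduce to controlling the scalar function $\varphi(s):=\Mean[\widehat v(s,X^{t,\mathbf x,a},y)]=\Mean[v(s,X^{t,\mathbf x,a})]-y$ using Assumption~\ref{A:structural}; by hypothesis~(H), $v$ is Lipschitz in $\mathbf x$ and $\tfrac12$-Hölder in time with the stated linear growth, so $\varphi$ is itself Hölder-continuous along solutions (with a modulus depending on $L$, $L_b$ via the moment bound \eqref{E:Necc:Proof}). Second, fix $\eps>0$ and choose a step size $\delta$ and the per-step tangency tolerance $\teps$ so that (a) the one-step errors $\teps\delta$ sum to at most $\eps/2$ over $[t,T]$, (b) the perturbations' total $\mathbb L^2$-mass $\sum_i\teps\delta\le$ something that, after a Gronwall/BDG estimate comparing $X$ and $X^{t,\mathbf x,a}$ on $[t,T]$, translates via the Lipschitz bound on $v$ into at most $\eps/4$, and (c) the Hölder-in-time modulus of $\varphi$ over a single step of length $\delta$ is at most $\eps/4$, so that the value cannot overshoot between nodes. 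Third, run the induction: assuming $\varphi(t_i)\le 0$ (more precisely $\widehat v(t_i,X_{t_i},y)\le 0$ pathwise, or in the mean, depending on how the tangency condition is invoked — this is the delicate bookkeeping point, see below), apply $\mathcal{QTS}$ at node $i$ to get $\Mean[\widehat v(t_{i+1},X_{t_{i+1}},y)]\le \teps\delta$; fourth, after the last step, combine the accumulated one-step errors, the $\mathbb L^2$-perturbation error, and the time-continuity of $\varphi$ to bound $\sup_{s\in[t,T]}\varphi(s)^+$ by $\eps$. Fifth, since $a$ was built to lie in $\mathcal A^t$ and the bound holds for every $\eps$, \eqref{E:WeakApproxViabDef} follows.

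\textbf{Main obstacle.}
The hard part will be exactly the ``mean viability does not propagate in time'' issue flagged before Assumption~\ref{A:structural}: the tangency condition at node $i$ is stated for starting points with $\widehat v(t_i,\mathbf x,y)\in K$, but after one step we only know $\Mean[\widehat v(t_{i+1},X_{t_{i+1}},y)]\le\teps\delta$, not that $\widehat v(t_{i+1},X_{t_{i+1}},y)\le 0$ on a set of full measure, so we cannot naively re-apply the hypothesis at the next node with the random starting point $X_{t_{i+1}}$. Resolving this is presumably where the structural form $\widehat v=v-y$ and the material of Section~\ref{Section:Step3} come in: one regards $y$ as a movable parameter and, at each node, shifts it to absorb the accumulated mean-value (i.e. replaces $y$ by $y_i$ chosen so that the relevant quantity is $\le 0$ in the mean, at the cost of tracking how $y_i$ drifts), and then one needs a measurable-selection argument to turn the pointwise-in-$(t_{i+1},\mathbf x)$ existence of a tangent control into a single $\F^{t_i}$-predictable control defined $\om$-by-$\om$ — this is the ``technical material'' the excerpt says is deferred. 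I would isolate that selection-plus-parameter-shift lemma, cite it from Section~\ref{Section:Step3}, and treat everything else (the Euler scheme, Gronwall, BDG, summation of errors) as the routine skeleton built around it. The inequality \eqref{Es:FinalInequality} referenced in the text is, I expect, precisely the place where the parameter shift prevents the non-propagation from blowing up, so the proof will hinge on quoting it correctly.
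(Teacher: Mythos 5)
Your scheme cannot be run as stated, and the obstruction is more basic than the ``mean viability does not propagate'' issue you flag. In Definition~\ref{D:QTS}, the step size $\delta$ is not a free parameter: for each tolerance $\eps$ the condition only asserts that \emph{some} $\delta\in(0,\eps\wedge(T-t)]$ with the required $(b,\sigma,p,q)$ exists. You therefore cannot lay down a uniform mesh of size $\delta$ in advance and invoke $\mathcal{QTS}$ node by node; the tangency condition dictates the step lengths, they may be arbitrarily small, and a sequence of such steps may fail to reach $T$ (the Zeno phenomenon). This is precisely why the paper does not use an Euler scheme at all. Instead it builds an $\eps$-approximate solution (Definition~\ref{D:Approx}, a sextuple $(\tau,\varrho,a,p,q,X)$ with a lagged time $\varrho$ and a stopping time $\tau$), applies the Brezis--Browder ordering principle to obtain a maximal such solution, and then proves an extension lemma (Lemma~\ref{L:Step3}) showing that a maximal solution with $\Prob(\tau<T)>0$ can always be extended, a contradiction. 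The Gronwall/BDG comparison of $X$ with $X^{t,\mathbf{x},a}$ that you sketch at the end is indeed essentially the final step of the paper's proof of the theorem, once Proposition~\ref{P:epsApproxSol} delivers a solution with $\tau=T$.

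Your diagnosis of the propagation difficulty is correct, but the proposed remedy (a deterministic shift $y\mapsto y_i$) is not what makes the argument close. The resolution is to invoke the tangency hypothesis at the \emph{random} current point with the $y$-parameter replaced by $v$ evaluated there: since $\widehat v(t,\mathbf{x},v(t,\mathbf{x}))=0\in K$ always by Assumption~\ref{A:structural}, the hypothesis is applicable at $(\tau^\eps(\omega),X^\eps(\omega),v(\tau^\eps(\omega),X^\eps(\omega)))$ unconditionally. Lemma~\ref{L:QTSinQTS} then transfers the conclusion to the original $y$, and condition~(vi) of Definition~\ref{D:Approx} is precisely the telescoped bookkeeping (with error growing like $\eps\cdot\Mean[s\wedge\tau-t]$) that replaces ``$\widehat v$ stays in $K$'' by ``$\Mean[\widehat v]$ drifts at rate at most $\eps$.'' The display \eqref{Es:FinalInequality} is where the one-step gains are glued via the r.c.p.d.\ decomposition; it is not a parameter-shift lemma but the concatenation inequality. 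You do correctly defer the measurable selection (turning the pointwise existence from $\mathcal{QTS}$ into an $\cF_{\tau}$-measurable choice of $(\delta,\tilde{\Prob})$) to Section~\ref{Section:Step3}, which is where the genuine technical weight of the proof sits.
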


The proof of this theorem is deferred to the end of Section~\ref{S:Approx}.

\begin{remark} \label{R:Sufficient:Weaker}
In Theorem~\ref{T:Sufficient}, one can drop the time regularity assumption in hypothesis (H). 
But then the conclusion becomes  weaker. Instead of approximate mean viability we would have the property
that, for each
 $(t,\mathbf{x},y)\in [0,T]\times\Omega\times\R$ 
 with  {\color{black} $v(t,\mathbf{x})\le y$} 
 and for each $s\in [t,T]$,
 {\color{black} we have
 $\inf_{a\in\mathcal{A}^t} \Mean\left[v(s,X^{t,\mathbf{x},a}) 
\right]\le y$.}
\end{remark}

\subsection{Approximate solutions}\label{S:Approx}
We introduce now  approximate solutions of \eqref{E:mv:CSDE}.
Existence of those solutions is crucial for the proof of Theorem~\ref{T:Sufficient}.
Our definition is an appropriate adaption of Definition~3 in
\cite{Goreac11} to our setting and the next proof follows the structure of the proof of Lemma~1
in \cite{Goreac11}. However, details are different and relatively heavy technical arguments are needed (see especially Section~6).

\begin{definition}\label{D:Approx}
Fix $\eps>0$ and $(t,\mathbf{x},y)\in [0,T)\times\Omega\times\R$.
An \emph{$\eps$-approximate solution 
 of \eqref{E:mv:CSDE} for $(\widehat{v},K)$ starting at $(t,\mathbf{x},y)$}   
is a sextuple  $(\tau,\varrho,a,p,q,X)$ such that the following holds:

 (i) $\tau$ is an $\F$-stopping time with $t<\tau\le T$,
 
(ii) $\varrho=\varrho(s,\omega):\R_+\times\Omega\to\R_+$
is non-decreasing  and c\`adl\`ag in $s$, $\F$-adapted,
and  we have $s-\eps\le \varrho_s\le s$ on $\llb 0,\tau\llb$
and $\varrho_s=s$ on $\llb \tau,\infty\llb$, 

(iii) $(a,p,q,X)\in\mathcal{A}^t\times \mathbb{L}^0(\F^t,\R^d)\times\mathbb{L}^0(\F^t,\R^{d\times d})\times
\mathbb{S}^2(\F,\Prob,\R^d)$ such that
\begin{align}\label{E:tptq:eps}
\norm{\bfone_{\llb t,\tau\rrb}.p)}^2_{\mathbb{L}^2(\F^t,\Prob,\R^d)}+
\norm{\bfone_{\llb t,\tau\rrb}.q}^2_{\mathbb{L}^2(\F^t,\Prob,\R^{d\times d})}\le \eps\cdot\Mean[\tau-t],
\end{align}

 (iv) $(\bfone_{\llb t,\tau\rrb}(s,W).(b,\sigma)(s,X_{\cdot\wedge\varrho_s},a_s))_{s\ge 0}\in 
 \mathbb{L}^2(\F,\Prob,\R^d)\times\mathbb{L}^2(\F,\Prob,\R^{d\times d})$,
 
 (v)  $\Prob$-a.s.
for every $s\ge t$, 
\begin{equation*}
\begin{split}
X_{s\wedge\tau}&=\mathbf{x}_t+\int_t^{s\wedge\tau}\left[b(r,X_{\cdot\wedge\varrho_r}
,a_r)+p_r\right]\,dr
+\int_t^{s\wedge\tau}\left[\sigma(r,X_{\cdot\wedge\varrho_r},a_r)+q_r\right]\,dW_r,\\
X\vert_{[0,t]}&=\mathbf{x}\vert_{[0,t]},
\end{split}
\end{equation*}

 (vi)  
 {\color{black}
 $\Mean\left[v(\varrho_s\wedge\tau,X)\right]\le y+ \eps\cdot \Mean[s\wedge\tau-t]$ for all $s\in [t,T]$.}
 \end{definition}

\begin{proposition}\label{P:epsApproxSol}
Let $v$ be l.s.c.~and bounded from below 
and $T_1\in (0,T]$.
{\color{black} If,}
for each $(t,\mathbf{x},y)\in [0,T_1)\times\Omega\times\R$ with 
{\color{black} $v(t,\mathbf{x})\le y$,}
we have
$\mathcal{E}_+(t,\mathbf{x})\in
\mathcal{QTS}_{\widehat{v},K}(t,\mathbf{x},y)$,
{\color{black} then,}
for each $\eps>0$ and $(t,\mathbf{x},y)\in [0,T_1)\times\Omega\times\R$  with 
{\color{black} $v(t,\mathbf{x})\le y$,}
there is 
 an $\eps$-appro\-xi\-mate so\-lu\-tion
$(\tau,\varrho,a,p,q,X)$ of  \eqref{E:mv:CSDE} for $(\widehat{v},K)$  starting at $(t,\mathbf{x},y)$
 such that $\tau= T_1$, $\Prob$-a.s.
\end{proposition}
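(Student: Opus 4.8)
The strategy is a Zorn's-lemma / maximality argument, in the spirit of the classical proof of Nagumo's viability theorem and of Lemma~1 in \cite{Goreac11}. Fix $\eps>0$ and $(t,\mathbf{x},y)\in[0,T_1)\times\Omega\times\R$ with $\widehat{v}(t,\mathbf{x},y)\in K$. Consider the collection $\mathcal{S}$ of all $\eps$-approximate solutions $(\tau,\varrho,a,p,q,X)$ of \eqref{E:mv:CSDE} for $(\widehat{v},K)$ starting at $(t,\mathbf{x},y)$ with $\tau\le T_1$; it is non-empty because the mean quasi-tangency hypothesis, applied at $(t,\mathbf{x},y)$ with the parameter $\eps$, produces a $\delta\in(0,\eps\wedge(T_1-t)]$, a pair $(b,\sigma)\in\mathcal{E}_+(t,\mathbf{x})$ (hence an associated control $a\in\mathcal{A}^t$), and perturbations $(p,q)$ satisfying \eqref{E:D:QTS:pq} and the approximate-viability estimate; setting $\tau\equiv t+\delta$ and $\varrho_s=s$ (so the path-dependence is frozen at its endpoint value, which is harmless on such a short interval up to the errors we already allow, using (A2) and the estimate \eqref{E:Necc:Proof}) gives an initial element of $\mathcal{S}$. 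Introduce on $\mathcal{S}$ the partial order: $(\tau^1,\ldots)\preceq(\tau^2,\ldots)$ if $\tau^1\le\tau^2$ $\Prob$-a.s.\ and the second sextuple restricted to $\llb 0,\tau^1\rrb$ agrees with the first (the stopped processes coincide, $\varrho$, $a$, $p$, $q$ agree on $\llb 0,\tau^1\rrb$). The goal is to show a maximal element has $\tau=T_1$ a.s.

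\textbf{Chaining and the inductive/limit step.}
Two things must be checked for Zorn's lemma. First, \emph{concatenation}: given one $\eps$-approximate solution up to a stopping time $\tau^1<T_1$, one extends it by applying the mean quasi-tangency hypothesis at (a.e.) the endpoint $(\tau^1,X_{\cdot\wedge\tau^1},y')$ where $y'$ is chosen so that $\widehat{v}(\tau^1,X_{\cdot\wedge\tau^1},y')\in K$ — concretely, using Assumption~\ref{A:structural}, $\widehat{v}=v(t,\mathbf{x})-y$, so the relevant quantity is $\Mean[v(\tau^1,X)]-y\le\eps\,\Mean[\tau^1-t]$ from (vi), and one absorbs this error into the new starting height. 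Because $\mathcal{E}_+$ and $\mathcal{QTS}_{\widehat{v},K}$ are defined in terms of the data $(b,\sigma)$ in a right-neighbourhood, one splices the new control, the new $(b,\sigma)$-driven dynamics with perturbation $(p,q)$, and a new $\varrho$ that restarts its $\eps$-delay, obtaining a strictly larger element of $\mathcal{S}$ whose estimates \eqref{E:tptq:eps} and (vi) hold with the \emph{cumulative} expectations $\Mean[\tau-t]$, $\Mean[s\wedge\tau-t]$ on the right — this is exactly where the additive/linear-in-time structure of the error bounds is used, and where the structural Assumption~\ref{A:structural} is essential (so that the errors at successive steps add rather than compound; cf.\ the discussion preceding \eqref{Es:FinalInequality}). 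Second, \emph{chains have upper bounds}: given a totally ordered chain, let $\tau^*=\sup$ of the stopping times; define $X$, $\varrho$, $a$, $p$, $q$ on $\llb 0,\tau^*\rrb$ as the common extension, set $\tau=\tau^*$. One must verify that the limiting object is again an $\eps$-approximate solution: the $\mathbb{S}^2$/$\mathbb{L}^2$ bounds survive because the right-hand sides of \eqref{E:tptq:eps} and (iv) are controlled uniformly by $\eps\,\Mean[T_1-t]$ and by (A2); right-continuity of $\varrho$ and the a.s.-continuity of $X$ pass to the limit since $X_{\cdot\wedge\tau^n}\to X_{\cdot\wedge\tau^*}$ uniformly on compacts along the chain; the integral equation in (v) is stable under this monotone limit by dominated convergence for the drift and an Itô-isometry / BDG argument for the stochastic integral; finally (vi) at $s$ follows by taking $s\wedge\tau^n\uparrow s\wedge\tau^*$ together with lower semicontinuity of $v$ and Fatou — here lower semicontinuity of $v$ and boundedness from below are used precisely to pass to the limit inside $\Mean[\widehat{v}(\varrho_s\wedge\tau,X,y)]=\Mean[v(\varrho_s\wedge\tau,X)]-y$ and to keep the quantity controlled by $\eps\,\Mean[s\wedge\tau^*-t]$.

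\textbf{Conclusion and the main obstacle.}
By Zorn's lemma $\mathcal{S}$ has a maximal element $(\bar\tau,\bar\varrho,\bar a,\bar p,\bar q,\bar X)$. If $\Prob(\bar\tau<T_1)>0$, then on that event $\widehat{v}(\bar\tau,\bar X_{\cdot\wedge\bar\tau},\bar y)\in K$ (after the height adjustment above), and the concatenation step strictly enlarges $\bar\tau$ on a set of positive probability — contradicting maximality. Hence $\bar\tau=T_1$ $\Prob$-a.s., which is the assertion; the reference to Section~\ref{Section:Step3} supplies the measurable-selection machinery needed to turn the pointwise-in-$(t,\mathbf{x},y)$ mean quasi-tangency hypothesis into an adapted family of controls and perturbations along the construction. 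The delicate point — and the step I expect to be the main obstacle — is the bookkeeping of errors across infinitely many concatenations so that \eqref{E:tptq:eps} and (vi) remain linear in $\Mean[\tau-t]$ with the \emph{same} constant $\eps$ in the limit (rather than degrading), which is what forces the use of the c\`adl\`ag delay process $\varrho$ and the structural identity $\widehat{v}(t,\mathbf{x},y)=v(t,\mathbf{x})-y$; second to that is the measurability/selection issue of choosing, $\omega$-by-$\omega$ at the random time $\bar\tau$, the quasi-tangent data in an $\F$-predictable way, which is deferred to Section~\ref{Section:Step3}.
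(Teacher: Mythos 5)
Your proposal follows the same overall architecture as the paper's proof — an initial local existence step, a maximality argument over a partially (pre-)ordered set of $\eps$-approximate solutions, and a contradiction argument using an extension/concatenation lemma whose measurable-selection content is deferred to Section~\ref{Section:Step3}. The substance is right; two technical points deserve correction.

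First, you invoke Zorn's lemma, whereas the paper invokes the Brezis--Browder ordering principle. This is not a cosmetic difference: Zorn requires that \emph{every} (possibly uncountable) chain have an upper bound, while Brezis--Browder (applied with the increasing functional $\mathcal{N}(\mathfrak{s})=\Mean[\tau]\in[t,T]$) only requires upper bounds for increasing \emph{sequences}. Your upper-bound construction — $\tau^{(n)}\uparrow\bar\tau$, $\mathbb{L}^2$-Cauchy limit of $X^{(n)}_{\tau^{(n)}}$, dominated convergence in the SDE, Fatou and lower semicontinuity of $v$ in (vi) — is intrinsically a sequence argument. To use Zorn as stated you would need an extra step, e.g., extracting a countable cofinal subchain (possible since $\Mean[\tau]$ is bounded and monotone along the chain), whereas Brezis--Browder sidesteps this entirely. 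The paper's choice is the cleaner one.

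Second, in the initial step you set $\tau\equiv t+\delta$ and $\varrho_s=s$, then add a parenthetical about "freezing the path-dependence." These two statements contradict each other: with $\varrho_s=s$ nothing is frozen, and condition (v) of Definition~\ref{D:Approx} then reads $dX_{s\wedge\tau}=[b(r,X_{\cdot\wedge r},a_r)+p_r]\,dr+\cdots$, which does \emph{not} coincide with the dynamics \eqref{E:D:QTS:X} produced by quasi-tangency (there $(b,\sigma)$ are evaluated at the frozen path $\mathbf{x}_{\cdot\wedge t}$). The paper sets $\varrho_s:=t$ on $[t,t+\delta)$ precisely so that $X_{\cdot\wedge\varrho_r}=\mathbf{x}_{\cdot\wedge t}$ and (v) matches \eqref{E:D:QTS:X} exactly. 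Your remark that the discrepancy can be absorbed into $(p,q)$ using (A2) and \eqref{E:Necc:Proof} is true in spirit — the correction is $O(\delta^{1/2})$ in $\mathbb{L}^2$ and hence below $\eps\delta$ for $\delta$ small — but this requires re-verifying \eqref{E:tptq:eps} with the modified perturbation and introduces an unnecessary complication that the paper's choice of $\varrho$ avoids. You should either define $\varrho$ as the paper does or carry out the absorption argument explicitly.

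Everything else — the role of Assumption~\ref{A:structural} in making the per-step errors add linearly in $\Mean[\tau-t]$ rather than compound, the use of $v$ l.s.c.~and bounded below to pass to the limit in (vi), and the deferral of the measurable selection at the random boundary time to the extension lemma — is correctly identified and matches the paper.
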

\begin{proof}
Fix $\eps>0$ and $(t,\mathbf{x},y)\in [0,T)\times\Omega\times\R$ with $y\ge v(t,\mathbf{x})$.
 We consider only the case $T_1=T$.

\textit{Step~1} (local existence for some $\tau$). 
By Definition~\ref{D:QTS} and \eqref{E:E+}, the relation
$\mathcal{E}_+(t,\mathbf{x})\in\mathcal{QTS}_{\widehat{v},K}(t,\mathbf{x},y)$ 
 yields the existence of a quintuple 
\begin{align*}
(\delta,{X},{a},{p},{q})\in 
 (0,\eps\wedge (T-t)]\times\mathbb{S}^2(\F,\Prob,\R^d)\times
\mathcal{A}^t\times \mathbb{L}^0(\F^t,\R^d)\times\mathbb{L}^0(\F^t,\R^{d\times d})
\end{align*}
such that \eqref{E:D:QTS:pq}, \eqref{E:D:QTS:X} with $(b,\sigma)\in\mathcal{E}_+(t,\mathbf{x})$,
and {\color{black} $\Mean[v(t+\delta,X)]\le\ y+\eps\delta$}
hold. 
Define $\varrho:\R_+\times\Omega\to\R_+$  by
$\varrho_s:=s.\bfone_{[0,t)\cup [t+\delta,\infty)}(s)+t.\bfone_{[t,t+\delta)}(s)$
and $\tau:=t+\delta$. Then
$(\tau,\varrho,a,p,q,X)$
is an
 $\eps$-approximate solution
 of \eqref{E:mv:CSDE} for $(\widehat{v},K)$ starting at $(t,\mathbf{x},y)$.

\textit{Step~2} (existence for a maximal $\tau$). 
Denote by $\mathcal{S}$ the set of all $\eps$-approximate solutions of \eqref{E:mv:CSDE}
  for $(\widehat{v},K)$ starting at $(t,\mathbf{x},y)$.
We equip $\mathcal{S}$ with a preorder 
 $\preceq$ 
  defined as follows: For two elements 
  $\mathfrak{s}^{\text{($i$)}}=(\tau^{(i)},\varrho^{(i)},a^{(i)},p^{(i)},q^{(i)},X^{(i)})$,
  $i=1$, $2$, in
  $\mathcal{S}$, we have $\mathfrak{s}^{\text{($1$)}}\preceq \mathfrak{s}^{\text{($2$)}}$
if  $\tau^{\text{($1$)}}\le\tau^{\text{($2$)}}$ 
and, outside of some $\Prob$-evanescent set, 
$(\varrho^{(2)},a^{(2)},p^{(2)},q^{(2)})
(\cdot\wedge\tau^{\text{($1$)}})
=(\varrho^{(1)},a^{(1)},p^{(1)},q^{(1)})
(\cdot\wedge\tau^{\text{($1$)}})$.
Also, define a function 
\begin{align*}
\mathcal{N}:\mathcal{S}\to [t,T],\quad  \mathfrak{s}=(\tau,\varrho,a,p,q,X)\mapsto \mathcal{N}(\mathfrak{s}):=\Mean[\tau].
\end{align*}

We will invoke the Brezis-Browder ordering principle (see, e.g., Theorem~2.1.1 in 
\cite{CarjaVrabie05Handbook}),
which will provide us with an $\mathcal{N}$-maximal element $\mathfrak{s}_0$ of $\mathcal{S}$, i.e., 
$\mathfrak{s}\in\mathcal{S}$ and $\mathfrak{s}_0\preceq\mathfrak{s}$  imply
$\mathcal{N}(\mathfrak{s}_0)=\mathcal{N}(\mathfrak{s})$. 
Since $\mathcal{N}$ is increasing, 
it suffices to verify that
each increasing sequence in $\mathcal{S}$ has an upper bound in $\mathcal{S}$. 
To this end,
fix an increasing sequence 
$(\mathfrak{s}^{\text{($n$)}})_{n\ge 1}=
(\tau^{(n)},\varrho^{(n)},a^{(n)},p^{(n)},q^{(n)},X^{(n)})_{n\ge 1}$
 in $\mathcal{S}$. Put 
  $\bar{\tau}:=\sup_{n\ge 1} \tau^{\text{($n$)}}$. 
 Recall $(a^\circ,p^\circ,q^\circ)$ from \eqref{E:GlobalTriple}. Define
$(\bar{a},\bar{p},\bar{q})\in
\mathcal{A}^t\times\mathbb{L}^0(\F^t,\R^d)\times\mathbb{L}^0(\F^t,\R^{d\times d})$ by
\begin{align}\label{E:Step2Limit}
(\bar{a},\bar{p},\bar{q})(s,\omega):=
\begin{cases}
\lim_{n\to \infty} 
(\tilde{a}^{\text{($n$)}},\tilde{p}^{\text{($n$)}},\tilde{q}^{\text{($n$)}})(s,\omega)&\text{if this limit exists,}\\
(a^\circ,p^\circ,q^\circ) &\text{otherwise,}
\end{cases}
\end{align}
where
\begin{align*}
(\tilde{a}^{\text{($n$)}},\tilde{p}^{\text{($n$)}},\tilde{q}^{\text{($n$)}})(s,\omega)&:=
\begin{cases}
(a^{\text{($n$)}},{p}^{\text{($n$)}},{q}^{\text{($n$)}})(s,\omega)&\text{if $s\in
 [0,\tau^{\text{($n$)}}(\omega)]$,}\\
(a^\circ,p^\circ,q^\circ) &\text{otherwise}.
\end{cases}
\end{align*}
Note that, outside of a $\Prob$-evanescent set, the limit in \eqref{E:Step2Limit} always exists, as it is the limit of an eventually constant sequence 
(regarding $\mathrm{Prog}(\F^t)$--$\mathcal{B}(A)\otimes\mathcal{B}(\R^d)\otimes
\mathcal{B}(\R^{d\times d})$ measurability, see, e.g., Lemma~4.29 in \cite{Hitchhiker}).
Further note that \eqref{E:tptq:eps} holds for $(\bar{\tau},\bar{p},\bar{q})$.
Next, we show that $(X^{\text{($n$)}}_{
\tau^{\text{($n$)}}
})_{n\ge 1}$ is a Cauchy sequence in
$\mathbb{L}^2(\cF_{\bar{\tau}},\Prob{\color{black}, \R^d})$. To this end, let $m\ge n\ge 1$. Then
\begin{align*}
&\Mean\left[\abs{
X^{\text{($m$)}}_{
\tau^{\text{($m$)}}
}-X^{\text{($n$)}}_{
\tau^{\text{($n$)}}
}
}^2\right]
\le C \Mean\left[\left(
\int_{\tau^{\text{($n$)}}}^{\tau^{\text{($m$)}}} \abs{b\left(s,X^{(m)}_{\cdot\wedge\varrho^{(m)}_s},\bar{a}_s
\right)+\bar{p}_s}\,ds\right)^2 
\right] \\&\qquad+
C \Mean\left[
\int_{\tau^{\text{($n$)}}}^{\tau^{\text{($m$)}}} \abs{
\sigma\left(s,X^{(m)}_{\cdot\wedge\varrho^{(m)}_s},\bar{a}_s
\right)}^2+\abs{\bar{q}_s}^2\,ds
\right]
\end{align*}
for some constant $C>0$ independent from $m$ and $n$. Noting (A2),
\eqref{E:tptq:eps}  for $(\bar{\tau},\bar{p},\bar{q})$, and $\tau^{\text{($n$)}}\uparrow\bar{\tau}$, we can
apply  the dominated convergence
theorem 
to deduce that the sequence $(X^{\text{($n$)}}_{
\tau^{\text{($n$)}}
})_{n\ge 1}$ is a Cauchy sequence in
$\mathbb{L}^2(\cF_{\bar{\tau}},\Prob,\R^d)$ and we denote its limit 
by $\xi$.
Now, define $\bar{X}:\R_+\times\Omega\to\R^d$ by
\begin{align*}
\bar{X}(s,\omega):=\sum_{n=1}^\infty 
\bfone_{[\tau^{(n-1)}(\omega),\tau^{(n)}(\omega))}(s).X^{(n)}(s,\omega)+\bfone_{[\bar{\tau}(\omega),\infty)}(s).\xi(\omega),
\end{align*}
where $\tau^{(0)}=0$.
We have $\bar{X}\in\mathbb{S}^2(\F,\Prob,\R^d)$ because $(X^{\text{($n$)}}_{
\tau^{\text{($n$)}}
})_{n\ge 1}$
has a subsequence  $(X^{\text{($n_k$)}}_{
\tau^{\text{($n_k$)}}
})_{k\ge 1}$ that converges to $\xi$, $\Prob$-a.s. This also yields that, $\Prob$-a.s., for every $s\ge t$, 
\begin{align*}
\bar{X}_{s\wedge\bar{\tau}}=\mathbf{x}_t+\int_t^{s\wedge\bar{\tau}} 
[b(r,\bar{X}_{\cdot\wedge\bar{\varrho}_r},\bar{a}_r)+\bar{p}_r]
\,dr+
\int_t^{s\wedge\bar{\tau}} 
[\sigma(r,\bar{X}_{\cdot\wedge\bar{\varrho}_r},\bar{a}_r)+\bar{q}_r]
dW_r,
\end{align*}
where 
$\bar{\varrho}:\R_+\times\Omega\to\R_+$ is defined by 
\begin{align*}
\bar{\varrho}(s,\omega):=
\sum_{n=1}^\infty 
\varrho^{(n)}(s,\omega).\bfone_{[\tau^{(n-1)}(\omega),\tau^{(n)}(\omega))}(s)
+s.\bfone_{[\bar{\tau}(\omega),\infty)}(s).
\end{align*}
Finally, 
{\color{black}  by \eqref{E:structural}} and since $v$ is l.s.c.~and bounded from below, we have 
\begin{align*}
\Mean\left[\widehat{v}(\bar{\varrho}_s\wedge\bar{\tau},\bar{X},y)\right]&\le\varliminf_{n\to\infty} 
\Mean\left[ \widehat{v}(\bar{\varrho}_s\wedge\tau^{\text{($n$)}},\bar{X},y)\right]\\
&\le \varliminf_{n\to\infty} \eps\cdot\Mean[s\wedge \tau^{(n)}-t]=
\eps\cdot\Mean[s\wedge\bar{\tau}-t]\quad\text{for all $s\in [t,T]$.}
\end{align*}
We can conclude that 
$(\bar{\varrho},\bar{\tau},\bar{a},\bar{p},\bar{q},\bar{X})\in\mathcal{S}$
and it is an upper bound
of $(\mathfrak{s}^{\text{($n$)}})_{n\ge 1}$.

Thus, 
by the Brezis-Browder ordering principle, $\mathcal{S}$
has an $\mathcal{N}$-maximal element.

\textit{Step~3 }(existence for $\tau= T$, the ``extension step"). 
Fix an $\mathcal{N}$-maximal element
 $\mathfrak{s}^\eps=
 (\tau^\eps,\varrho^\eps,a^\eps,p^\eps,q^\eps,X^\eps)$  of
$\mathcal{S}$ from Step~2. Assume 
 $\Prob(\tau^\eps<T)>0$. 
By the extension lemma (Lemma~\ref{L:Step3}), 
 there is 
an 
$\mathfrak{s}^{\eps,+}=
 (\tau^{\eps,+},\varrho^{\eps,+},a^{\eps,+},p^{\eps,+},q^{\eps,+},X^{\eps,+})
\in\mathcal{S}$ such that $\mathfrak{s}^\eps \preceq\mathfrak{s}^{\eps,+}$ and 
$\Prob(\tau^\eps<\tau^{\eps,+})>0$.  Hence $\mathcal{N}(\mathfrak{s}^\eps)<
\mathcal{N}(\mathfrak{s}^{\eps,+})$, i.e., we have a contradiction to 
$\mathfrak{s}^\eps$ being $\mathcal{N}$-maximal. 
Thus, $\tau^\eps=T$, $\Prob$-a.s.  
\end{proof}

\begin{proof}[Proof of Theorem~\ref{T:Sufficient}]
Fix $\eps>0$, $(t,\mathbf{x},y)\in {\color{black} [0,T]}
\times\Omega\times\R$ 
 with 
 $v(t,\mathbf{x})\le y$.
 By Proposition~\ref{P:epsApproxSol},
 there exists an $\eps$-approximate solution 
 $(\tau,\varrho,a,p,q,X)$  of  \eqref{E:mv:CSDE} for $(\widehat{v},K)$  starting at $(t,\mathbf{x},y)$
 such that $\tau= 
 T$,
 $\Prob$-a.s.  Next, let  $T_1\in (t,T]$. Thanks to (A2), applying   standard estimates (see, e.g., Section~3.2 in \cite{Zhang17book})
 yields
 \begin{align}\label{E1:Proof:Sufficiency}
 \Mean [\norm{X^{t,\mathbf{x},a}_{\cdot\wedge (t+\delta)}-X_{\cdot\wedge (t+\delta)}}_\infty^2]
 \le C_0\,\eps+\int_t^{t+\delta} C_0\Mean [\norm{X^{t,\mathbf{x},a}_{\cdot\wedge s}-X_{\cdot\wedge 
 \varrho_s}}_\infty^2]\,ds
 \end{align}
 for all $\delta\in [0,T-t]$ and some constant $C_0>0$ independent of $\delta$, $\eps$, and $T_1$.
Further note that, by condition~(ii) of Definition~\ref{D:Approx} and by (A2), for every $s\in [0,T_1]$,
\begin{equation}\label{E2:Proof:Sufficiency}
\begin{split}
&\Mean\left[\norm{X_{\cdot\wedge s}-X_{\cdot\wedge\varrho_s}}^2_\infty\right]\le C_1\,\eps \\ &
+ C_1
\Mean\left[
\int_{(s-\eps)\vee 0}^s \abs{b(r,X_{\cdot\wedge\varrho_r},a_r)}^2\
+  \abs{\sigma(r,X_{\cdot\wedge\varrho_r},a_r)}^2\,dr
\right]
\le C_1\,\eps(1+ 2L_b^2)
\end{split}
\end{equation}
for some $C_1>0$ independent of $s$,  $\eps$, and $T_1$.
 Thus we can apply Gronwall's inequality to \eqref{E1:Proof:Sufficiency} combined with \eqref{E2:Proof:Sufficiency}
  to  deduce 
 $ \Mean [\norm{X^{t,\mathbf{x},a}_{\cdot\wedge T_1}-X_{\cdot\wedge T_1}}_\infty^2]\le C\eps$
 for some $C>0$ independent of $\eps$ and $T_1$. Finally,  by (H) and \eqref{E2:Proof:Sufficiency},
 there is a constant $\tilde{C}>0$ independent of $\eps$ and $T_1$ (cf.~\eqref{E:Necc:Proof}) such that
 {\color{black}
\begin{align*}
& \Mean [v(T_1,X^{t,\mathbf{x},a})]\le
 L (C\eps)^{1/2}+\Mean[v(T_1,X)]\\
&\le
 L (C\eps)^{1/2}+ L(C_1\,\eps(1+2L_b^2))^{1/2}+\Mean[v(T_1,X_{\cdot\wedge\varrho_{T_1}})]\\
&\le
L (C\eps)^{1/2}+ L(C_1\,\eps(1+2L_b^2))^{1/2}+L(1+\tilde{C})\eps^{1/2}+\Mean[v(\varrho_{T_1},X)]\\
&\le {\color{black} y+}L (C\eps)^{1/2}+ L(C_1\,\eps(1+2L_b^2))^{1/2}+L(1+\tilde{C})\eps^{1/2}+\eps(T-t). 
\end{align*}}
The last inequality is due to 
Definition~\ref{D:Approx}~(vi).
We can 
see  that \eqref{E:WeakApproxViabDef}  holds.
\end{proof}

\section{Quasi-contingent solutions of HJB equations}\label{S:HJB}
 In this section and the next,  we consider the terminal value problem  
\begin{equation}\label{E:mv_super:HJB}
\begin{split}
\partial_t u+\inf_{a\in A} \left[b(t,\mathbf{x},a)\cdot\partial_{\mathbf{x}} u+
\frac{1}{2} (\sigma\sigma^\top)(t,\mathbf{x},a):\partial^2_{\mathbf{xx}} u \right]&=0\qquad\text{on $[0,T)\times\Omega$,}\\
u(T,\mathbf{x})&=h(\mathbf{x})\qquad\text{on $\Omega$,}
\end{split}
\end{equation}
and the value function  $V^S:[0,T]\times\Omega\to\R$ 
  defined by\footnote{
Recall that $X^{t,\mathbf{x},a}$ solves \eqref{E:mv:CSDE}.
}
\begin{align*}
V^S(t,\mathbf{x}):=\inf_{a\in\mathcal{A}} \Mean\left[h(X^{t,\mathbf{x},a})\right].
\end{align*}
together with $\widehat{V}^S:[0,T]\times\Omega\times\R\to\R$
defined by $\widehat{V}^S(t,\mathbf{x},y):=V^S(t,\mathbf{x})-y$.

\subsection{Quasi-contingent supersolutions}\label{S:Supersol}
For our notion of supersolution, we use the following directional derivative
(cf.~\cite{SubbotinaEtAl85,Subbotina06, Haussmann92MOR, Haussmann94SICON, BK22SICON}
for similar stochastic derivatives and \cite{Carja14SICON} for a related first-order derivative with function-valued direction).

\begin{definition}
Let the function $v$ be bounded from below. 
The \emph{con\-tin\-gent epi\-deri\-va\-tive  $D^{1,2}_\uparrow v(t,\mathbf{x})\,(\mathcal{E})$
of  $v$ at 
$(t,\mathbf{x})\in [0,T)\times\Omega$ in a
multi-valued direction $\mathcal{E}\subset
\mathbb{L}^2(\F^t,\Prob,\R^d)\times\mathbb{L}^2(\F^t,\Prob,\R^{d\times d})$} is defined by
\begin{align*}
&D^{1,2}_\uparrow v(t,\mathbf{x})\,(\mathcal{E})
:=
\sup_{\eps>0}\,\inf\Biggl\{
\Mean\left[\frac{v(t+\delta,X^{t,\mathbf{x};b,\sigma;p,q})-v(t,\mathbf{x})}{\delta}\right]:\,
(b,\sigma)\in\mathcal{E},
\\
&\qquad\qquad
\delta\in (0,\eps\wedge (T-t)],\text{ and 
$(p,q)\in{\color{black}\mathbb{L}^0(\F^t,\R^d)\times\mathbb{L}^0(\F^t,\R^{d\times d})}$ with}\\
&
\qquad\qquad
\norm{\bfone_{\llb t,t+\delta\rrb}.p}^2_{\mathbb{L}^2(\F^t,\Prob,\R^d)}+
\norm{\bfone_{\llb t,t+\delta\rrb}.q}^2_{\mathbb{L}^2(\F^t,\Prob,\R^{d\times d})}\le\eps\delta
\Biggr\},
\end{align*}
where 
$X^{t,\mathbf{x};b,\sigma;p,q}$ satisfies \eqref{E:D:QTS:X}.
\end{definition}

{\color{black} \begin{remark}\label{R:Epi:LimInf}
It is perhaps more intuitive to write $D^{1,2}_\uparrow$ as a limit inferior:
\begin{align*}
D^{1,2}_\uparrow v(t,\mathbf{x})\,(\mathcal{E})=\varliminf_{\substack{\delta\downarrow 0,\\ 
(p,q)\to (0,0)}}
\inf_{(b,\sigma)\in\mathcal{E}} \Mean\left[\frac{v(t+\delta,X^{t,\mathbf{x};b,\sigma;p,q})-v(t,\mathbf{x})}{\delta}\right].
\end{align*}
Note that
analogous directional derivatives with the same order of $\varliminf$ and $\inf$  
already show up in \cite{LionsSouganidis85SICON,SubbotinaEtAl85}. 
Connections of   $D^{1,2}_\uparrow$ with path derivatives of smooth functions can be found in Theorem~\ref{T:Classical:Contingent}
and its proof.
\end{remark}}


The next 
results  link 
contingent epiderivatives with mean quasi-tangent sets from Definition~\ref{D:QTS}. 
This allows the application of the mean viability theorems to  \eqref{E:mv_super:HJB}. 

\begin{lemma}\label{L:D12:EinQTS}
{\color{black} $D^{1,2}_\uparrow v(t,\mathbf{x})\,(\mathcal{E})\le 0\Longleftrightarrow
{\mathcal{E}}\in\mathcal{QTS}_{\widehat{v},K}(t,\mathbf{x},v(t,\mathbf{x}))$.}
\end{lemma}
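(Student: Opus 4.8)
\textbf{Proof plan for Lemma~\ref{L:D12:EinQTS}.}
The plan is to unwind both sides of the equivalence into statements about the same quantities, namely the expectations $\Mean[v(t+\delta,X^{t,\mathbf{x};b,\sigma;p,q})]$, using the structural identity $\widehat{v}(s,\mathbf{x}',y')=v(s,\mathbf{x}')-y'$ from \eqref{E:structural} together with $K=(-\infty,0]$. Note that with $y=v(t,\mathbf{x})$ the hypothesis $\widehat{v}(t,\mathbf{x},y)=v(t,\mathbf{x})-v(t,\mathbf{x})=0\in K$ is automatically satisfied, so $\mathcal{QTS}_{\widehat{v},(-\infty,0]}(t,\mathbf{x},v(t,\mathbf{x}))$ is meaningful. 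The first observation I would record is that for $K=(-\infty,0]$ one has $\inf_{k\in K}\abs{c-k}=c^+=(c\vee 0)$ for any real $c$; hence the defining inequality in Definition~\ref{D:QTS}, $\inf_{k\in K}\abs{\Mean[\widehat{v}(t+\delta,X,y)]-k}\le\eps\delta$, becomes $\bigl(\Mean[v(t+\delta,X)]-v(t,\mathbf{x})\bigr)^+\le\eps\delta$, i.e.
\[
\Mean\!\left[v(t+\delta,X^{t,\mathbf{x};b,\sigma;p,q})\right]-v(t,\mathbf{x})\le\eps\delta .
\]
So membership $\mathcal{E}\in\mathcal{QTS}_{\widehat{v},(-\infty,0]}(t,\mathbf{x},v(t,\mathbf{x}))$ says precisely: for every $\eps>0$ there exist $\delta\in(0,\eps\wedge(T-t)]$, $(b,\sigma)\in\mathcal{E}$, and admissible $(p,q)$ satisfying the $\eps\delta$ norm bound \eqref{E:D:QTS:pq} such that the displayed difference quotient inequality holds with the slack $\eps\delta$, equivalently (dividing by $\delta$) such that $\Mean\bigl[(v(t+\delta,X)-v(t,\mathbf{x}))/\delta\bigr]\le\eps$.

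Next I would compare this with the definition of $D^{1,2}_\uparrow v(t,\mathbf{x})(\mathcal{E})$. Writing $\Phi(\eps)$ for the inner infimum over all admissible triples $(\delta,(b,\sigma),(p,q))$ (with $\delta\le\eps\wedge(T-t)$, $(b,\sigma)\in\mathcal{E}$, and the norm bound $\le\eps\delta$) of the quantity $\Mean[(v(t+\delta,X)-v(t,\mathbf{x}))/\delta]$, one has $D^{1,2}_\uparrow v(t,\mathbf{x})(\mathcal{E})=\sup_{\eps>0}\Phi(\eps)$, and $\Phi$ is nonincreasing in $\eps$ because shrinking $\eps$ shrinks the feasible set over which the infimum is taken; hence $\sup_{\eps>0}\Phi(\eps)=\lim_{\eps\downarrow 0}\Phi(\eps)$. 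Now $D^{1,2}_\uparrow v(t,\mathbf{x})(\mathcal{E})\le 0$ means $\Phi(\eps)\le 0$ for every $\eps>0$ (since the sup is $\le 0$). On the other hand, $\Phi(\eps)\le 0$ is slightly weaker than what $\mathcal{QTS}$ membership demands at level $\eps$, which is $\Phi(\eps)\le\eps$ \emph{witnessed} — but an infimum being $\le 0$ is the same as it being $\le\eps$ for this purpose once we allow the $\eps\delta$ slack. Concretely: $\mathcal{E}\in\mathcal{QTS}$ at level $\eps$ is exactly the statement $\Phi(\eps)\le\eps$ (an infimum is $\le\eps$ iff there is a feasible point with value $\le\eps$; and the $\eps\delta$ slack in Definition~\ref{D:QTS} is precisely the $\eps$ after division by $\delta$). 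So $\mathcal{E}\in\mathcal{QTS}_{\widehat{v},(-\infty,0]}(t,\mathbf{x},v(t,\mathbf{x}))\iff \Phi(\eps)\le\eps$ for all $\eps>0$, while $D^{1,2}_\uparrow v(t,\mathbf{x})(\mathcal{E})\le 0\iff \Phi(\eps)\le 0$ for all $\eps>0$.

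The remaining point, and the only one requiring a genuine argument rather than bookkeeping, is the equivalence
\[
\bigl(\forall\eps>0:\ \Phi(\eps)\le\eps\bigr)\iff\bigl(\forall\eps>0:\ \Phi(\eps)\le 0\bigr).
\]
The direction $\Leftarrow$ is trivial. For $\Rightarrow$: fix $\eps_0>0$; I want $\Phi(\eps_0)\le 0$. For any $\eps\in(0,\eps_0]$ the feasible set at level $\eps$ is contained in that at level $\eps_0$, so $\Phi(\eps_0)\le\Phi(\eps)\le\eps$; letting $\eps\downarrow 0$ gives $\Phi(\eps_0)\le 0$. That is the whole content; monotonicity of $\Phi$ in $\eps$ does all the work, and no compactness or selection argument is needed. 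I would then assemble the three boxed equivalences into the claimed iff. The main obstacle, such as it is, is purely notational: making sure the $\eps\delta$ slack in Definition~\ref{D:QTS} and the $\eps\delta$ norm constraint \eqref{E:D:QTS:pq} are carried through the division by $\delta$ consistently so that the feasible sets in $\Phi(\eps)$ match those implicit in $\mathcal{QTS}$ and in $D^{1,2}_\uparrow$ on the nose; once the identity $\inf_{k\le 0}\abs{c-k}=c^+$ is invoked, everything else is monotonicity of an infimum in the size of its domain.
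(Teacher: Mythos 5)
Your proof is correct and follows essentially the same route as the paper's: both reduce the distance to $K=(-\infty,0]$, via Assumption~\ref{A:structural} with $y=v(t,\mathbf{x})$, to the one-sided bound $\Mean[v(t+\delta,X)-v(t,\mathbf{x})]\le\eps\delta$, and both use that the feasible set in Definition~\ref{D:QTS} shrinks as $\eps$ does to move between the $\eps$-slack and the threshold~$0$. The one small inaccuracy --- your parenthetical ``an infimum is $\le\eps$ iff there is a feasible point with value $\le\eps$'' fails when the infimum equals $\eps$ without being attained --- is harmless, since your monotonicity step actually yields the strict inequality needed to extract a witness.
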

\begin{proof}
(i) 
If $D^{1,2}_\uparrow v(t,\mathbf{x})\,(\mathcal{E})\le 0$, 
then, for all $n\in\N$, there is a 
$(\delta_n,(b,\sigma)_n,p_n,q_n)\in
(0,\tfrac{1}{n}]\times\mathcal{E}\times
{\color{black}
\mathbb{L}^0(\F^t,\R^d)\times\mathbb{L}^0(\F^t,\R^{d\times d})
}$
such that
\begin{equation}\label{E:quasiContDeriv:quasiContSet} 
\begin{split}
&
\Mean\left[v(t+\delta_n,X^{t,\mathbf{x};(b,\sigma)_n;p_n,q_n})-v(t,\mathbf{x})\right]\delta_n^{-1}
\le n^{-1}
\quad\text{and}\\
&\qquad
 \norm{\bfone_{\llb t,t+\delta_n\rrb}.p_n}^2_{\mathbb{L}^2(\F^t,\Prob,\R^d)}+
\norm{\bfone_{\llb t,t+\delta_n\rrb}.q_n}^2_{\mathbb{L}^2(\F^t,\Prob,\R^{d\times d})}\le \delta_n/n,
\end{split}
\end{equation}
 i.e., $\mathcal{E}\in\mathcal{QTS}_{\widehat{v},(-\infty,0]}(t,\mathbf{x},v(t,\mathbf{x}))$.

(ii) Suppose that 
 $\mathcal{E}\in\mathcal{QTS}_{\widehat{v},(-\infty,0]}(t,\mathbf{x},v(t,\mathbf{x}))$, i.e.,
there exists 
a sequence
$(\delta_n,(b,\sigma)_n,p_n,q_n)_n$ in
$(0,T-t]\times\mathcal{E}\times{\color{black}
\mathbb{L}^0(\F^t,\R^{d})\times\mathbb{L}^0(\F^t,\R^{d\times d})}$
with $\delta_n\le n^{-1}$ such that \eqref{E:quasiContDeriv:quasiContSet} holds. Then
$D^{1,2}_\uparrow v(t,\mathbf{x})\,(\mathcal{E})\le 0$ follows. 
\end{proof}

\begin{lemma}\label{L:QTSinQTS}
{\color{black}
$v(t,\mathbf{x})\le y\Longrightarrow\mathcal{QTS}_{\widehat{v},K}(t,\mathbf{x},v(t,\mathbf{x}))\subset
\mathcal{QTS}_{\widehat{v},K}(t,\mathbf{x},y)$.}
\end{lemma}

\begin{proof}
Let $\mathcal{E}\in \mathcal{QTS}_{\widehat{v},K}(t,\mathbf{x}, v(t,\mathbf{x}))$. 
With $\eps$, $\delta$, and $X$ as in Definition~\ref{D:QTS},
\begin{align*}
\eps\delta\ge \Mean\left[\widehat{v}(t+\delta,X,v(t,\mathbf{x}))\right]=\Mean\left[
v(t+\delta,X)-v(t,\mathbf{x})
\right]\ge \Mean\left[v(t+\delta,X)-y)\right]
\end{align*}
because $v(t,\mathbf{x})\le y$. This
  immediately yields $\mathcal{E}\in \mathcal{QTS}_{\widehat{v},K}(t,\mathbf{x},y)$.
\end{proof}

 Recall  $\mathcal{E}_+$ defined by \eqref{E:E+}.
 
 \begin{definition}
We say $v$
is a \emph{quasi-contingent supersolution} of \eqref{E:mv_super:HJB} if $v$ is l.s.c.~and bounded from below, $v(T,\cdot)\ge h$, and, for each $(t,\mathbf{x})\in [0,T)\times\Omega$,
\begin{align*}
D^{1,2}_\uparrow v(t,\mathbf{x})\,(\mathcal{E}_+(t,\mathbf{x}))\le 0.
\end{align*}
\end{definition}

\begin{theorem}\label{T:VS:super}
If $V^S$ is l.s.c., then it is a quasi-contingent supersolution of \eqref{E:mv_super:HJB}.
\end{theorem}

\begin{proof}
 Fix $\eps>0$ and $(t,\mathbf{x},y)\in [0,T)\times\Omega$ with $y\ge V^S(t,\mathbf{x})$.  
Also let  $s\in [t,T]$. 
 By the dynamic programming principle
(e.g., Theorem~3.5 in \cite{ElKarouiTanII}), 
there is an $a\in\mathcal{A}$  with 
$\Mean[V^S(s,X^{t,\mathbf{x},a})]-V^S(t,\mathbf{x})\le \eps$.
Moreover, by Proposition~4 in \cite{ClaisseEtAl16} (cf.~also Lemma~2.7 in \cite{cosso21v2path}),
 we can assume that $a\in\mathcal{A}^t$. 
Thus (cf.~\eqref{E:WeakApproxViabDef} and Remark~\ref{R:Sufficient:Weaker})
\begin{align*}
{\color{black} \inf_{\tilde{a}\in\mathcal{A}^t} \Mean\left[V^S(s,X^{t,\mathbf{x},\tilde{a}})
\right]\le y.}
\end{align*}
By the proof of Theorem~\ref{T:Necc}, 
$\mathcal{E}_+(t,\mathbf{x})\in  \mathcal{QTS}_{\widehat{V}^S,K}(t,\mathbf{x},y)$.
Hence, by Lemma~\ref{L:D12:EinQTS}, 
$D^{1,2}_\uparrow V^S(t,\mathbf{x})\,(\mathcal{E}_+(t,\mathbf{x}))\le 0$.
\end{proof}

\begin{theorem}\label{T:Super:Com}
Let $v=v(t,\mathbf{x})$ be a  quasi-contingent supersolution of \eqref{E:mv_super:HJB}
that is Lipschitz in $\mathbf{x}$ uniformly in $t$.
 Then  $V^S\le v$.
\end{theorem}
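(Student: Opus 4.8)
The plan is to deduce the comparison principle $V^S \le v$ by combining the mean viability machinery with the stochastic representation of $V^S$. First I would set $\widehat{v}(t,\mathbf{x},y) := v(t,\mathbf{x}) - y$ and $K = (-\infty,0]$, so that Assumption~\ref{A:structural} holds for this particular $v$, and I would check that the hypotheses of Theorem~\ref{T:Sufficient} are in force: $v$ is bounded below and Lipschitz in $\mathbf{x}$ uniformly in $t$ by assumption, so the first part of hypothesis~(H) holds; for the time-regularity part of~(H) one either invokes an additional standing regularity of quasi-contingent supersolutions or, more cleanly, falls back on Remark~\ref{R:Sufficient:Weaker}, which drops the time-regularity requirement at the cost of replacing approximate mean viability by the pointwise-in-$s$ statement $\inf_{a\in\mathcal{A}^t,\,k\in K}\lvert\Mean[\widehat{v}(s,X^{t,\mathbf{x},a},y)]-k\rvert = 0$. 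That weaker conclusion is all we need here.

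Next I would verify the tangency hypothesis of Theorem~\ref{T:Sufficient} (or of the version behind Remark~\ref{R:Sufficient:Weaker}): for every $(t,\mathbf{x},y)\in[0,T)\times\Omega\times\R$ with $\widehat{v}(t,\mathbf{x},y)\in K$, i.e.\ $y \ge v(t,\mathbf{x})$, we must show $\mathcal{E}_+(t,\mathbf{x}) \in \mathcal{QTS}_{\widehat{v},K}(t,\mathbf{x},y)$. Since $v$ is a quasi-contingent supersolution, $D^{1,2}_\uparrow v(t,\mathbf{x})(\mathcal{E}_+(t,\mathbf{x})) \le 0$. By Lemma~\ref{L:D12:EinQTS} this gives $\mathcal{E}_+(t,\mathbf{x}) \in \mathcal{QTS}_{\widehat{v},K}(t,\mathbf{x},v(t,\mathbf{x}))$, and then Lemma~\ref{L:QTSinQTS} upgrades this to $\mathcal{E}_+(t,\mathbf{x}) \in \mathcal{QTS}_{\widehat{v},K}(t,\mathbf{x},y)$ for all $y \ge v(t,\mathbf{x})$. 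Hence the tangency condition holds, and the relevant sufficiency result yields that $(\widehat{v},K)$ is (approximately) mean viable for \eqref{E:mv:CSDE}.

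Then I would extract the comparison from mean viability at the terminal time. Fix $(t,\mathbf{x})\in[0,T)\times\Omega$ and take $y := v(t,\mathbf{x})$, so $\widehat{v}(t,\mathbf{x},y) = 0 \in K$. Mean viability gives, for every $\eta>0$, some $a\in\mathcal{A}^t$ with $\inf_{k\in K}\lvert \Mean[\widehat{v}(T,X^{t,\mathbf{x},a},y)] - k\rvert \le \eta$, i.e.\ $\Mean[v(T,X^{t,\mathbf{x},a})] - v(t,\mathbf{x}) \le \eta$. Since $v$ is a supersolution we have $v(T,\cdot)\ge h$, so $\Mean[h(X^{t,\mathbf{x},a})] \le \Mean[v(T,X^{t,\mathbf{x},a})] \le v(t,\mathbf{x}) + \eta$. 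Taking the infimum over $a\in\mathcal{A}^t$ and then letting $\eta\downarrow 0$ gives $\inf_{a\in\mathcal{A}^t}\Mean[h(X^{t,\mathbf{x},a})] \le v(t,\mathbf{x})$. It remains to note that the infimum over $\mathcal{A}^t$ equals the infimum over all of $\mathcal{A}$; this is the standard fact (cf.\ Proposition~4 in \cite{ClaisseEtAl16} or Lemma~2.7 in \cite{cosso21v2path}, as used in the proof of the supersolution theorem above) that one may restrict to controls that are $\F^t$-predictable and equal to $a^\circ$ on $\llb 0,t\rrb$ without changing the value. Hence $V^S(t,\mathbf{x}) \le v(t,\mathbf{x})$, and since $(t,\mathbf{x})$ was arbitrary in $[0,T)\times\Omega$ and $V^S(T,\cdot) = h \le v(T,\cdot)$, we conclude $V^S \le v$ on $[0,T]\times\Omega$.

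The main obstacle I anticipate is the time-regularity issue: a general quasi-contingent supersolution that is merely Lipschitz in $\mathbf{x}$ need not satisfy the $\tfrac12$-Hölder-in-time bound of hypothesis~(H), so one cannot directly invoke Theorem~\ref{T:Sufficient} in its strong form. The clean route is via Remark~\ref{R:Sufficient:Weaker}, whose conclusion — pointwise in $s\in[t,T]$, and in particular at $s=T$ — is exactly what the argument above consumes; one should double-check that the sufficiency result underlying that remark indeed requires only Lipschitz-in-$\mathbf{x}$ regularity (which holds here) and the tangency condition (established above). A secondary, purely bookkeeping point is the reduction from $\mathcal{A}^t$ to $\mathcal{A}$ in the value function, which is routine but must be cited.
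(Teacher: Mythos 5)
Your proof is correct and follows essentially the same route as the paper: reduce to the tangency condition via Lemmata~\ref{L:D12:EinQTS} and \ref{L:QTSinQTS}, invoke Remark~\ref{R:Sufficient:Weaker} (rather than Theorem~\ref{T:Sufficient} directly, precisely because of the time-regularity gap you flagged), and then evaluate at $s=T$ with $y=v(t,\mathbf{x})$ and $v(T,\cdot)\ge h$. One small simplification: the last step about equating $\inf_{\mathcal{A}^t}$ with $\inf_{\mathcal{A}}$ is unnecessary — since $\mathcal{A}^t\subset\mathcal{A}$ and $V^S$ is an infimum over $\mathcal{A}$, the single control $a\in\mathcal{A}^t$ you produce already gives $V^S(t,\mathbf{x})\le\Mean[h(X^{t,\mathbf{x},a})]\le v(t,\mathbf{x})+\eta$, with no appeal to \cite{ClaisseEtAl16} needed (that citation is needed for the existence theorem in the other direction, not here).
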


\begin{proof}
Note that, by Lemmata~\ref{L:D12:EinQTS} and \ref{L:QTSinQTS}, 
the following holds:
\begin{equation}\label{E1:Comp1}
\begin{split}
&\forall (t,\mathbf{x},y)\in [0,T)\times\Omega\times\R\,\,
\text{with 
${\color{black} v(t,\mathbf{x})\le y}$}:\\
&\qquad
\mathcal{E}_+(t,\mathbf{x})
\in
\mathcal{QTS}_{\widehat{v},K}(t,\mathbf{x},v(t,\mathbf{x}))\subset 
\mathcal{QTS}_{\widehat{v},K}(t,\mathbf{x},y).
\end{split}
\end{equation}
Now, fix $(t,\mathbf{x})\in [0,T)\times\Omega$.
For every
 $\eps>0$, there exists, by \eqref{E1:Comp1} and Remark~\ref{R:Sufficient:Weaker},
a control 
$a\in\mathcal{A}^t$ such that
${\color{black} \Mean [v(T,X^{t,\mathbf{x},a})]\le v(t,\mathbf{x})+\eps}$.
Thus 
\begin{align*}
V^S(t,\mathbf{x})\le \Mean[h(X^{t,\mathbf{x},a})]\le \Mean [v(T,X^{t,\mathbf{x},a})]\le v(t,\mathbf{x})+\eps.
\end{align*}
This concludes the proof.
\end{proof}

\subsection{Quasi-contingent subsolutions} 
\begin{definition}\label{D:hypo}
Let $v$ be bounded from above.
Then the \emph{contingent hypoderivative  $D^{1,2}_\downarrow v(t,\mathbf{x})\,(1,b,\sigma)$
of $v$ at 
$(t,\mathbf{x})\in [0,T)\times\Omega$ in 
a 
 direction $(1,b,\sigma)\in\R\times
\mathbb{L}^2(\F^t,\Prob,\R^d)\times\mathbb{L}^2(\F^t,\Prob,\R^{d\times d})$} is defined by
\begin{equation}\label{E:D:hypo}
\begin{split}
D^{1,2}_\downarrow v(t,\mathbf{x})\,(1,b,\sigma)&:=
\inf_{\eps>0}\,\sup\Biggl\{
\Mean\left[\frac{v(t+\delta,X^{t,\mathbf{x};b,\sigma;p,q})-v(t,\mathbf{x})}{\delta}\right]:\,
\\
&\quad\delta\in (0,\eps],\text{ and 
$(p,q)\in{\color{black}\mathbb{L}^0(\F^t,\R^d)\times\mathbb{L}^0(\F^t,\R^{d\times d})}$ with}\\
&\quad\norm{\bfone_{\llb t,t+\delta\rrb}.p}^2_{\mathbb{L}^2(\F^t,\Prob,\R^d)}+
\norm{\bfone_{\llb t,t+\delta\rrb}.q}^2_{\mathbb{L}^2(\F^t,\Prob,\R^{d\times d})}\le\eps\delta
\Biggr\},
\end{split}
\end{equation}
where 
$X^{t,\mathbf{x};b,\sigma;p,q}$ satisfies \eqref{E:D:QTS:X}.
\end{definition}

{\color{black}
\begin{remark}
We may write 
$D^{1,2}_\downarrow$ as a limit superior (cf.~Remark~\ref{R:Epi:LimInf}):
\begin{align*}
D^{1,2}_\downarrow v(t,\mathbf{x})\,(1,b,\sigma)=\varlimsup_{\substack{\delta\downarrow 0,\\ 
(p,q)\to (0,0)}}
 \Mean\left[\frac{v(t+\delta,X^{t,\mathbf{x};b,\sigma;p,q})-v(t,\mathbf{x})}{\delta}\right].
\end{align*}
\end{remark}
}

 \begin{definition}
We call $v$  a  \emph{quasi-contingent subsolution} of \eqref{E:mv_super:HJB} if $v$ is u.s.c.~and bounded from above, $v(T,\cdot)\le h$, and
 for all 
 $(t,\mathbf{x})\in [0,T)\times\Omega$,
\begin{align*}
\inf_{a\in\mathcal{A}^{\color{black}t}} D^{1,2}_\downarrow v(t,\mathbf{x})\,(1,(b,\sigma)(\cdot,\mathbf{x}_{\cdot\wedge t},
a))\ge 0.  
\end{align*}
\end{definition}

Given $a\in\mathcal{A}$ and $(t,\mathring{\mathbf{x}})\in\R_+\times\Omega$, define $a^{t,\mathring{\mathbf{x}}}\in\mathcal{A}^{\color{black} t}$ by
 \begin{align*}
 a^{t,\mathring{\mathbf{x}}}(s,\omega):=a(s,
 \bfone_{[0,t)}.\mathring{\mathbf{x}}+\bfone_{[t,\infty)}.(\mathring{\mathbf{x}}_t+\omega-\omega_t)).
 \end{align*}
 
 For fixed  $a\in\mathcal{A}$, we consider here, instead of \eqref{E:mv:CSDE},
  the systems
 \begin{equation}\label{E0:mv:CSDE:sub}
\begin{split}
&dX_s^{t,\mathbf{x},\mathring{\mathbf{x}},a}=b(s,X^{t,\mathbf{x},\mathring{\mathbf{x}},a},a^{t,\mathring{\mathbf{x}}}_s)\,ds+
\sigma(s,X^{t,\mathbf{x},\mathring{\mathbf{x}},a},a^{t,\mathring{\mathbf{x}}}_s)\,dW_s
\,\,\,\text{on $(t,\infty)$, $\Prob$-a.s.,}\\
&X^{t,\mathbf{x},\mathring{\mathbf{x}},a}_s=\mathbf{x}_s\quad\text{on $[0,t]$,}
\end{split}
\end{equation}
and, instead of \eqref{E:E+}, 
$\mathcal{E}^a_+(t,\mathbf{x},\mathring{\mathbf{x}}):=
\{\bfone_{[0,T]}.(b,\sigma)(\cdot,\mathbf{x}_{\cdot\wedge t},a^{t,\mathring{\mathbf{x}}})\}$,
where $(t,\mathbf{x},\mathring{\mathbf{x}})\in\R_+\times\Omega\times\Omega$.

\begin{definition}
Fix $a\in\mathcal{A}$. The pair $(\widehat{v},K)$ is \emph{mean viable} 
for
\eqref{E0:mv:CSDE:sub} if, for  all  $s\in [t,T]$ and 
$(t,\mathbf{x},\mathring{\mathbf{x}},y)\in\R_+\times\Omega\times\Omega\times\R$ 
 with ${\color{black}v(t,\mathbf{x})\le y}$,
 ${\color{black} \Mean\left[v(s,X^{t,\mathbf{x},\mathring{\mathbf{x}},a}\right]\le y}$. 
\end{definition}

The next remark contains counterparts to Theorems~\ref{T:Necc} and ~\ref{T:Sufficient}, whose proofs have
no essential differences and are omitted.

\begin{remark}\label{R:Necc:Suff}
 Let $u=u(t,\mathbf{x}): [0,T]\times\Omega\to\R$ be l.s.c.,
Lipschitz in $\mathbf{x}$ uniformly in $t$, and bounded from below.
Define $\widehat{u}:[0,T]\times\Omega\times\R\to\R$ by
 $\widehat{u}(t,\mathbf{x},y):=u(t,\mathbf{x})-y$.
Fix $a\in\mathcal{A}$. Then
$(\widehat{u},K)$ is 
mean viable for
\eqref{E0:mv:CSDE:sub} if and only if 
${\color{black} u(t,\mathbf{x})\le y}$
implies 
$\mathcal{E}^a_+(t,\mathbf{x},\mathring{\mathbf{x}})\in
\mathcal{QTS}_{\widehat{u},K}(t,\mathbf{x},y)$
for every $(t,\mathbf{x},\mathring{\mathbf{x}},y)\in [0,T)\times\Omega\times\Omega\times\R$.

As in Theorem~\ref{T:Necc},
we do not need  $u$ being Lipschitz in $\mathbf{x}$ and bounded from below for 
the necessary condition for 
mean viability.
\end{remark}

\begin{theorem}\label{T:VS:sub}
If $V^S$ is u.s.c, then it is a quasi-contingent subsolution of \eqref{E:mv_super:HJB}.
\end{theorem}

\begin{proof}
Let 
$U:=-V^S$ and 
$\widehat{U}(t,\mathbf{x},y):=U(t,\mathbf{x})-y$.
Fix $a\in\mathcal{A}$.
By the dynamic programming principle (see, e.g., Theorem~3.5 in \cite{ElKarouiTanII}),
$V^S(t,\mathbf{x})\le \Mean[V^S(s,X^{t,\mathbf{x},\mathring{\mathbf{x}},a})]$. Thus 
${\color{black}\Mean[U(s,X^{t,\mathbf{x},\mathring{\mathbf{x}},a}]\le y}$ for every
$(t,\mathbf{x},\mathring{\mathbf{x}})\in[0,T]\times\Omega\times\Omega$, 
$y\ge U(t,\mathbf{x})$, and 
$s\in [t,T]$,
i.e.,
$(\widehat{U},K)$ is mean viable for \eqref{E0:mv:CSDE:sub}.
Hence, by Re\-mark~\ref{R:Necc:Suff}, we have  
$\mathcal{E}^a_+(t,\mathbf{x},\mathring{\mathbf{x}})\in
\mathcal{QTS}_{\widehat{U},K}(t,\mathbf{x},y)$.
Invoking Lemma~\ref{L:D12:EinQTS}  concludes the proof.
\end{proof}

\begin{theorem}\label{T:Comp:Sub}
Let $v=v(t,\mathbf{x})$ be a  quasi-contingent subsolution of \eqref{E:mv_super:HJB}
that is Lipschitz in $\mathbf{x}$ uniformly in $t$. 
Then  $v\le V^S$.
\end{theorem}
\begin{proof}
Let $u:=-v$ and  $\widehat{u}(t,\mathbf{x},y):=u(t,\mathbf{x})-y$.
Fix $a\in\mathcal{A}$. 
Note that
 \begin{align*}
   D^{1,2}_\downarrow v(t,\mathbf{x})\,(1,(b,\sigma)(\cdot,\mathbf{x}_{\cdot\wedge t},a^{t,\mathring{\mathbf{x}}}))\ge 0
 \Longleftrightarrow
 D^{1,2}_\uparrow u(t,\mathbf{x})\,(\mathcal{E}^a_+(t,\mathbf{x},\mathring{\mathbf{x}}))\le 0.
 \end{align*}
Thus, using Remark~\ref{R:Necc:Suff} 
instead of Remark~\ref{R:Sufficient:Weaker}, 
 we can proceed as in the proof of Theorem~\ref{T:Super:Com}  to
 deduce that 
  $\Mean \left[\widehat{u}(T,X^{t,\mathbf{x},\mathring{\mathbf{x}}},u(t,\mathbf{x}))\right]\le 0.$ Hence,  
 \begin{align*}
 \Mean\left[h(X^{t,\mathbf{x},{\color{black}\mathring{\mathbf{x}},}a})\right]\ge \Mean \left[v(T,X^{t,\mathbf{x},\mathring{\mathbf{x}},a})\right]\ge v(t,\mathbf{x}).
 \end{align*}
  {\color{black} Finally, thanks to the proof of  Proposition~4~(i) in \cite{ClaisseEtAl16}, we obtain}
  \begin{align*}
  V^S(t,\mathbf{x}){\color{black}=\inf_{a\in\mathcal{A}} 
  \int_{\Omega} \Mean\left[h(X^{t,\mathbf{x},\mathring{\mathbf{x}},a})\right]\,\Prob(d\mathring{\mathbf{x}})}\ge v(t,\mathbf{x}).
  \end{align*}
This concludes the proof.
\end{proof}

\subsection{Comparison}
The next result follows  directly from
 Theorems~\ref{T:Super:Com} and \ref{T:Comp:Sub}.
\begin{theorem}\label{T:comparison}
Let $v_-=v_-(t,\mathbf{x})$ be a quasi-contingent sub- and $v_+=v_+(t,\mathbf{x})$ be a
quasi-contingent supersolution of  \eqref{E:mv_super:HJB}. 
Suppose that  $v_-$ and $v_+$ are Lipschitz in $\mathbf{x}$ uniformly in $t$.
Then $v_-\le v_+$.
\end{theorem}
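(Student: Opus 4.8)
The plan is simply to chain together the two one-sided comparison results that have just been established, so the argument is essentially a bookkeeping check of hypotheses. First I would record that, by the very definition of a quasi-contingent subsolution, $v_-$ is u.s.c.\ and bounded from above; combined with the hypothesis that $v_-$ is Lipschitz in $\mathbf{x}$ uniformly in $t$, this places $v_-$ exactly under the assumptions of Theorem~\ref{T:Comp:Sub}, which gives $v_-\le V^S$ on $[0,T]\times\Omega$.

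Next, by the definition of a quasi-contingent supersolution, $v_+$ is l.s.c.\ and bounded from below, and by hypothesis it is Lipschitz in $\mathbf{x}$ uniformly in $t$; hence Theorem~\ref{T:Super:Com} applies to $v_+$ and yields $V^S\le v_+$ on $[0,T]\times\Omega$. Combining the two pointwise inequalities gives $v_-\le V^S\le v_+$ on $[0,T]\times\Omega$, which is the assertion.

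I do not expect any genuine obstacle here. The only point worth verifying is that the structural requirements appearing in Theorems~\ref{T:Comp:Sub} and \ref{T:Super:Com} (upper/lower semicontinuity and boundedness from above/below) are already built into the respective notions of quasi-contingent sub- and supersolution, so that the Lipschitz hypothesis stated in the present theorem is precisely what remains to be assumed in order to invoke them. In particular, $V^S$ is well defined and finite (indeed bounded, by (A2)), so the chain of inequalities is meaningful, and no semicontinuity of $V^S$ itself is needed since neither Theorem~\ref{T:Comp:Sub} nor Theorem~\ref{T:Super:Com} requires it.
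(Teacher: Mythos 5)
Your proposal is correct and matches the paper's proof exactly: the result is obtained by applying Theorem~\ref{T:Comp:Sub} to $v_-$ (giving $v_-\le V^S$) and Theorem~\ref{T:Super:Com} to $v_+$ (giving $V^S\le v_+$), then chaining the two inequalities. Your hypothesis check is also accurate — semicontinuity and boundedness are built into the definitions of quasi-contingent sub/supersolution, so the Lipschitz assumption is the only additional hypothesis needed.
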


{\color{black}
\subsection{Quasi-contingent solutions}

\begin{definition}
We call $v$ a \emph{quasi-contingent solution} of   \eqref{E:mv_super:HJB} if
$v$ is both, a quasi-contingent sub- and supersolution of   \eqref{E:mv_super:HJB}.
\end{definition}

\begin{theorem}
Let $h$ be Lipschitz. Then the value function
 $V^S=V^S(t,\mathbf{x})$ is the unique quasi-contingent solution of  \eqref{E:mv_super:HJB} that is Lipschitz in $\mathbf{x}$ uniformly in $t$.
\end{theorem}
\begin{proof}
First note, that $V^S=V^S(t,\mathbf{x})$ is continuous and Lipschitz in $\mathbf{x}$ uniformly in $t$ (see, e.g., Proposition~2.6 in \cite{cosso21v2path}).
Thus, by Theorems~\ref{T:VS:super} and \ref{T:VS:sub}, $V^S$ is a quasi-contingent solution of 
 \eqref{E:mv_super:HJB}. Theorem~\ref{T:comparison} yields uniqueness.
\end{proof}


\section{Classical  and viscosity solutions of HJB equations}\label{S:ClassicalAndViscosity}
For the sake of notational simplicity, 
 let $d=1$ in this section.  All results hold for higher dimensions subject to obvious modifications.


Given $(t,\mathbf{x})\in [0,T)\times\Omega$ and
$(b,{\sigma})\in\mathbb{L}^2(\F^t,\Prob,\R)\times\mathbb{L}^2(\F^t,\Prob,\R)$,
recall $X^{t,\mathbf{x};b,\sigma;p,q}$ from \eqref{E:D:QTS:X} and  
define $X^{t,\mathbf{x};b,\sigma}:=X^{t,\mathbf{x};b,\sigma;0,0}$, i.e.,
\begin{equation}\label{E:Xbsigma}
\begin{split}
dX^{t,\mathbf{x};b,\sigma}_s&=b_s\,ds+\sigma_s\,dW_s\quad\text{on $(t,T]$, $\Prob$-a.s.,}\\
X^{t,\mathbf{x};b,\sigma}_s&=\mathbf{x}_s\quad\text{on $[0,t]$}.
\end{split}
\end{equation}

\subsection{Path derivatives and path differential operators}
A functional It\^o calculus with first and second order path derivatives was introduced in  \cite{dupirefunctional}.
In contrast to  \cite{dupirefunctional},  we use an implicit definition for our path derivatives (adapted from \cite{ETZ_I}).
\begin{definition}
Let $t\in [0,T)$. The space $C^{1,2}_b([t,T]\times\Omega)$ consists of all bounded and continuous
functions $\varphi:[t,T]\times\Omega\to\R$ for which there are bounded and continuous
functions $\partial_t \varphi$, $\partial_{\mathbf{x}} \varphi$, $\partial^2_{\mathbf{x}\mathbf{x}}\varphi:[t,T]\times\Omega\to\R$ 
such that, for all $\mathbf{x}\in\Omega$, all $(b,{\sigma})\in\mathbb{L}^2(\F^t,\Prob,\R)\times\mathbb{L}^2(\F^t,\Prob,\R)$ and all $t_1$, $t_2\in [t,T]$ with $t_1<t_2$,
\begin{equation}\label{E:functionalIto}
\begin{split}
\varphi(t_2,X)-\varphi(t_1,X)&=\int_{t_1}^{t_2} \partial_t \varphi(s,X)+b_s\,\partial_{\mathbf{x}}\varphi(s,X)+
\frac{1}{2}\sigma^2_s\,\partial^2_{\mathbf{xx}}\varphi(s,X)\,ds\\ &\qquad+
\int_{t_1}^{t_2}\sigma_s\,\partial_{\mathbf{x}}\varphi(s,X)\,dW_s,\text{ $\Prob$-a.s.,}
\end{split}
\end{equation}
where $X=X^{t,\mathbf{x};b,\sigma}$  (see \eqref{E:Xbsigma}).
\end{definition}

\begin{remark}
The \emph{path derivatives} $\partial_t \varphi$,  $\partial_{\mathbf{x}} \varphi$, and $\partial^2_{\mathbf{x}\mathbf{x}}\varphi$
 of any function $\varphi$ in $C^{1,2}_b([t,T]\times\Omega)$  are uniquely determined (see \cite{ETZ_I } for details).
\end{remark}

\begin{definition}
Let $\varphi\in C^{1,2}_b([0,T]\times\Omega)$ and $(t,\mathbf{x})\in [0,T)\times\Omega$.
 For $a\in A$, define  
\begin{align}\label{E:La1}
\mathcal{L}^a\varphi (t,\mathbf{x}):=\partial_t \varphi(t,\mathbf{x})+
b(t,\mathbf{x},a)\,\partial_{\mathbf{x}} \varphi(t,\mathbf{x})+\frac{1}{2}\sigma^2(t,\mathbf{x},a)\,\partial^2_{\mathbf{xx}}\varphi(t,\mathbf{x}).
\end{align}
For  $a\in \mathcal{A}$, define, with slight abuse of notation,\footnote{
There should be no danger confusion because if $a\in\mathcal{A}$ is constant after time $t$, 
say $a(s,\omega)=\tilde{a}\in A$ for each $(s,\omega)\in [t,T]\times\Omega$, then
$\mathcal{L}^a \varphi(t,\mathbf{x})$ in the sense of \eqref{E:La2} coincides with
$\mathcal{L}^{\tilde{a}}\varphi(t,\mathbf{x})$ in the sense of \eqref{E:La1}. Here, it is assumed
that $b(\cdot,\cdot,\cdot)$ and $\sigma(\cdot,\cdot,\cdot)$ from \eqref{E:Data} are continuous.
}
\begin{align}\label{E:La2}
\mathcal{L}^a\varphi (t,\mathbf{x}):=
\varlimsup_{\substack{\delta\downarrow 0,\\ 
(p,q)\to (0,0)}}
 \frac{1}{\delta}\Mean\left[
\int_t^{t+\delta} \mathcal{L}^{a(s)}\varphi(s,X^{p,q})\,ds
\right],
\end{align}
where 
$dX^{p,q}_s=[b(s,\mathbf{x}_{\cdot\wedge t},a(s))+p_s]\,ds+[\sigma(s,\mathbf{x}_{\cdot\wedge t},a(s))+q_s]\,dW_s$ on $[t,T]$
with $X^{p,q}_{\cdot\wedge t}=\mathbf{x}_{\cdot\wedge t}$ and the limit superior in \eqref{E:La2} is
to be understood in the sense of the right-hand side of \eqref{E:D:hypo}.
\end{definition}

\subsection{Classical solutions}
\begin{definition}
 A function $u\in C^{1,2}_b([0,T]\times\Omega)$ 
 is a \emph{classical solution} 
  of \eqref{E:mv_super:HJB} if $u(T,\cdot)=h $  
   and
  $\inf_{a\in A}\mathcal{L}^a v(t,\mathbf{x}) = 0$  
   for all
  $(t,\mathbf{x})\in [0,T)\times\Omega$. 
\end{definition}


\begin{theorem}\label{T:Classical:Contingent}
Let 
the data from \eqref{E:Data} be continuous.  
Then
classical solutions of \eqref{E:mv_super:HJB} are quasi-contingent solutions of \eqref{E:mv_super:HJB}.
\end{theorem}

\begin{proof}
Let $u$ be a classical solution of \eqref{E:mv_super:HJB}.  Fix $(t,\mathbf{x})\in [0,T)\times\Omega$. 
First, we establish the quasi-contingent supersolution property. 
To this end, we use 
\begin{equation}\label{E:tE+}
\begin{split}
&\mathcal{E}^\circ_+(t,\mathbf{x}):=
\{({b},{\sigma})\in\mathbb{L}^2(\F^t,\Prob,\R)\times\mathbb{L}^2(\F^t,\Prob,\R):\,\exists a\in A:\\
&\qquad
 (b,\sigma)(s,\omega)=(b,\sigma)(s,\mathbf{x}_{\cdot\wedge t},a)
 \quad\text{$dt\times d\Prob$-a.e.~on $[t,T]\times\Omega$}\}. 
 \end{split}
 \end{equation}
 Recall \eqref{E:Xbsigma}. Then, by \eqref{E:functionalIto},
\begin{align*}
D^{1,2}_\uparrow u(t,\mathbf{x})\,(\mathcal{E}_+(t,\mathbf{x})) &\le
\sup_{\eps>0} \inf_{\substack{(b,\sigma)\in\mathcal{E}^\circ_+(t,\mathbf{x}),\\\delta\in (0,\eps\wedge(T-t))}}
\Mean\left[
\frac{u(t+\delta,X^{t,\mathbf{x};b,\sigma})-u(t,\mathbf{x})}{\delta}
\right]\\
&\le \inf_{(b,\sigma)\in\mathcal{E}^\circ_+(t,\mathbf{x})}\varliminf_{\delta\downarrow 0}
\Mean\left[
\frac{u(t+\delta,X^{t,\mathbf{x};b,\sigma})-u(t,\mathbf{x})}{\delta}
\right]\\
&= \inf_{a\in A} \mathcal{L}^a u(t,\mathbf{x})=0,
\end{align*}
i.e., $u$ is a quasi-contingent supersolution of   \eqref{E:mv_super:HJB}.
Next, we establish the quasi-contingent subsolution property.
 Note that, for each $a\in\mathcal{A}^t$, 
 \eqref{E:functionalIto} yields
\begin{align*}
 D^{1,2}_\downarrow u(t,\mathbf{x})\,(1,(b,\sigma)(\cdot,\mathbf{x}_{\cdot\wedge t},a))
& \ge \varlimsup_{\delta\downarrow 0}\Mean\left[\frac{
 u(t+\delta,X^{t,\mathbf{x};(b,\sigma)(\cdot,\mathbf{x}_{\cdot\wedge t},a)})-u(t,\mathbf{x})
 }{\delta} \right]\\
 &=\varlimsup_{\delta\downarrow 0}\Mean\left[\frac{
 \int_t^{t+\delta} \mathcal{L}^{a(s)} u(s,X^{t,\mathbf{x};(b,\sigma)(\cdot,\mathbf{x}_{\cdot\wedge t},a)})\,ds
 }{\delta} \right]\\
 &\ge \varlimsup_{\delta\downarrow 0}\Mean\left[\frac{
 \int_t^{t+\delta} \inf_{\tilde{a}\in A} \mathcal{L}^{\tilde{a}} 
 u(s,X^{t,\mathbf{x};(b,\sigma)(\cdot,\mathbf{x}_{\cdot\wedge t},a)})\,ds
 }{\delta} \right]\\
&= \inf_{\tilde{a}\in A} \mathcal{L}^{\tilde{a}} u(t,\mathbf{x})=0,
\end{align*}
i.e., $u$ is a quasi-contingent subsolution of   \eqref{E:mv_super:HJB}, which concludes the proof.
\end{proof}



\subsection{Viscosity solutions}\label{SS:ViscSol}
The notions we employ here are in the framework of the notions of viscosity solutions in \cite{cosso18,EKTZ11,ETZ_I}
for path-dependent PDEs that use tangency in mean as opposed to  the usually used pointwise tangency.
 


First, we  introduce  test function spaces  for our notion of viscosity solutions.  
They are very similar to the spaces in Remark~6.7 of
\cite{cosso18}.
  For $u:[0,T]\times\Omega\to\R$, $(t,\mathbf{x})\in [0,T)\times\Omega$, and
 $\mathcal{E}\subset
\mathbb{L}^2(\F^t,\Prob,\R)\times\mathbb{L}^2(\F^t,\Prob,\R)$,
 put
\begin{equation*}
\begin{split}
\overline{\Phi}^{\mathcal{E}}u(t,\mathbf{x})&:=
\bigl\{\varphi\in C^{1,2}_b([t,T]\times\Omega):
\exists\eps>0:\forall \delta\in (0,\eps]:\forall (b,\sigma;p,q)\in\mathcal{E}\times\mathbf{O}_{\eps,\delta}^t:\\
&\qquad\qquad
 0=(\varphi-u)(t,\mathbf{x})   \ge\Mean\left[ 
 (\varphi-u)(t+\delta,X^{t,\mathbf{x};b,\sigma;p,q})
 \right]\bigr\},\\
 \underline{\Phi}^{\mathcal{E}}u(t,\mathbf{x})&:=
 \bigl\{\varphi\in C^{1,2}_b([t,T]\times\Omega):
\exists\eps>0:\forall \delta\in (0,\eps]:
\forall (b,\sigma;p,q)\in\mathcal{E}\times\mathbf{O}_{\eps,\delta}^t:\\
&\qquad\qquad
 0=(\varphi-u)(t,\mathbf{x})\le \Mean\left[ 
 (\varphi-u)(t+\delta,X^{t,\mathbf{x};b,\sigma;p,q})
 \right]\bigr\},
\end{split}
\end{equation*}
where $X^{t,\mathbf{x};b,\sigma;p,q}$ has been defined in \eqref{E:D:QTS:X} and 
\begin{align*}
\mathbf{O}_{\eps,\delta}^t:=
\left\{(p,q)\in\mathbb{L}^0(\F^t,\R)^2:\,
\norm{\bfone_{\llb t,t+\delta\rrb}.p}^2_{\mathbb{L}^2(\F^t,\Prob,\R)}+
\norm{\bfone_{\llb t,t+\delta\rrb}.q}^2_{\mathbb{L}^2(\F^t,\Prob,\R)}\le\eps\delta\right\}.
\end{align*}


Next, recall $\mathcal{E}_+(t,\mathbf{x})$ from \eqref{E:E+}
and, given $a\in\mathcal{A}$, put 
$\mathcal{E}_{+}^a(t,\mathbf{x}):=\{(b,\sigma)(\cdot,\mathbf{x}_{\cdot\wedge t},a)\}$.

\begin{definition} Consider a function $u:[0,T]\times\Omega\to\R$.

(i) We call  $u$
 a \emph{viscosity supersolution} of \eqref{E:mv_super:HJB} if $u$ is l.s.c.~and bounded from below, $u(T,\cdot)\ge h$, and,
 for all $(t,\mathbf{x})\in [0,T)\times\Omega$ and 
 $\varphi\in\overline{\Phi}^{\mathcal{E}_+(t,\mathbf{x})}\,u(t,\mathbf{x})$, 
\begin{align*}
\inf_{a\in A} \mathcal{L}^a \varphi(t,\mathbf{x})\le 0.
\end{align*}

(ii) We call  $u$ a \emph{viscosity subsolution} of \eqref{E:mv_super:HJB} if $u$ is u.s.c.~and bounded from above, $u(T,\cdot)\le h$, and,
 for all $(t,\mathbf{x})\in [0,T)\times\Omega$, 
  $a\in\mathcal{A}^t$, and $\varphi\in\underline{\Phi}^{\mathcal{E}_+^a(t,\mathbf{x})}\,u(t,\mathbf{x})$, 
\begin{align*}
 \mathcal{L}^a \varphi(t,\mathbf{x})\ge 0.  
\end{align*}

(iii) We call $u$ a \emph{viscosity solution} of \eqref{E:mv_super:HJB} if $u$ is both,
a viscosity sub- and supersolution of \eqref{E:mv_super:HJB}.
\end{definition}

\begin{remark}
Our definition of viscosity supersolution is nearly identical with the one in  Definition~6.8 in \cite{cosso18}.
The viscosity subsolution case is slightly different.
A function $u$ is a viscosity subsolution of  $\inf_{a\in A} \mathcal{L}^a u(t,\mathbf{x})=0$ in our sense if it 
is a viscosity subsolution of $\mathcal{L}^a  u(t,\mathbf{x})=0$ for every $a\in\mathcal{A}^t$
in the sense of Definition~4.6 in \cite{cosso18} (up to minor technical differences).
\end{remark}


\begin{theorem} 
Let the data from \eqref{E:Data} be continuous.  
Let $u:[0,T]\times\Omega\to\R$ be a
classical solution of \eqref{E:mv_super:HJB}.  Then $u$ is a  viscosity solution of \eqref{E:mv_super:HJB}.
\end{theorem}

\begin{proof}
Fix $(t,\mathbf{x})\in [0,T)\times\Omega$.
First, we establish the viscosity supersolution property. 
Fix  $(b,\sigma)\in \mathcal{E}^\circ_+(t,\mathbf{x})$ (see \eqref{E:tE+}) 
with corresponding 
$a\in A$, recall \eqref{E:Xbsigma}, and 
let $\varphi\in\overline{\Phi}^{\mathcal{E}_+(t,\mathbf{x})}\,u(t,\mathbf{x})$,
i.e., there is an $\eps>0$ such that, for all $\delta\in (0,\eps]$, 
\begin{align*}
\Mean\left[\varphi(t+\delta,X^{t,\mathbf{x};b,\sigma})-\varphi(t,\mathbf{x})\right]\le
\Mean\left[u(t+\delta,X^{t,\mathbf{x};b,\sigma})-u(t,\mathbf{x})\right].
\end{align*}
 Dividing 
this inequality  
 by $\delta$ and letting $\delta\downarrow 0$ yields
$\mathcal{L}^a\varphi(t,\mathbf{x})\le \mathcal{L}^a u(t,\mathbf{x})$, i.e.,
 $\inf_{\tilde{a}\in A}\mathcal{L}^{\tilde{a}}\varphi(t,\mathbf{x})\le 
\inf_{\tilde{a}\in A}\mathcal{L}^{\tilde{a}} u(t,\mathbf{x})=0$.

Next, we establish the viscosity subsolution property. To this end,
fix $a\in\mathcal{A}^t$ and $\varphi\in\underline{\Phi}^{\mathcal{E}_+^a(t,\mathbf{x})}\,u(t,\mathbf{x})$. Then
$(b,\sigma)\in  \mathcal{E}_+^a(t,\mathbf{x})$ implies
\begin{align*}
\Mean\left[\varphi(t+\delta,X^{t,\mathbf{x};b,\sigma})-\varphi(t,\mathbf{x})\right]\ge
\Mean\left[u(t+\delta,X^{t,\mathbf{x};b,\sigma})-u(t,\mathbf{x})\right].
\end{align*}
Hence
$\mathcal{L}^a\varphi (t,\mathbf{x})\ge
\varlimsup_{\delta\downarrow 0} \frac{1}{\delta}\Mean\left[
\int_t^{t+\delta} \mathcal{L}^{a(s)}u(s,X^{t,\mathbf{x};b,\sigma})\,ds
\right]\ge 0$.
\end{proof}

\begin{theorem}\label{T:viscositysuper}
If  $u:[0,T]\times\Omega\to\R$ is a viscosity supersolution of \eqref{E:mv_super:HJB}, then it is a quasi-contingent supersolution of \eqref{E:mv_super:HJB}.
\end{theorem}

\begin{proof}
Assume 
 $u$ is not a quasi-contingent supersolution of  \eqref{E:mv_super:HJB}. Then there is a pair
 $(t,\mathbf{x})\in[0,T)\times\Omega$ such that $D^{1,2}_\uparrow u(t,\mathbf{x})(\mathcal{E}_+(t,\mathbf{x}))>c$
 for some constant $c>0$, i.e., there is an $\eps>0$ such that, for all $\delta\in (0,\eps]$,
  $(b,\sigma)\in\mathcal{E}_+(t,\mathbf{x})$,
 and $(p,q)\in\mathbf{O}_{\eps,\delta}^t$, 
 \begin{align*}
 \Mean[u(t+\delta,X^{t,\mathbf{x};b,\sigma;p,q})-u(t,\mathbf{x})]>c\delta.
 \end{align*}
 Hence the function
 $\varphi:[t,T]\times\Omega\to\R$ defined by $\varphi(s,\omega):=u(t,\mathbf{x})+c\cdot(s-t)$
 belongs to $\overline{\Phi}^{\mathcal{E}_+(t,\mathbf{x})}\,u(t,\mathbf{x})$. However,
 $\inf_{a\in A}\mathcal{L}^a \varphi(t,\mathbf{x})=\partial_t \varphi(t,\mathbf{x})=c>0$, which is a contradiction to $u$ being a viscosity
 supersolution of  \eqref{E:mv_super:HJB}.
\end{proof}

\begin{theorem}\label{T:viscositysub}
If  $u:[0,T]\times\Omega\to\R$ is a viscosity subsolution of \eqref{E:mv_super:HJB}, then it is a quasi-contingent subsolution of \eqref{E:mv_super:HJB}.
\end{theorem}

\begin{proof}
Assume 
 $u$ is not a quasi-contingent subsolution of \eqref{E:mv_super:HJB}.
Then there are 
 $(t,\mathbf{x})\in [0,T)\times\Omega$ and 
 $a\in\mathcal{A}^t$ such that
$D^{1,2}_\downarrow u(t,\mathbf{x})(1,(b,\sigma)(\cdot,\mathbf{x}_{\cdot\wedge t},a))<-c$
for some 
 $c>0$, i.e.,  
 there is an $\eps>0$ such that, for all $\delta\in (0,\eps]$
and $(p,q)\in\mathbf{O}_{\eps,\delta}^t$,
\begin{align*}
\Mean[
 u(t+\delta,X^{t,\mathbf{x};(b,\sigma)(\cdot,\mathbf{x}_{\cdot\wedge t},a);p,q})-u(t,\mathbf{x})
 ]<-c\delta.
\end{align*}
Define $\varphi(s,\omega):=u(t,\mathbf{x})-c\cdot(s-t)$, $(s,\omega)\in [t,T]\times\Omega$.
Then $\varphi\in\underline{\Phi}^{\mathcal{E}_+^a(t,\mathbf{x})}\,u(t,\mathbf{x})$
 but $\mathcal{L}^a \varphi(t,\mathbf{x})=-c<0$.
This is a contradiction to $u$ being a viscosity subsolution.
\end{proof}

\begin{remark}
It would be very interesting to obtain counterparts of Theorems~\ref{T:viscositysuper} and \ref{T:viscositysub}
with the usual ``Crandall--Lions" notion of viscosity solutions 
(as used in
\cite{CossoRusso22,cosso21v2path,zhou2020viscosity} for path-dependent PDEs).
While equivalence results between viscosity solutions and contingent solutions (or similar nonsmooth solutions)
  have been established in the first-order case for standard PDEs on finite-dimensional spaces
 (see, e.g., \cite{BardiCapuzzoDolcetta,Frankowska93SICON,LionsSouganidis85SICON,SubbotinBook}),
 for PDEs on Hilbert space in \cite{ClarkeLedyaev94TAMS},
for PDEs on Wasserstein space in \cite{BadreddineFrankowska22}, and for path-dependent PDEs in 
\cite{GP23JFA}, obtaining corresponding results in the second-order case remains a challenging largely open problem.
\end{remark}

\subsection{Comparison for viscosity solutions}
The following  result 
is a direct consequence of Theorems~\ref{T:comparison},
\ref{T:viscositysuper}, and \ref{T:viscositysub}.

\begin{theorem}\label{T:viscosity:comparison}
Let $u_-=u_-(t,\mathbf{x})$ be a viscosity sub- and $u_+=u_+(t,\mathbf{x})$ be
a viscosity supersolution of \eqref{E:mv_super:HJB}. If
$u_-$ and $u_+$ are Lipschitz in $\mathbf{x}$ uniformly in $t$, then $u_-\le u_+$.
\end{theorem}

\subsection{Existence and uniqueness for viscosity solutions}
\begin{theorem}\label{T:EU:Viscosity} Let 
the data from \eqref{E:Data} be continuous. Moreover, assume that $h$ is Lipschitz and 
$(t,\mathbf{x},a)\mapsto (b,\sigma)(t,\mathbf{x},a)$ is uniformly continuous in $t$
uniformly in $(\mathbf{x},a)$. Then the value function $V^S$ is the unique viscosity solution of \eqref{E:mv_super:HJB}.
\end{theorem}

\begin{proof}
We  sketch parts of  the proof for existence, as the arguments are essentially the same as in the proof of Theorem~3.4
in \cite{cosso21v2path} (however in  \cite{cosso21v2path} a maximization problem is studied in contrast to our
minimization problem).
First  note that 
 $V^S=V^S(t,\mathbf{x})$ is continuous and Lipschitz in $\mathbf{x}$ uniformly in $t$
 by Proposition~2.6 in \cite{cosso21v2path}.

Next, fix $(t,\mathbf{x})\in [0,T)$. We establish now the viscosity supersolution property.
To this end, let  $\varphi\in\overline{\Phi}^{\mathcal{E}_+(t,\mathbf{x})}\,V^S(t,\mathbf{x})$
with corresponding $\eps>0$. Note that, for every $\delta>0$, the dynamic programming principle
provides us with an $a^\delta\in\mathcal{A}^t$ 
(cf.~the proof of Theorem~\ref{T:VS:super}) such that
\begin{align}\label{E:ViscSuperVeric}
V^S(t,\mathbf{x})+\delta^2&\ge \Mean\left[V^S(t+\delta,X^{t,\mathbf{x},a^\delta})\right]
=\Mean\left[V^S(t+\delta,X^{t,\mathbf{x};b^\delta,\sigma^\delta;p^\delta,q^\delta})\right],
\end{align}
where
\begin{align*}
(b^\delta_s,\sigma^\delta_s)&:=(b,\sigma)(s,\mathbf{x}_{\cdot\wedge t},a^\delta_s),\\
(p^\delta_s,q^\delta_s)&:=(b(s,X^{t,\mathbf{x},a^\delta},a^\delta_s)-b^\delta_s,\,
\sigma(s,X^{t,\mathbf{x},a^\delta},a^\delta_s)-\sigma^\delta_s).
\end{align*}
By the proof of  Theorem~\ref{T:Necc}, there is a constant $C>0$ independent from $\delta$ such that whenever
$\delta< (2CL_b^4)^{-1}\eps\wedge \eps\wedge (T-t)$, we have $(p^\delta,q^\delta)\in\mathbf{O}^t_{\eps,\delta}$.
Thus we can assume from now on that $\delta$ is sufficiently small such that $\varphi$ satisfies
\begin{align*}
\varphi(t,\mathbf{x})+\delta^2&\ge \Mean\left[\varphi(t+\delta,X^{t,\mathbf{x},a^\delta})\right]
=\Mean\left[\varphi(t+\delta,X^{t,\mathbf{x};b^\delta,\sigma^\delta;p^\delta,q^\delta})\right].
\end{align*}
This is possible thanks to $\varphi\in\overline{\Phi}^{\mathcal{E}_+(t,\mathbf{x})}\,V^S(t,\mathbf{x})$ and 
\eqref{E:ViscSuperVeric}.
Now we can proceed as in the proof of Theorem~3.4
in \cite{cosso21v2path} to  deduce that $\inf_{a\in A}\mathcal{L}^a\varphi(t,\mathbf{x})\le 0$.

Next, we deal with the viscosity subsolution property. 
Let $a\in\mathcal{A}^t$ 
 and $\varphi\in\underline{\Phi}^{\mathcal{E}_+^a(t,\mathbf{x})}\,V^S(t,\mathbf{x})$ with corresponding $\eps>0$.
 By the dynamic programming principle,
 \begin{align}\label{E:ViscSubVeric}
 V^S(t,\mathbf{x})\le \Mean\left[V^S(t+\delta,X^{t,\mathbf{x},a})\right]=\Mean\left[V^S(t+\delta,X^{t,\mathbf{x};b,\sigma;p,q})\right]
 \end{align}
 for all $\delta\in(0,T-t]$.
Here, $(b,\sigma)\in\mathcal{E}_+^a(t,\mathbf{x})$ and
  $(p,q)$ is defined as in the proof of Theorem~\ref{T:Necc}, which also yields,
    for all sufficiently small $\delta>0$,
 $(p,q)\in\mathbf{O}^t_{\eps,\delta}$
  and thus, 
  by \eqref{E:ViscSubVeric}, we have
 $ \varphi(t,\mathbf{x})\le 
  \Mean\left[\varphi(t+\delta,X^{t,\mathbf{x};b,\sigma;p,q})\right]$.
Hence $\mathcal{L}^a\varphi (t,\mathbf{x})\ge 0$.

Finally, Theorem~\ref{T:viscosity:comparison} yields uniqueness. This concludes the proof.
\end{proof}

}
\section{The extension lemma}\label{Section:Step3}

Purpose of this section is to prove the 
extension lemma (Lemma~\ref{L:Step3} below), which is needed in Step~3 of the proof of 
Proposition~\ref{P:epsApproxSol}. 

To deal with measurability issues and since we need to use regular conditional probability distributions (r.c.p.d.),
we will work in a weak formulation, i.e., controls will be replaced by probability measures. An appropriate setting
is introduced in Section~\ref{S:Extension:Setting} and the connection between controls and related probability measures
is treated in Section~\ref{S:Extension:Connection}. In Section~\ref{S:Extension:Existence}, we obtain
measurability of controls coming from mean quasi-tangential conditions with respect to initial data. This property is crucial.
Finally, Section~\ref{S:Extension:Lemma} contains statement and proof of the extension lemma.

\subsection{Setting}\label{S:Extension:Setting}
The presentation 
is adapted from \cite{ElKarouiTanII}.

Given a topological space $E$,
we define the
 spaces
\begin{align*}
\mathbb{M}(\R_+\times E)&:=\{\text{all $\sigma$-finite measures on $\mathcal{B}(\R_+\times E)$}\}\text{ and}\\
\mathbb{M}_E&:=\{m\in\mathbb{M}(\R_+\times E):\, m(dt,de)=m(t,de)\,dt\},
\end{align*}
i.e., the marginal distribution on $\R_+$ of any  
element of $\mathbb{M}_E$ equals the Le\-besgue measure.
Note that we 
use the same notation for an element $m(dt,de)$ of $\mathbb{M}_E$ and its kernel $m(t,de)$.

Our extended canonical space is
\begin{align*}
\tilde{\Omega}:=\Omega\times\Omega\times\mathbb{M}_A\times\mathbb{M}_{\R^d}\times\mathbb{M}_{\R^{d\times d}}
\end{align*}
with canonical process $(\tilde{W},\tilde{X},\tilde{M}_A,\tilde{M}_P,\tilde{M}_Q)$ defined by
$\tilde{W}_t(\tilde{\omega}):=\mathring{\omega}_t$, $\tilde{X}_t(\tilde{\omega}):=\omega_t$, 
$\tilde{M}_A(\tilde{\omega}):=m_A$, $\tilde{M}_P(\tilde{\omega}):=m_P$, and $\tilde{M}_Q
(\tilde{\omega}):=m_Q$
for every $t\ge 0$ and every $\tilde{\omega}=(\mathring{\omega},\omega,m_A,m_P,m_Q)\in\tilde{\Omega}$. 
The (raw) filtration generated by this canonical process is denoted by $\tilde{\F}=(\tilde{\cF_t})_{t\ge 0}$.
Given $t\ge 0$, we also use the shifted canonical process 
$(\tilde{W}^t,\tilde{M}_A^t,\tilde{M}_P^t,\tilde{M}_Q^t)$ defined by $\tilde{W}^t_s:=
\tilde{W}_{s+t}-\tilde{W}_t$, $\tilde{M}_A^t(s,da)\,ds:=\tilde{M}_A(t+s,da)\,ds$,
$\tilde{M}_P^t(s,dp)\,ds:=\tilde{M}_P(t+s,dp)\,ds$, and
$\tilde{M}_Q^t(s,dq)\,ds:=\tilde{M}_Q(t+s,dq)\,ds$,
 $s\ge 0$.

Solutions of controlled stochastic differential equations 
are expressed as solutions of martingale problems
(quite similarly as in  \cite{HaussmannLepeltier90SICON}).
To this end,
consider functions 
$\tilde{b}:\R_+\times\Omega\times A\times\R^d\to\R^{2d}$ and
$\tilde{\sigma}:\R_+\times\Omega\times A\times\R^{d\times d}\to\R^{d\times d}\times\R^{d\times d}$ defined by
\begin{align*}
\tilde{b}(t,\mathbf{x},a,p):=(0,b(t,\mathbf{x},a)+p)^\top \text{ and }
\tilde{\sigma}(t,\mathbf{x},a,q):=\left(\begin{array}{c} I_{d\times d} \\ \sigma(t,\mathbf{x},a)+q\end{array}\right).
\end{align*}
Next, we define an infinitesimal generator $\tilde{\mathcal{L}}$ on $C^2_b(\R^{2d})$ as follows.
For each $\varphi\in C^2_b(\R^{2d})$ and each $(t,\mathbf{x},a,p,q,x)\in \R_+\times\Omega\times A\times
\R^d\times\R^{d\times d}\times\R^{2d}$, put
\begin{align*}
(\tilde{\mathcal{L}}\varphi)(t,\mathbf{x},a,p,q,x):=
\tilde{b}(t,\mathbf{x},a,p)\cdot D\varphi(x)+\frac{1}{2}(\tilde{\sigma}\tilde{\sigma}^\top)(t,\mathbf{x},a,q):D^2\varphi(x).
\end{align*}
For each $t$, $t_0\in \R_+$ and $\varphi\in C^2_b(\R^{2d})$, define a random variable $C^{t_0}_t(\varphi)$ on
$\tilde{\Omega}$ by
\begin{equation}\label{E:Ct}
\begin{split}
C^{t_0}_t(\varphi)(\tilde{\omega})&:=\Biggl[\varphi(\tilde{W}_t,\tilde{X}_t)-
\int_0^t \int_A\int_{\R^d}\int_{\R^{d\times d}} 
(\tilde{\mathcal{L}}\varphi)(s,\tilde{X}_{\cdot\wedge t_0},a,p,q,\tilde{W}_s,\tilde{X}_s) \\
&\qquad\qquad
\tilde{M}_Q(s,dq)\,\tilde{M}_P(s,dp)\,\tilde{M}_A(s,da)\,ds\Biggr](\tilde{\omega}) 
\end{split}
\end{equation}
if $\tilde{\omega}=(\mathring{\omega},\omega,\delta_{\bar{a}(s)}(da)\,ds,\delta_{\bar{p}(s)}(dp)\,ds,\delta_{\bar{q}(s)}(dq)\,ds)$ for some
Borel measurable function $(\bar{a},\bar{p},\bar{q}):\R_+\to A\times\R^d\times\R^{d\times d}$
with $\int_0^s \abs{\bar{p}^i(r)}\,dr$ and $\int_0^s \abs{\bar{q}^{i,j}(r)}\,dr$ being finite for each $i$, $j\in\{1,\ldots,d\}$ 
 and each $s\in\R_+$
 (cf.~(1.6) and Remark~1.4, both in \cite{ElKarouiTanII}),
and define $C^{t_0}_t(\varphi)$ by  $C^{t_0}_t(\varphi)(\tilde{\omega}):=0$ otherwise.
We also use the map
\begin{align*}
&\Upsilon: \mathcal{A}\times\mathbb{L}^2(\F,\Prob,\R^d)\times\mathbb{L}^2(\F,\Prob,\R^{d\times d})\to
\mathcal{P}(\Omega\times\mathbb{M}_A\times\mathbb{M}_{\R^d}\times\mathbb{M}_{\R^{d\times d}}),
\\ &\qquad\qquad (\tilde{a},\tilde{p},\tilde{q})\mapsto\Prob\circ
(W,\delta_{\tilde{a}_s(W)}(da)\,ds,\delta_{\tilde{p}_s(W)}(dp)\,ds,\delta_{\tilde{q}_s(W)}(dq)\,ds)^{-1},
\end{align*}
and its image $\Upsilon(\mathcal{A}\times\mathbb{L}^2(\F,\Prob,\R^d)\times\mathbb{L}^2(\F,\Prob,\R^{d\times d}))$,
which is a Borel set 
(Lemma~4.10 in \cite{Djete22AOP} and p.~23 in \cite{ElKarouiTanII}).
{\color{black} Recall that $\mathcal{P}(\ldots)$  and $\delta_{(\cdot)}$ were defined in Section~\ref{S:Notation}.}

\begin{definition}\label{D:strongControlRule}
 Let $(t,\mathbf{x},\mathring{\mathbf{x}})\in [0,T)\times\Omega\times\Omega$. 
 Recall $(a^\circ,p^\circ,q^\circ)$ from \eqref{E:GlobalTriple}.
 We call 
$\tilde{\Prob}\in\mathcal{P}(\tilde{\Omega})$ a \emph{strong control rule starting at
$(t,\mathbf{x},\mathring{\mathbf{x}})$} if,  
$\tilde{\Prob}$-a.s.,
\begin{align*}
&(\tilde{X},\tilde{W},\tilde{M}_A(da),\tilde{M}_{P}(dp),\tilde{M}_{Q}(dq))\vert_{[0,t]}=(\mathbf{x},\mathring{\mathbf{x}},\delta_{a^\circ}(da),\delta_{p^\circ}(dp),\delta_{q^\circ}(dq))\vert_{[0,t]},
\end{align*}
the processes $(C^t_s)(\varphi)_{t\le s\le T_1}$, 
$\varphi\in C^2_b(\R^{2d})$,  $T_1>t$,  are $(\tilde{\Prob},\tilde{\F})$-martingales, and
\begin{equation}\label{E:StrongControl:Inclusion}
\begin{split}
&\tilde{\Prob}\circ(\tilde{W}^t,\tilde{M}^t_A(s,da)\,ds,\tilde{M}^t_P(s,dp)\,ds,\tilde{M}^t_Q(s,dq)\,ds)^{-1}\\&\qquad\qquad \in
\Upsilon ( \mathcal{A}\times\mathbb{L}^2(\F,\Prob,\R^d)\times\mathbb{L}^2(\F,\Prob,\R^{d\times d})).
\end{split}
\end{equation}
The \emph{set of all strong control rules starting at $(t,\mathbf{x},\mathring{\mathbf{x}})$}
is denoted by $\tilde{\mathcal{P}}_{t,\mathbf{x},\mathring{\mathbf{x}}}$.
\end{definition}

\subsection{Connections between ``strong" controls and strong  control rules}\label{S:Extension:Connection}

For every $(t,\mathbf{x},\mathring{\mathbf{x}})\in [0,T)\times\Omega\times\Omega$
and every $(\tilde{a},\tilde{p},\tilde{q})\in
 \mathcal{A}^t\times \mathbb{L}^2(\F^t,\Prob,\R^d)\times\mathbb{L}^2(\F^t,\Prob,\R^{d\times d})$, consider
 the induced probability measure
 \begin{align}\label{E:ProbInduced}
 \Prob_{t,\mathbf{x},\mathring{\mathbf{x}};\tilde{a},\tilde{p},\tilde{q}}:=
 \Prob\circ(W^{t,\mathring{\mathbf{x}}},X^{t,\mathbf{x};\tilde{a},\tilde{p},\tilde{q}},
 \delta_{\tilde{a}_s}(da)\,ds,\delta_{\tilde{p}_s}(dp)\,ds,\delta_{\tilde{q}_s}(dq)\,ds)^{-1}
 \end{align}
 in $\mathcal{P}(\tilde{\Omega})$, where $W^{t,\mathring{\mathbf{x}}}:=\mathring{\mathbf{x}}.\bfone_{[0,t)}
 +(\mathring{\mathbf{x}}_t+W-W_t).\bfone_{[t,\infty)}$
 and
 $X^{t,\mathbf{x};\tilde{a},\tilde{p},\tilde{q}}$ is the unique 
 solution
 of 
 \begin{align}\label{E:6.2:Xapq}
 dX^{t,\mathbf{x};\tilde{a},\tilde{p},\tilde{q}}_s=[b(s,\mathbf{x}_{\cdot\wedge t},\tilde{a}_s)+\tilde{p}_s]\,ds
 +[\sigma(s,\mathbf{x}_{\cdot\wedge t},\tilde{a}_s)+\tilde{q}_s]\,dW_s,\quad\text{$\Prob$-a.s.,}
 \end{align}
 on $[t,\infty)$
 with initial condition $X^{t,\mathbf{x};\tilde{a},\tilde{p},\tilde{q}}\vert_{[0,t]}=\mathbf{x}\vert_{[0,t]}$.
 

 \begin{lemma}\label{L:Ptxxapq:Ptxx}
 $ \Prob_{t,\mathbf{x},\mathring{\mathbf{x}};\tilde{a},\tilde{p},\tilde{q}}
 \in\tilde{\mathcal{P}}_{t,\mathbf{x},\mathring{\mathbf{x}}}$.
 \end{lemma}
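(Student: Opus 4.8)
The plan is to verify, one at a time, the three defining properties of $\tilde{\mathcal{P}}_{t,\mathbf{x},\mathring{\mathbf{x}}}$ in Definition~\ref{D:strongControlRule} for the measure $\tilde\Prob:=\Prob_{t,\mathbf{x},\mathring{\mathbf{x}};\tilde a,\tilde p,\tilde q}$. The initialization on $[0,t]$ is immediate: under the pushforward \eqref{E:ProbInduced} the coordinate pair $(\tilde X,\tilde W)$ becomes $(X^{t,\mathbf{x};\tilde a,\tilde p,\tilde q},W^{t,\mathring{\mathbf{x}}})$, which equals $(\mathbf{x},\mathring{\mathbf{x}})$ on $[0,t]$ by construction, while $(\tilde M_A,\tilde M_P,\tilde M_Q)$ becomes the family of Dirac kernels of $(\tilde a,\tilde p,\tilde q)$, and these coincide with $(\delta_{a^\circ},\delta_{p^\circ},\delta_{q^\circ})$ on $\llb 0,t\rrb$ because $\tilde a\in\mathcal{A}^t$ and because of the standing convention on restrictions to $\llb 0,t\rrb$ of elements of $\mathbb{L}^2(\F^t,\Prob,\R^d)\times\mathbb{L}^2(\F^t,\Prob,\R^{d\times d})$ made in Section~\ref{S:Data}.

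For the martingale property, I would first note that, $\tilde\Prob$-a.s., the time-path $r\mapsto(\tilde a_r,\tilde p_r,\tilde q_r)$ is Borel measurable with $\int_0^s(\abs{\tilde p_r}+\abs{\tilde q_r})\,dr<\infty$ for every $s$ (by Cauchy--Schwarz, since $(\tilde p,\tilde q)\in\mathbb{L}^2(\F^t,\Prob,\R^d)\times\mathbb{L}^2(\F^t,\Prob,\R^{d\times d})$), so that \eqref{E:Ct} applies and, after transporting along \eqref{E:ProbInduced} and using $\tilde X_{\cdot\wedge t}=\mathbf{x}_{\cdot\wedge t}$, the random variable $C^t_s(\varphi)$ is carried to
\[
\varphi\bigl(W^{t,\mathring{\mathbf{x}}}_s,X^{t,\mathbf{x};\tilde a,\tilde p,\tilde q}_s\bigr)-\int_0^s(\tilde{\mathcal{L}}\varphi)\bigl(r,\mathbf{x}_{\cdot\wedge t},\tilde a_r,\tilde p_r,\tilde q_r,W^{t,\mathring{\mathbf{x}}}_r,X^{t,\mathbf{x};\tilde a,\tilde p,\tilde q}_r\bigr)\,dr.
\]
Since $dW^{t,\mathring{\mathbf{x}}}_s=dW_s$ on $[t,\infty)$, the $\R^{2d}$-valued process $(W^{t,\mathring{\mathbf{x}}},X^{t,\mathbf{x};\tilde a,\tilde p,\tilde q})$ solves on $[t,\infty)$ — by \eqref{E:6.2:Xapq} — the stochastic differential equation with drift $\tilde b(\cdot,\mathbf{x}_{\cdot\wedge t},\tilde a,\tilde p)$ and dispersion coefficient $\tilde\sigma(\cdot,\mathbf{x}_{\cdot\wedge t},\tilde a,\tilde q)$ driven by $W$. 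Itô's formula applied to $\varphi$ then yields, for $s\in[t,T_1]$,
\[
C^t_s(\varphi)-C^t_t(\varphi)=\int_t^s D\varphi\bigl(W^{t,\mathring{\mathbf{x}}}_r,X^{t,\mathbf{x};\tilde a,\tilde p,\tilde q}_r\bigr)^\top\tilde\sigma\bigl(r,\mathbf{x}_{\cdot\wedge t},\tilde a_r,\tilde q_r\bigr)\,dW_r,
\]
where $C^t_t(\varphi)$ is $\tilde{\cF}_t$-measurable (in fact $\tilde\Prob$-a.s.\ constant, a function of $(t,\mathbf{x},\mathring{\mathbf{x}})$ only). Because $\varphi\in C^2_b(\R^{2d})$, $D\varphi$ is bounded, $\sigma$ is bounded by (A2), and $\tilde q\in\mathbb{L}^2(\F^t,\Prob,\R^{d\times d})$, the integrand lies in $\mathbb{L}^2(\F,\Prob,\cdot)$, so the right-hand side is a true $(\Prob,\F)$-martingale on $[t,T_1]$. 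Since the map in \eqref{E:ProbInduced} is $\cF_s/\tilde{\cF}_s$-measurable for every $s$ and $C^t_\cdot(\varphi)$ is adapted to $\tilde{\F}$, a conditioning argument shows that $(C^t_s(\varphi))_{t\le s\le T_1}$ is also a $(\tilde\Prob,\tilde\F)$-martingale.

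For the inclusion \eqref{E:StrongControl:Inclusion}, I would invoke the Brownian shift $W^t:=W_{\cdot+t}-W_t$, which is again a $\Prob$-Brownian motion, and the canonical identification of $\F^t$-predictable/progressive processes on $[t,\infty)$ with $\F$-predictable/progressive processes on $[0,\infty)$: this produces $(\hat a,\hat p,\hat q)\in\mathcal{A}\times\mathbb{L}^2(\F,\Prob,\R^d)\times\mathbb{L}^2(\F,\Prob,\R^{d\times d})$ with $(\tilde a_{t+s},\tilde p_{t+s},\tilde q_{t+s})=(\hat a_s(W^t),\hat p_s(W^t),\hat q_s(W^t))$ for $dt\times d\Prob$-a.e.\ $(s,\omega)$. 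By \eqref{E:ProbInduced}, $\tilde\Prob\circ(\tilde W^t,\tilde M^t_A(s,da)\,ds,\tilde M^t_P(s,dp)\,ds,\tilde M^t_Q(s,dq)\,ds)^{-1}$ then equals $\Prob\circ(W^t,\delta_{\hat a_s(W^t)}(da)\,ds,\delta_{\hat p_s(W^t)}(dp)\,ds,\delta_{\hat q_s(W^t)}(dq)\,ds)^{-1}$, which, as $W^t$ has law $\Prob$, is precisely $\Upsilon(\hat a,\hat p,\hat q)$, an element of the image $\Upsilon(\mathcal{A}\times\mathbb{L}^2(\F,\Prob,\R^d)\times\mathbb{L}^2(\F,\Prob,\R^{d\times d}))$. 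I expect the main difficulty to be the bookkeeping in the martingale step, namely faithfully transporting the pathwise definition \eqref{E:Ct} through the pushforward \eqref{E:ProbInduced} and recognizing the outcome as the Itô decomposition of $\varphi$ along the $\R^{2d}$-valued solution, together with the (routine but delicate) verification in the last step that $W^t$ intertwines the $\F^t$- and $\F$-structures so that the shifted triple lands in the domain of $\Upsilon$ — exactly the kind of measurability issue that the weak formulation of Section~\ref{S:Extension:Setting} is designed to absorb.
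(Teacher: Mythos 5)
Your argument is correct and follows the paper's route for the one step the paper actually spells out, namely the inclusion \eqref{E:StrongControl:Inclusion}, where both proofs reduce to the Brownian time-shift $W^t=W_{t+\cdot}-W_t$ combined with an identification of $\F^t$-predictable processes on $[t,\infty)$ with $\F$-predictable processes on $[0,\infty)$ (the paper makes this identification explicit through a path-shifting map $\iota_t$ and a Borel representation $f_s(W_{(\cdot\wedge s)\vee t}-W_t)$, whereas you invoke it as a ``canonical identification''). The initialization on $\llb 0,t\rrb$ and the $(\tilde\Prob,\tilde\F)$-martingale property of $C^t_\cdot(\varphi)$ that you carefully verify via It\^o's formula for $(W^{t,\mathring{\mathbf{x}}},X^{t,\mathbf{x};\tilde a,\tilde p,\tilde q})$ and transport along the adapted pushforward \eqref{E:ProbInduced} are precisely the parts the paper declares ``more standard'' and omits, so your write-up simply fills these in without changing the strategy.
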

 
 \begin{proof}
 We verify only \eqref{E:StrongControl:Inclusion}. Showing the rest is more standard.
First, we define a ``path shifting map" $\iota_t:\Omega\to\Omega$ by
 \begin{align}\label{E:iotaDef}
[ \iota_t(\omega)](\theta):=\omega_0.\bfone_{[0,t)}(\theta)+\omega_{\theta-t}.\bfone_{[t,\infty)}(\theta),\quad \theta\ge 0.
 \end{align}
 Note that there is a Borel map
 $f:\R_+\times\Omega\to A\times\R^d\times\R^{d\times d}$ such that, for all $s\in\R_+$, we have
 $(\tilde{a},\tilde{p},\tilde{q})_s=f_s
  (W_{(\cdot\wedge s)\vee t}-W_t)
 $. Thus, $\Prob$-a.s.,
 for all $s\in\R_+$,
 \begin{align*}
 (\tilde{a},\tilde{p},\tilde{q})_{t+s}&=f_{t+s}
 ((W_{\cdot\wedge (t+s)}-W_t).\bfone_{[t,\infty)})
 =f_{t+s}([\iota_t(W_{t+\cdot}-W_t)]
 (\cdot\wedge (t+s))).
 \end{align*}
 With $\tilde{\Prob}=\Prob_{t,\mathbf{x},\mathring{\mathbf{x}};\tilde{a},\tilde{p},\tilde{q}}$, we therefore have 
 \begin{align*}
 &\tilde{\Prob}\circ(\tilde{W}^t,\tilde{M}^t_A(s,da)\,ds,\tilde{M}^t_P(s,dp)\,ds,\tilde{M}^t_Q(s,dq)\,ds)^{-1}\\
 &\qquad = \Prob\circ(W^{t,\mathring{\mathbf{x}}}_{t+\cdot}-W^{t,\mathring{\mathbf{x}}}_t,
 \delta_{\tilde{a}_{t+s}}(da)\,ds,  \delta_{\tilde{p}_{t+s}}(dp)\,ds, \delta_{\tilde{q}_{t+s}}(dq)\,ds)^{-1}\\
 &\qquad =\Prob\circ(W,\delta_{\tilde{a}^\prime_{s}}(da)\,ds,  \delta_{\tilde{p}^\prime_{s}}(dp)\,ds, 
 \delta_{\tilde{q}^\prime_{s}}(dq)\,ds)^{-1},
 \end{align*}
 where $ (\tilde{a}^\prime,\tilde{p}^\prime,\tilde{q}^\prime)_s:=f_{t+s}([\iota_t(W)]
  (\cdot\wedge (t+s)))$, $s\in\R_+$.
 From \eqref{E:iotaDef},  we can deduce that  $\tilde{a}^\prime$, $\tilde{p}^\prime$, and $\tilde{q}^\prime$
 are $\F$-progressive and thus \eqref{E:StrongControl:Inclusion} holds.
 \end{proof}
 
 \begin{lemma}\label{L:Ptxx:Ptxxapq}
 $\forall \tilde{\Prob}\in\tilde{\mathcal{P}}_{t,\mathbf{x},\mathring{\mathbf{x}}}:\,
 \exists (\tilde{a},\tilde{p},\tilde{q}):
 \,\tilde{\Prob}=\Prob_{t,\mathbf{x},\mathring{\mathbf{x}};\tilde{a},\tilde{p},\tilde{q}}$.
 \end{lemma}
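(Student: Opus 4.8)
The plan is to reverse the construction from the proof of Lemma~\ref{L:Ptxxapq:Ptxx}: first extract a triple $(\tilde{a},\tilde{p},\tilde{q})$ from the structural properties of a given strong control rule $\tilde{\Prob}\in\tilde{\mathcal{P}}_{t,\mathbf{x},\mathring{\mathbf{x}}}$, and then argue that $\tilde{\Prob}$ coincides with $\Prob_{t,\mathbf{x},\mathring{\mathbf{x}};\tilde{a},\tilde{p},\tilde{q}}$ by a uniqueness argument for the underlying controlled martingale problem.

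\emph{Extraction of the controls.} By the inclusion \eqref{E:StrongControl:Inclusion}, there is a triple $(a^\prime,p^\prime,q^\prime)\in\mathcal{A}\times\mathbb{L}^2(\F,\Prob,\R^d)\times\mathbb{L}^2(\F,\Prob,\R^{d\times d})$ with
\begin{align*}
&\tilde{\Prob}\circ(\tilde{W}^t,\tilde{M}^t_A(s,da)\,ds,\tilde{M}^t_P(s,dp)\,ds,\tilde{M}^t_Q(s,dq)\,ds)^{-1}\\
&\qquad=\Prob\circ(W,\delta_{a^\prime_s(W)}(da)\,ds,\delta_{p^\prime_s(W)}(dp)\,ds,\delta_{q^\prime_s(W)}(dq)\,ds)^{-1}.
\end{align*}
In particular, $\tilde{\Prob}$-a.s.\ the kernels $\tilde{M}_A,\tilde{M}_P,\tilde{M}_Q$ are Dirac-valued on $[t,\infty)$, and, by the starting condition in Definition~\ref{D:strongControlRule}, also on $\llb 0,t\rrb$. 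Transplanting a progressively measurable representative of $(a^\prime,p^\prime,q^\prime)$ back to the shifted filtration via the path-shift map $\iota_t$ of \eqref{E:iotaDef} (i.e., running the argument of Lemma~\ref{L:Ptxxapq:Ptxx} in the opposite direction) yields a triple $(\tilde{a},\tilde{p},\tilde{q})\in\mathcal{A}^t\times\mathbb{L}^2(\F^t,\Prob,\R^d)\times\mathbb{L}^2(\F^t,\Prob,\R^{d\times d})$ --- the $\mathbb{L}^2$-bounds being preserved because the time-shift preserves the Wiener measure --- whose associated measure $\Prob_{t,\mathbf{x},\mathring{\mathbf{x}};\tilde{a},\tilde{p},\tilde{q}}$ from \eqref{E:ProbInduced} has the same $(\tilde{W}^t,\tilde{M}^t_A(s,da)\,ds,\tilde{M}^t_P(s,dp)\,ds,\tilde{M}^t_Q(s,dq)\,ds)$-marginal as $\tilde{\Prob}$. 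Moreover, $\Prob_{t,\mathbf{x},\mathring{\mathbf{x}};\tilde{a},\tilde{p},\tilde{q}}\in\tilde{\mathcal{P}}_{t,\mathbf{x},\mathring{\mathbf{x}}}$ by Lemma~\ref{L:Ptxxapq:Ptxx}.

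\emph{Identification of $\tilde{\Prob}$.} It therefore suffices to prove that a strong control rule starting at $(t,\mathbf{x},\mathring{\mathbf{x}})$ is uniquely determined by the law of $(\tilde{W}^t,\tilde{M}^t_A(s,da)\,ds,\tilde{M}^t_P(s,dp)\,ds,\tilde{M}^t_Q(s,dq)\,ds)$. So let $\tilde{\Prob}$ be any such rule with the marginal fixed above. The starting condition of Definition~\ref{D:strongControlRule} pins down the canonical process on $\llb 0,t\rrb$; combined with the prescribed marginal and $\tilde{W}_t=\mathring{\mathbf{x}}_t$, this determines the joint $\tilde{\Prob}$-law of $(\tilde{W},\tilde{M}_A,\tilde{M}_P,\tilde{M}_Q)$ on $\R_+$. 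Writing $\tilde{M}_A(s,da)\,ds=\delta_{\tilde{a}_s}(da)\,ds$ and similarly for $\tilde{M}_P,\tilde{M}_Q$, the $(\tilde{\Prob},\tilde{\F})$-martingale property of $(C^t_s(\varphi))_{s\ge t}$, $\varphi\in C^2_b(\R^{2d})$, together with the fact that the path-dependence in \eqref{E:Ct} is frozen at $\tilde{X}_{\cdot\wedge t}=\mathbf{x}_{\cdot\wedge t}$, says precisely that $\tilde{W}$ is an $\tilde{\F}$-Brownian motion on $[t,\infty)$ (testing against functions of the first $d$ coordinates) and that, $\tilde{\Prob}$-a.s.,
\begin{align*}
\tilde{X}_s=\mathbf{x}_t+\int_t^s\left[b(r,\mathbf{x}_{\cdot\wedge t},\tilde{a}_r)+\tilde{p}_r\right]\,dr+\int_t^s\left[\sigma(r,\mathbf{x}_{\cdot\wedge t},\tilde{a}_r)+\tilde{q}_r\right]\,d\tilde{W}_r,\quad s\ge t.
\end{align*}
Since the coefficients of this equation do not involve $\tilde{X}$, its right-hand side is a fixed measurable functional of $(\tilde{W},\tilde{M}_A,\tilde{M}_P,\tilde{M}_Q)$, so the full law of $(\tilde{W},\tilde{X},\tilde{M}_A,\tilde{M}_P,\tilde{M}_Q)$ is determined. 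As $\tilde{X}=X^{t,\mathbf{x};\tilde{a},\tilde{p},\tilde{q}}$ solves this very equation \eqref{E:6.2:Xapq} under $\Prob_{t,\mathbf{x},\mathring{\mathbf{x}};\tilde{a},\tilde{p},\tilde{q}}$, this gives $\tilde{\Prob}=\Prob_{t,\mathbf{x},\mathring{\mathbf{x}};\tilde{a},\tilde{p},\tilde{q}}$, which is the assertion.

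\emph{Main obstacle.} The genuine difficulties are measure-theoretic bookkeeping: (i) checking that the triple extracted from \eqref{E:StrongControl:Inclusion} is truly $\F^t$-predictable after the time-shift and has finite $\mathbb{L}^2$-norms; and (ii) justifying that the $(\tilde{\Prob},\tilde{\F})$-martingale problem, in which the control enters only through the Dirac kernels $\tilde{M}_A,\tilde{M}_P,\tilde{M}_Q$, really forces the displayed SDE with $\tilde{W}$ as driving Brownian motion --- which requires $\tilde{a},\tilde{p},\tilde{q}$ to be $\tilde{\F}$-progressive so that the stochastic integral is well defined. Well-posedness of the martingale problem itself is not an issue here, since $\tilde{X}$ does not enter the coefficients.
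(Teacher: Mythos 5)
Your proposal follows essentially the same route as the paper: extract a control triple from \eqref{E:StrongControl:Inclusion}, shift it in time via the path-shift map $\iota_t$ (cf.~\eqref{E:PrimeControl}) to land in $\mathcal{A}^t\times\mathbb{L}^2(\F^t,\Prob,\R^d)\times\mathbb{L}^2(\F^t,\Prob,\R^{d\times d})$, and then use the martingale property of $(C^t_s(\varphi))_s$ to identify $\tilde{X}$ as the solution of the controlled SDE driven by $\tilde{W}$ under $\tilde{\Prob}$, concluding by pathwise uniqueness. The step you flag as ``obstacle~(ii)'' is precisely the crux in the paper's argument as well; there it is settled concretely by constructing the comparison process $\tilde{\xi}$ and applying It\^o's formula to $\lvert\tilde{X}_s-\tilde{\xi}_s\rvert^2$ (citing Lemma~A.1 in \cite{Cox21}), whereas your phrasing that the martingale problem ``says precisely'' the SDE representation is the assertion that this It\^o/L\'evy-characterization argument makes rigorous.
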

 
 \begin{proof}
 Fix $\tilde{\Prob}\in\tilde{\mathcal{P}}_{t,\mathbf{x},\mathring{\mathbf{x}}}$.
 First, note that, 
by~\eqref{E:StrongControl:Inclusion}, there exists  an 
$(\tilde{a},\tilde{p},\tilde{q})\in \mathcal{A}\times\mathbb{L}^2(\F,\Prob,\R^d)\times\mathbb{L}^2(\F,\Prob,\R^{d\times d})$, such that  
\begin{align*}
&(\tilde{M}_A^t(s,da),\tilde{M}_{P}^t(s,dp),\tilde{M}_{Q}^t(s,dq))=
(\delta_{\tilde{a}_s(\tilde{W}^t)}(da),
\delta_{\tilde{p}_s(\tilde{W}^t)}(dp),
\delta_{\tilde{q}_s(\tilde{W}^t)}(dq)),
 \text{$\tilde{\Prob}$-a.s.}
 \end{align*}
 Next, define $\tilde{\xi}:\R_+\times\tilde{\Omega}\to\R^d$ by  $\tilde{\xi}\vert_{[0,t]}:=\mathbf{x}\vert_{[0,t]}$ and
\begin{align*}
\tilde{\xi}_{t+s}
 &:=\mathbf{x}_t+\int_0^{s} \left[
 b_{t+r}(\mathbf{x}_{\cdot\wedge t},\tilde{a}_r(\tilde{W}^t))
 +\tilde{p}_r(\tilde{W}^t)\right]
 \,dr\\
 &\qquad\qquad 
 +\int_0^{s} \left[
  \sigma_{t+r}(\mathbf{x}_{\cdot\wedge t},\tilde{a}_r(\tilde{W}^t))
 +\tilde{q}_r(\tilde{W}^t)\right]
 \,d\tilde{W}^t_r,\quad\text{$\tilde{\Prob}$-a.s.,}\quad s\ge 0.
\end{align*}
Using $W^t:=W_{t+\cdot}-W_t$, we 
define $(\tilde{a}^\prime,\tilde{p}^\prime,\tilde{q}^\prime):
\R_+\times\Omega\to A\times\R^d\times\R^{d\times d}$
by
\begin{align}\label{E:PrimeControl}
(\tilde{a}^\prime,\tilde{p}^\prime,\tilde{q}^\prime)_s:=(a^\circ,p^\circ,q^\circ).\bfone_{[0,t]}(s)
+(\tilde{a},\tilde{p},\tilde{q})_{s-t}({W}^t).\bfone_{(t,\infty)}(s),\, s\ge 0.
\end{align}
Then 
$(\tilde{a}^\prime,\tilde{p}^\prime,\tilde{q}^\prime)\in
\mathcal{A}^t\times\mathbb{L}^2(\F^t,\Prob,\R^d)\times\mathbb{L}^2(\F^t,\Prob,\R^{d\times d})$ and
\begin{align*}
\tilde{\Prob}\circ (\tilde{W},\tilde{\xi},\tilde{M}_A(s,da)\,ds,\tilde{M}_P(s,dp)\,ds,
\tilde{M}_Q(s,dq)\,ds)^{-1}=\Prob_{t,\mathbf{x},\mathring{\mathbf{x}};\tilde{a}^\prime,\tilde{p}^\prime,\tilde{q}^\prime}.
\end{align*}
It remains to verify  that $\tilde{X}=\tilde{\xi}$, $\tilde{\Prob}$-a.s., 
which can be done by proceeding as in the proof of Lemma~A.1 in \cite{Cox21}, i.e., by applying 
It\^o's formula to $(\lvert\tilde{X}_s-\tilde{\xi}_s\rvert^2)_{s\ge t}$ and utilizing the $(\tilde{\Prob},\tilde{\F})$-martingale
properties of the processes
 $(C^t_s)(\varphi)_{t\le s\le T_1}$, 
$\varphi\in C^2_b(\R^{2d})$, $T_1>t$, from Definition~\ref{D:strongControlRule}. This concludes the proof.
\end{proof}

\subsection{Existence of ``extension" control rules}\label{S:ExtensionControl}\label{S:Extension:Existence}

Fix $\eps>0$. 
Suppose that $(t,\mathbf{x},y)\in [0,T)\times\Omega$  with $\widehat{v}(t,\mathbf{x},y)\in K$
 implies
  $\mathcal{E}_+(t,\mathbf{x})\in\mathcal{QTS}_{\widehat{v},K}(t,\mathbf{x},y)$.  
 Then
 there
is
 a  
 $(\delta,\tilde{a},\tilde{p},\tilde{q})\in (0,\eps]\times
 \mathcal{A}^t\times \mathbb{L}^2(\F^t,\Prob,\R^d)\times\mathbb{L}^2(\F^t,\Prob,\R^{d\times d})$
such that, for all $\mathring{\mathbf{x}}\in\Omega$,
$\Mean^{ \Prob_{t,\mathbf{x},\mathring{\mathbf{x}};\tilde{a},\tilde{p},\tilde{q}}}\left[
\widehat{v}(t+\delta,\tilde{X},y)
 \right]=\Mean \left[\widehat{v}(t+\delta,X^{t,\mathbf{x};\tilde{a},\tilde{p},\tilde{q}},y)\right]\le\eps\delta$ and
\begin{align}\label{E:Mp:Mq}
\Mean^{\tilde{\Prob}}\left[ \int_t^{t+\delta} \left[\int_{\R^d}\abs{p}^2\,\tilde{M}_P(s,dp)+\int_{\R^{d\times d}}\abs{q}^2\,
\tilde{M}_Q(s,dq)\right]\,ds\right]\le  \eps\delta,
\end{align}
where $\tilde{\Prob}=\Prob_{t,\mathbf{x},\mathring{\mathbf{x}};\tilde{a},\tilde{p},\tilde{q}}$. 
Put $\Lambda:=\{(t,\mathbf{x},y)\in [0,T)\times\Omega\times\R:\, 
\widehat{v}(t,\mathbf{x},y)\in K\}$. 
 Then,
taking 
Lemma~\ref{L:Ptxxapq:Ptxx}
into account,  we can see that the following holds:
\begin{equation}\label{E:weakProperty}
\begin{split}
\forall (t,\mathbf{x},y,\mathring{\mathbf{x}})\in 
\Lambda
\times\Omega:
\exists (\delta,\tilde{\Prob})\in (0,\eps]\times\tilde{\mathcal{P}}_{t,\mathbf{x},\mathring{\mathbf{x}}}:
 \Mean^{\tilde{\Prob}}\left[\widehat{v}(t+\delta,\tilde{X},y)\right]\le\eps\delta.
\end{split}
\end{equation}


Consider  
\begin{equation}\label{E:definition:M}
\begin{split}
\mathbf{M}&:=\{ (t,\mathbf{x},y,\mathring{\mathbf{x}},\delta,\tilde{\Prob})
\in 
\Lambda
\times\Omega\times (0,\eps]\times\mathcal{P}(\tilde{\Omega}):\\
&\qquad\qquad
\tilde{\Prob}\in\tilde{\mathcal{P}}_{t,\mathbf{x},\mathring{\mathbf{x}}}, \,
\Mean^{\tilde{\Prob}} [\widehat{v}(t+\delta,\tilde{X},y)]\le\eps\delta,\,\text{ and \eqref{E:Mp:Mq} holds}\}.
\end{split}
\end{equation}

\begin{lemma}\label{L:M_analytic}  Let $\widehat{v}$ be l.s.a. 
Then $\mathbf{M}$
is analytic.
\end{lemma}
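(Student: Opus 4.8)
The plan is to show that $\mathbf{M}$ is the intersection of $\Lambda\times\Omega\times(0,\eps]\times\mathcal{P}(\tilde\Omega)$ with preimages of analytic (in fact Borel) sets under Borel-measurable maps, and that the defining inequalities cut out analytic sets. First I would record that $\Lambda$ is Borel (or at least analytic): since $\widehat v$ is universally measurable and non-anticipating and $K=(-\infty,0]$ is closed, $\Lambda=\{\widehat v\le 0\}$ is universally measurable; when $\widehat v$ is l.s.a.\ the sublevel set $\{\widehat v\le 0\}$ need not be analytic, but $\{\widehat v<\infty\}\cap\{\widehat v\le 0\}$ is the complement of $\{\widehat v>0\}$, which is analytic, so care is needed here --- I would instead observe that the conditions ``$\widehat v(t,\mathbf x,y)\in K$'' and ``$\Mean^{\tilde\Prob}[\widehat v(t+\delta,\tilde X,y)]\le\eps\delta$'' can be folded together and handled by the lower-semianalyticity of the relevant maps, so that $\mathbf M$ is obtained by intersecting a Borel set with sublevel sets of l.s.a.\ functions, hence is analytic.

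Concretely, the key steps are: (1) Show the map $(t,\mathbf x,\mathring{\mathbf x})\mapsto\tilde{\mathcal P}_{t,\mathbf x,\mathring{\mathbf x}}$ has analytic graph, i.e.\ $\{(t,\mathbf x,\mathring{\mathbf x},\tilde\Prob):\tilde\Prob\in\tilde{\mathcal P}_{t,\mathbf x,\mathring{\mathbf x}}\}$ is analytic (indeed Borel) in $[0,T)\times\Omega\times\Omega\times\mathcal P(\tilde\Omega)$. This follows the standard martingale-problem measurability arguments (as in \cite{ElKarouiTanII,HaussmannLepeltier90SICON}): the martingale property of $(C^t_s(\varphi))$ for countably many $\varphi$ in a convergence-determining subset of $C^2_b(\R^{2d})$ is a Borel condition on $\tilde\Prob$ (each $C^t_s(\varphi)$ being a bounded measurable functional), the initial-condition restriction is Borel, and the inclusion \eqref{E:StrongControl:Inclusion} is Borel because $\Upsilon(\mathcal A\times\mathbb L^2\times\mathbb L^2)$ is a Borel subset of $\mathcal P(\Omega\times\mathbb M_A\times\mathbb M_{\R^d}\times\mathbb M_{\R^{d\times d}})$ and pushing forward under the shift is a Borel map on $\mathcal P(\tilde\Omega)$. (2) Show $(t,\mathbf x,y,\delta,\tilde\Prob)\mapsto\Mean^{\tilde\Prob}[\widehat v(t+\delta,\tilde X,y)]$ is l.s.a.: the map $(t,\mathbf x,y,\delta,\tilde\omega)\mapsto\widehat v(t+\delta,\tilde X_{\cdot}(\tilde\omega),y)$ is l.s.a.\ by assumption on $\widehat v$ (composition with continuous maps preserves lower semianalyticity), and integration $\Prob'\mapsto\Mean^{\Prob'}[g]$ of an l.s.a.\ function $g$ against the measure variable is l.s.a.\ by Proposition 7.48 in \cite{BertsekasShreve} (the standard l.s.a.-preservation-under-integration result); hence $\{\Mean^{\tilde\Prob}[\widehat v(t+\delta,\tilde X,y)]\le\eps\delta\}$ is analytic. (3) The condition \eqref{E:Mp:Mq} is even easier: $(t,\delta,\tilde\Prob)\mapsto\Mean^{\tilde\Prob}[\int_t^{t+\delta}(\int|p|^2\tilde M_P(s,dp)+\int|q|^2\tilde M_Q(s,dq))\,ds]$ is Borel (the integrand is a nonnegative Borel functional of $\tilde\omega$), so its sublevel set is Borel. (4) Also $\{\widehat v(t,\mathbf x,y)\in K\}=\Lambda$: here I would include $\Lambda$ in the intersection as $\{\widehat v(t,\mathbf x,y)\le 0\}$, which is the sublevel set of an l.s.a.\ function, hence analytic. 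Intersecting the finitely many analytic sets from (1)--(4) gives that $\mathbf M$ is analytic.

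I expect the main obstacle to be step (1): verifying carefully that membership in $\tilde{\mathcal P}_{t,\mathbf x,\mathring{\mathbf x}}$ is a Borel (or analytic) condition jointly in the initial data and the measure. The delicate points are (a) that the random variables $C^t_s(\varphi)$ defined piecewise in \eqref{E:Ct} --- with the ``otherwise $=0$'' clause --- are jointly measurable in $(\tilde\omega)$ and depend measurably on the base-point $t$ (this is where one invokes the measurable selection of the density, cf.\ Remark 1.4 in \cite{ElKarouiTanII}); (b) that the ``for all $T_1>t$'' and ``for all $\varphi\in C^2_b$'' quantifiers can be reduced to countable ones (rational $T_1$, a countable convergence-determining family of $\varphi$), so that the martingale condition is a countable intersection of Borel conditions; and (c) that the pushforward map $\tilde\Prob\mapsto\tilde\Prob\circ(\tilde W^t,\tilde M^t_A,\tilde M^t_P,\tilde M^t_Q)^{-1}$ is Borel-measurable for fixed $t$ and that its composition with the indicator of the Borel set $\Upsilon(\mathcal A\times\mathbb L^2\times\mathbb L^2)$ behaves well. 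These are all standard in the weak-formulation literature but need to be assembled in the present path-dependent, measure-valued-control setting; I would cite \cite{ElKarouiTanII,Djete22AOP,HaussmannLepeltier90SICON} for the individual pieces and spell out only the composition.

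Finally, I would remark that since an analytic set is in particular universally measurable, Lemma~\ref{L:M_analytic} suffices for the later measurable-selection arguments (Jankov--von Neumann) that extract a universally measurable selector $(t,\mathbf x,y,\mathring{\mathbf x})\mapsto(\delta,\tilde\Prob)$ of $\mathbf M$, which is what the extension lemma ultimately needs. If one only cared about that consequence, it would be enough to show $\mathbf M$ is analytic (not Borel), which is why I phrase steps (2) and (4) in terms of lower semianalyticity rather than Borel measurability.
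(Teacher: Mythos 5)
Your overall strategy matches the paper's: establish that $(t,\mathbf{x},y,\mathring{\mathbf{x}},\delta,\tilde{\Prob})\mapsto\Mean^{\tilde{\Prob}}[\widehat{v}(t+\delta,\tilde{X},y)]$ is l.s.a.\ by composition (Lemma~7.30(3)) and integration (Proposition~7.48) preservation in Bertsekas--Shreve, obtain analyticity of the resulting sublevel set from Lemma~7.30(1), and defer the measurability of the graph of $(t,\mathbf{x},\mathring{\mathbf{x}})\mapsto\tilde{\mathcal{P}}_{t,\mathbf{x},\mathring{\mathbf{x}}}$ (including condition~\eqref{E:Mp:Mq}) to Lemma~3.2 in \cite{ElKarouiTanII} and Lemmata~4.5, 4.10 in \cite{Djete22AOP}. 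Your steps (2) and (3) are exactly this, and step (1) is the part the paper outsources to those references.

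However, your opening paragraph is internally inconsistent and contains a false claim. You write that for l.s.a.\ $\widehat{v}$ the set $\{\widehat{v}\le 0\}$ ``need not be analytic,'' and that $\{\widehat{v}>0\}$ ``is analytic.'' Both are backwards: since $\widehat{v}$ is l.s.a., $\{\widehat{v}<c\}$ is analytic for every $c$, so $\{\widehat{v}\le 0\}=\bigcap_{n}\{\widehat{v}<1/n\}$ is a \emph{countable intersection} of analytic sets and hence analytic, while $\{\widehat{v}>0\}$ is coanalytic (complement of an analytic set), not analytic. Your step (4) then correctly states that $\{\widehat{v}\le 0\}$ is analytic as a sublevel set of an l.s.a.\ function, which directly contradicts the opening remark. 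The worry in the first paragraph is therefore unfounded and should simply be deleted; with that correction, the proposal is a valid and essentially identical rendering of the paper's argument.
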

\begin{proof}
First, note that, 
by Lemma~7.30~(3) in \cite{BertsekasShreve78book}, 
$(t,\delta,\mathbf{x},y)\mapsto \widehat{v}(t+\delta,\mathbf{x},y)$ is l.s.a.  Hence, by Proposition~7.48 in 
\cite{BertsekasShreve78book},
$(t,\mathbf{x},y,\mathring{\mathbf{x}},\delta,\tilde{\Prob})\mapsto\Mean^{\tilde{\Prob}} [\widehat{v}(t+\delta,\tilde{X},y)]$ is l.s.a.
Thus, together with
 Lemma~7.30~(1) in \cite{BertsekasShreve78book}, the set
$\{ (t,\mathbf{x},y,\mathring{\mathbf{x}},\delta,\tilde{\Prob}):\,
\Mean^{\tilde{\Prob}}[\widehat{v}(t+\delta,\tilde{X},y)]\le \eps\delta\}$ is analytic.
We skip the rest of the proof, which is basically identical to the proof of Lemma~3.2 in \cite{ElKarouiTanII} 
(cf.~also Lemmata~4.5 and 4.10 in \cite{Djete22AOP}).
\end{proof}

Next, define a map 
$H:\Lambda
\times\Omega\times (0,\eps]\times\mathcal{P}(\tilde{\Omega})\to\overline{\R}$ by
\begin{align*}
H(t,\mathbf{x},y,\mathring{\mathbf{x}},\delta,\tilde{\Prob}):=
(+\infty).\bfone_{\mathbf{M}^c}(t,\mathbf{x},y,\mathring{\mathbf{x}},\delta,\tilde{\Prob}).
\end{align*}
Since $\mathbf{M}$ is analytic (Lemma~\ref{L:M_analytic}), 
  $H$ is l.s.a. Define  
 $H^\ast:
 \Lambda
 \times\Omega\to\overline{\R}$ 
by 
\begin{align*}
H^\ast(t,\mathbf{x},y,\mathring{\mathbf{x}}):=\inf_{(\delta,\tilde{\Prob})\in (0, 
\eps]\times\mathcal{P}(\tilde{\Omega})} 
H(t,\mathbf{x},y,\mathring{\mathbf{x}},\delta,\tilde{\Prob}).
\end{align*}
By \eqref{E:Mp:Mq} and \eqref{E:weakProperty},
Proposition~7.50 in \cite{BertsekasShreve78book}  yields the existence of a universally (in fact, even analytically) measurable minimizer
$(\delta^\ast,\mathbb{Q}^\ast): 
\Lambda
\times\Omega\to (0, 
\eps]\times\mathcal{P}(\tilde{\Omega})$, i.e.,
\begin{align*}
H^\ast(t,\mathbf{x},y,\mathring{\mathbf{x}})=
H(t,\mathbf{x},y,\mathring{\mathbf{x}},\delta^\ast_{t,\mathbf{x},y,\mathring{\mathbf{x}}},
\mathbb{Q}^\ast_{t,\mathbf{x},y,\mathring{\mathbf{x}}})=0.
\end{align*}

Note that, since $(t,\mathbf{x},y,\mathring{\mathbf{x}},
\delta^\ast_{t,\mathbf{x},y,\mathring{\mathbf{x}}},
\mathbb{Q}^\ast_{t,\mathbf{x},y,\mathring{\mathbf{x}}})\in\mathbf{M}$ for every
$(t,\mathbf{x},y,\mathring{\mathbf{x}})\in
\Lambda
\times\Omega$, we have, 
by the definitions of $\mathbf{M}$ and $\tilde{\mathcal{P}}_{t,\mathbf{x},\mathring{\mathbf{x}}}$ (see \eqref{E:definition:M} and Definition~\ref{D:strongControlRule}),
\begin{align}\label{E:deltaQ:adapted}
\mathbb{Q}^\ast_{t,\mathbf{x},y,\mathring{\mathbf{x}}}
=
\mathbb{Q}^\ast_{t,\mathbf{x}_{\cdot\wedge t},y,\mathring{\mathbf{x}}_{\cdot\wedge t}}.
\end{align}
\begin{remark}\label{R:Before:L:Step3}
{\color{black} By \eqref{E:structural}, }
$\widehat{v}(t,\mathbf{x},v(t,\mathbf{x}))
\in K$ and thus
\begin{align*}
[0,T)\times\Omega\times\Omega\to(0,\eps 
]\times\mathcal{P}(\tilde{\Omega}),
(t,\mathbf{x},\mathring{\mathbf{x}})\mapsto 
(\delta^{\ast\ast}_{t,\mathbf{x},\mathring{\mathbf{x}}},\mathbb{Q}^{\ast\ast}_{t,\mathbf{x},\mathring{\mathbf{x}}}):=
(\delta^\ast_{\mathfrak{y}},
\mathbb{Q}^\ast_{\mathfrak{y}})\vert_{
\mathfrak{y}=(t,\mathbf{x},v(t,\mathbf{x}),\mathring{\mathbf{x}})},
\end{align*}
is universally measurable  
(Proposition~7.44 in 
\cite{BertsekasShreve78book}). 
\end{remark}
{

{
\subsection{The extension lemma with its proof}\label{S:Extension:Lemma}

\begin{lemma}[the extension lemma]\label{L:Step3}
Let 
$v$ be l.s.a.~and bounded from below. 
If,
for each $(t,\mathbf{x},y)\in [0,T)\times\Omega\times\R$ 
with $\widehat{v}(t,\mathbf{x},y)\in K$, we have
$\mathcal{E}_+(t,\mathbf{x})\in
\mathcal{QTS}_{\widehat{v},K}(t,\mathbf{x},y)$,
then, given 
$\eps>0$ and $(t,\mathbf{x},y)\in [0,T)\times\Omega\times\R$  with  $\widehat{v}(t,\mathbf{x},y)\in K$,
every $\eps$-approximate solution  
$(\tau^\eps,\varrho^\eps,a^\eps,\bar{p},\bar{q},X^\eps)$ 
 of  \eqref{E:mv:CSDE} for $(\widehat{v},K)$  
 starting at $(t,\mathbf{x},y)$ 
 with $\Prob(\tau^\eps<T)>0$ has an ``extension"
 $\mathfrak{s}^{\eps,+}=(\tau^{\eps,+},\varrho^{\eps,+},a^{\eps,+},p^{\eps,+},q^{\eps,+},X^{\eps,+})$ 
 that satisfies the following:
 
 (i)
$\mathfrak{s}^{\eps,+}$ is an $\eps$-approximate solution of  \eqref{E:mv:CSDE} for $(\widehat{v},K)$  starting at 
$(t,\mathbf{x},y)$.

(ii) 
 Outside of a $\Prob$-evanescent set, 
$(\varrho^{\eps,+},a^{\eps,+},p^{\eps,+},q^{\eps,+})_{\cdot\wedge \tau^\eps}=
(\varrho^\eps,a^\eps,\bar{p},\bar{q})_{\cdot\wedge\tau^\eps}$. 

(iii)
 $\tau^\eps\le\tau^{\eps,+}$. 

(iv)
$\Prob(\tau^\eps<\tau^{\eps,+})>0$.
\end{lemma}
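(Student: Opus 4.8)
The plan is to stitch the given $\eps$-approximate solution $\mathfrak{s}^\eps$ together with a short ``quasi-tangential continuation'' launched from the random endpoint $X^\eps_{\tau^\eps}$, and to make this rigorous by passing to the weak formulation developed in Sections~\ref{S:Extension:Setting}--\ref{S:Extension:Existence}. First I would fix the universally measurable selectors $(\delta^{\ast\ast},\mathbb{Q}^{\ast\ast})$ from Remark~\ref{R:Before:L:Step3}: for each $(t',\mathbf{x}',\mathring{\mathbf{x}}')\in[0,T)\times\Omega\times\Omega$ they provide a step size $\delta^{\ast\ast}_{t',\mathbf{x}',\mathring{\mathbf{x}}'}\in(0,\eps]$ and a strong control rule $\mathbb{Q}^{\ast\ast}_{t',\mathbf{x}',\mathring{\mathbf{x}}'}\in\tilde{\mathcal{P}}_{t',\mathbf{x}',\mathring{\mathbf{x}}'}$ realizing the mean quasi-tangency at $(t',\mathbf{x}',v(t',\mathbf{x}'))$, i.e.\ $\Mean^{\mathbb{Q}^{\ast\ast}}[\widehat v(t'+\delta^{\ast\ast},\tilde X,v(t',\mathbf{x}'))]\le\eps\delta^{\ast\ast}$ together with the $(p,q)$-bound \eqref{E:Mp:Mq}. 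By \eqref{E:deltaQ:adapted} these depend only on the paths stopped at $t'$, which is what makes the upcoming concatenation adapted.

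Next I would concatenate. On the event $\{\tau^\eps=T\}$ there is nothing to do, so work on $\{\tau^\eps<T\}$, which has positive probability by hypothesis. Using a regular conditional probability distribution of the law of $(\tilde W,\tilde X,\tilde M_A,\tilde M_P,\tilde M_Q)$ (built from $\mathfrak{s}^\eps$ via $\Upsilon$) given $\tilde{\cF}_{\tau^\eps}$, and plugging the endpoint data $(t',\mathbf{x}',\mathring{\mathbf{x}}')=(\tau^\eps,X^\eps_{\cdot\wedge\tau^\eps},W_{\cdot\wedge\tau^\eps})$ into the selector $\mathbb{Q}^{\ast\ast}$, I would define a new measure $\tilde{\Prob}^+$ on $\tilde\Omega$ that agrees with the old one up to $\tau^\eps$ and follows $\mathbb{Q}^{\ast\ast}$ afterwards on a time interval of length $\delta^{\ast\ast}$. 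Set $\tau^{\eps,+}:=\tau^\eps+\delta^{\ast\ast}_{\tau^\eps,X^\eps_{\cdot\wedge\tau^\eps},W_{\cdot\wedge\tau^\eps}}$ on $\{\tau^\eps<T\}$ (truncated at $T$) and $\tau^{\eps,+}:=\tau^\eps=T$ otherwise; set $\varrho^{\eps,+}_s:=\varrho^\eps_s$ for $s\le\tau^\eps$, $\varrho^{\eps,+}_s:=\tau^\eps$ for $\tau^\eps<s<\tau^{\eps,+}$, and $\varrho^{\eps,+}_s:=s$ for $s\ge\tau^{\eps,+}$. Then I would pass back from $\tilde{\Prob}^+$ to a genuine ``strong'' object on the original $\Omega$ via Lemma~\ref{L:Ptxx:Ptxxapq}, extracting $(a^{\eps,+},p^{\eps,+},q^{\eps,+})$ and the continuous state process $X^{\eps,+}$ solving \eqref{E:6.2:Xapq}-type dynamics, with $(\varrho^{\eps,+},a^{\eps,+},p^{\eps,+},q^{\eps,+})_{\cdot\wedge\tau^\eps}=(\varrho^\eps,a^\eps,\bar p,\bar q)_{\cdot\wedge\tau^\eps}$ outside a $\Prob$-null set, giving (ii)--(iv) immediately (for (iv), note $\delta^{\ast\ast}>0$ and $\{\tau^\eps<T\}$ has positive probability).

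It then remains to check that $\mathfrak{s}^{\eps,+}$ is again an $\eps$-approximate solution, i.e.\ conditions (i)--(vi) of Definition~\ref{D:Approx}. Conditions (i)--(v) are routine from the construction: (ii) holds by the choice of $\varrho^{\eps,+}$ and $\delta^{\ast\ast}\le\eps$; the integrability and SDE conditions (iii)--(v) follow by adding the bound \eqref{E:Mp:Mq} on $\llb\tau^\eps,\tau^{\eps,+}\rrb$ to the existing bound \eqref{E:tptq:eps} on $\llb t,\tau^\eps\rrb$ and using $\Mean[\tau^{\eps,+}-t]=\Mean[\tau^\eps-t]+\Mean[\tau^{\eps,+}-\tau^\eps]$. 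The delicate point is (vi), the viability inequality $\inf_{k\in K}\abs{\Mean[\widehat v(\varrho^{\eps,+}_s\wedge\tau^{\eps,+},X^{\eps,+},y)]-k}\le\eps\,\Mean[s\wedge\tau^{\eps,+}-t]$: for $s\le\tau^\eps$ it is inherited from $\mathfrak{s}^\eps$, but for $s>\tau^\eps$ one must combine the old bound at time $\tau^\eps$ with the fresh quasi-tangential bound and exploit Assumption~\ref{A:structural} ($\widehat v=v(t,\mathbf{x})-y$), the fact that the continuation was launched at $(\tau^\eps,X^\eps_{\cdot\wedge\tau^\eps},v(\tau^\eps,X^\eps_{\cdot\wedge\tau^\eps}))$, and the tower property conditioning on $\tilde{\cF}_{\tau^\eps}$, so that $\Mean[\widehat v(\tau^{\eps,+},X^{\eps,+},y)]=\Mean[\Mean^{\mathbb{Q}^{\ast\ast}}[v(\tau^{\eps,+},\tilde X)-v(\tau^\eps,X^\eps_{\cdot\wedge\tau^\eps})]]+\Mean[\widehat v(\tau^\eps,X^\eps,y)]\le\eps\Mean[\delta^{\ast\ast}]+\eps\Mean[\tau^\eps-t]$. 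I expect this bookkeeping around $\varrho^{\eps,+}$ on the overlap interval $(\tau^\eps,\tau^{\eps,+})$ to be the main obstacle — one has to be careful that $\varrho^{\eps,+}_s\wedge\tau^{\eps,+}=\tau^\eps$ there, so the relevant quantity is $\Mean[\widehat v(\tau^\eps,X^{\eps,+},y)]=\Mean[\widehat v(\tau^\eps,X^\eps,y)]\le\eps\Mean[\tau^\eps-t]\le\eps\Mean[s\wedge\tau^{\eps,+}-t]$, which closes the estimate. The measurability of all selections along the way is guaranteed by Lemma~\ref{L:M_analytic}, Remark~\ref{R:Before:L:Step3}, and the standard r.c.p.d.\ machinery, exactly as cited from \cite{BertsekasShreve78book,ElKarouiTanII}.
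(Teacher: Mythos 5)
Your high-level plan — pass to the weak formulation, concatenate at $\tau^\eps$ via an r.c.p.d.\ using the universally measurable selectors $(\delta^{\ast\ast},\mathbb{Q}^{\ast\ast})$ from Remark~\ref{R:Before:L:Step3}, then come back to the strong formulation and verify the conditions of Definition~\ref{D:Approx}, with the delicate point being~(vi) — is essentially the architecture of the paper's proof. You also correctly anticipate that Assumption~\ref{A:structural} and the tower property close the estimate in~(vi).

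There is, however, a genuine gap at the phrase ``pass back from $\tilde{\Prob}^+$ to a genuine strong object on the original $\Omega$ via Lemma~\ref{L:Ptxx:Ptxxapq}.'' That lemma only applies once you know $\tilde{\Prob}^+\in\tilde{\mathcal{P}}_{t,\mathbf{x},\mathring{\mathbf{x}}}$, and in particular that it satisfies the inclusion~\eqref{E:StrongControl:Inclusion}, i.e.\ that the marginal on the shifted control coordinates lies in $\Upsilon(\mathcal{A}\times\mathbb{L}^2\times\mathbb{L}^2)$. This is \emph{not} automatic under concatenation: after $\tau^\eps$ the continuation control coming from $\mathbb{Q}^{\ast\ast}$ is parametrized by $\tilde{\cF}_{\tau^\eps}$-data (including the controlled path $X^\eps$ and the accumulated control), not merely by the Brownian increments, so one must actually exhibit a single $\F$-progressive control on $\Omega$ that reproduces the glued law. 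This is precisely the content of the paper's Step~5, which is by far the longest step: it uses a Neufeld--Nutz--type gluing (countable Borel partitions, piecewise constant approximations $(\tilde a^m,\tilde p^m,\tilde q^m)$, a limit $(\tilde a^\dagger,\tilde p^\dagger,\tilde q^\dagger)$, and a direct verification of~\eqref{E:Extension:ProbAst999}) rather than an appeal to Lemma~\ref{L:Ptxx:Ptxxapq}. Relatedly, even if Lemma~\ref{L:Ptxx:Ptxxapq} applied, it would only hand you \emph{some} control $(\tilde a,\tilde p,\tilde q)$ with the right law; it would not guarantee property~(ii), that the new control agrees $\Prob$-a.s.\ with $(a^\eps,\bar p,\bar q)$ on $\llb0,\tau^\eps\rrb$ — the paper enforces that agreement by an explicit construction of $(\tilde a^\ast,\tilde p^\ast,\tilde q^\ast)$. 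You also skip the paper's Step~1 reduction, where the $\varrho^\eps$-delay in the dynamics of $X^\eps$ is absorbed into freshly defined perturbations $(p^\eps,q^\eps)$ so that the law $\Prob^\eps$ actually solves a martingale problem of the canonical form~\eqref{E:Ct}; this is needed before the concatenation machinery can be invoked at all.
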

\begin{proof}
Fix $\eps>0$, $(t,\mathbf{x},y)\in [0,T)\times\Omega\times\R$ with $\widehat{v}(t,\mathbf{x},y)\in K$,
 and
an $\eps$-approximate solution $(\tau^\eps,\varrho^\eps,a^\eps,\bar{p},\bar{q},X^\eps)$ 
 of  \eqref{E:mv:CSDE} for $(\widehat{v},K)$  starting at $(t,\mathbf{x},y)$ and satisfying $\Prob(E_1)>0$, where
\begin{align*}
E_1:=\{\tau^\eps<T\}.
\end{align*}

The rest of the proof is structured as follows. In Step~1, we identify $(X^\eps,a^\eps,\bar{p},\bar{q})$ with
the solution of a martingale problem specified by a probability measure $\Prob^\eps$ on $\tilde{\Omega}$.
Thanks to the mean quasi-tangency condition 
\begin{align*}
\mathcal{E}_+(\tau^\eps(\omega),X^\eps(\omega))\in
\mathcal{QTS}_{\widehat{v},K}(\tau^\eps(\omega),X^\eps(\omega),
v(\tau^\eps(\omega),X^\eps(\omega))\text{ whenever $\tau^\eps(\omega)<T$,}
\end{align*}
 we obtain from Section~\ref{S:Extension:Existence}
 extension control rules specified by  probability measures. 
This leads to a kernel, whose measurability is shown in Step~2.
In Step~3, we concatenate this kernel with $\Prob^\eps$. This concatenation 
 has appropriate martingale properties (Step~4) and corresponds to a ``candidate" extended solution $\mathfrak{s}^+$
 (Step~5).
In Step~6, we verify that $\mathfrak{s}^+$ is indeed an $\eps$-approximate solution.

\textit{Step~1.} 
To relate $X^\eps$ to the solution of a martingale problem 
(cf.~Section~\ref{S:Extension:Setting}) 
define $(p^\eps,q^\eps)\in\mathbb{L}^2(\F^t,\Prob,\R^d)\times\mathbb{L}^2(\F^t,\Prob,\R^{d\times d})$ by
\begin{align*}
p^\eps_s:&=\bfone_{[t,\tau^\eps]}(s).[b(s,X^\eps_{\cdot\wedge\varrho^\eps_s},a^\eps_s)+\bar{p}_s]
-b(s,X_{\cdot\wedge\tau^\eps}^\eps,a^\eps_s)\quad\text{and}\\
q^\eps_s&:=\bfone_{[t,\tau^\eps]}(s).[\sigma(s,X^\eps_{\cdot\wedge\varrho^\eps_s},a^\eps_s)
+\bar{q}_s]
-\sigma(s,X_{\cdot\wedge\tau^\eps}^\eps,a^\eps_s),\quad s\ge t,
\end{align*}
so that (cf.~condition (v) of Definition~\ref{D:Approx}), $\Prob$-a.s.~on $[t,\infty)$, we have
\begin{align}\label{E:Extension:Xeps:Dynamics}
dX^\eps_{s\wedge\tau^\eps}=[b(s,X^\eps_{\cdot\wedge\tau^\eps},a^\eps_s)+p^\eps_s]\,ds
+[\sigma(s,X^\eps_{\cdot\wedge\tau^\eps},a^\eps_s)+q^\eps_s]\,dW_s.
\end{align}
Fix some $\mathring{\mathbf{x}}\in\Omega$.  Similarly as in~\eqref{E:ProbInduced}, consider the induced probability measure
\begin{align}\label{E:ProbEps}
\Prob^\eps:=\Prob\circ(
W^{t,\mathring{\mathbf{x}}},X^\eps_{\cdot\wedge\tau^\eps}, 
\delta_{a^\eps_s}(da)\,ds,\delta_{p^\eps_s}(dp)\,ds,\delta_{q^\eps_s}(dq)\,ds
)^{-1}\in\mathcal{P}(\tilde{\Omega}).
\end{align}
For every $T_1> t$ and $\varphi\in C^2_b(\R^{2d})$, the process $C(\varphi):\R_+\times\tilde{\Omega}
\to\R$ defined by\footnote{Recall the definition of $C^{t_0}$ for  deterministic times $t_0$ in \eqref{E:Ct}.
The extension for stopping times is done in the usual way.}
\begin{align*}
C_s(\varphi):=C_s^{\tau^\eps\circ\tilde{W}}(\varphi)
\end{align*}
is, 
due to \eqref{E:Extension:Xeps:Dynamics}, 
 a $(\Prob^\eps,\tilde{\F} )$-martingale on $[t,T_1]$.
Also  note that $\Prob^\eps(\tilde{E}_1)>0$, where 
\begin{align*}
\tilde{E}_1:=
\{\tau^\eps\circ\tilde{W}<T\}\subset\tilde{\Omega}.
\end{align*}

{\color{black} 
\textit{Step~2.} 
Using \eqref{E:deltaQ:adapted} and Remark~\ref{R:Before:L:Step3}, we can apply
 the same argument 
 as  in
the paragraph
 before (2.7) 
  in \cite{NutzVanHandel13SPA} (see, in particular, Lemma~2.5 therein) 
  to deduce the existence of 
 an $\tilde{\cF}_{\tau^\eps\circ\tilde{W}}$-measurable function 
 $\tilde{\delta}^\ast:\tilde{\Omega}\to\R_+$ 
 and an 
 $\tilde{\cF}_{\tau^\eps\circ\tilde{W}}$-measurable kernel
   $\tilde{\mathbb{Q}}_\cdot^\ast:\tilde{\Omega}\to\mathcal{P}(\tilde{\Omega})$ 
such that, for 
$\Prob^\eps$-a.e.~$\tilde{\omega}\in\tilde{E}_1$,
\begin{align}\label{E:QstarQstar}
(\tilde{\delta}^\ast_{\tilde{\omega}},\tilde{\mathbb{Q}}^\ast_{\tilde{\omega}})=
(\delta^{\ast\ast}_{\mathfrak{x}},
\mathbb{Q}^{\ast\ast}_{\mathfrak{x}})
\vert_{\mathfrak{x}=
(\tau^\eps\circ\tilde{W}(\tilde{\omega}),\tilde{X}_{\cdot\wedge\tau^\eps\circ\tilde{W}}(\tilde{\omega}),
\tilde{W}_{\cdot\wedge\tau^\eps\circ\tilde{W}}(\tilde{\omega}))}
\end{align}
and that, for $\Prob^\eps$-a.e.~$\tilde{\omega}\in
\tilde{\Omega}\setminus\tilde{E}_1$, 
\begin{equation}\label{E:QstarPeps}
(\tilde{\delta}^\ast_{\tilde{\omega}},\tilde{\mathbb{Q}}^\ast_{\tilde{\omega}})=(0,\Prob^\eps_{\tilde{\omega}}),
\end{equation}
where $\{\Prob^\eps_{\tilde{\omega}}\}_{\tilde{\omega}\in\tilde{\Omega}}$ 
is an r.c.pd.~of $\Prob^\ast$ given $\tilde{\cF}_{\tau^\eps\circ\tilde{W}}$
(cf.~pp.~13 and 25 in \cite{ElKarouiTanII}).

\textit{Step~3.} Following  \cite{HaussmannLepeltier90SICON} (see, in particular, pp.~880-881) and \cite{ElKarouiTanII} 
(see, in particular, p.~11),
we introduce now concatenated probability measures relevant in our context. First,
given $\tilde{\omega}\in\tilde{\Omega}$, consider the $\sigma$-field
\begin{align*}
&(\tilde{\cF})^{\tau^\eps\circ \tilde{W}(\tilde{\omega})}:=\sigma(\tilde{W}_s,\tilde{X}_s,
\tilde{M}_{A,s}(\varphi_A)-\tilde{M}_{A,\tau^\eps\circ \tilde{W}(\tilde{\omega})}(\varphi_A),
\tilde{M}_{P,s}(\varphi_P)\\&\qquad -\tilde{M}_{P,\tau^\eps\circ \tilde{W}(\tilde{\omega})}(\varphi_P),
\tilde{M}_{Q,s}(\varphi_Q)-\tilde{M}_{Q,\tau^\eps\circ \tilde{W}(\tilde{\omega})}(\varphi_Q):\,
s\ge \tau^\eps\circ W^\eps(\tilde{\omega}),\\ &\qquad 
\varphi_A\in C_b(\R_+\times A),\,\varphi_B\in C_b(\R_+\times \R^d),
\text{ and }
\varphi_Q\in C_b(\R_+\times\R^{d\times d})),
\end{align*}
where $M_{A,s}(\varphi)=\int_0^s  \int_A \varphi(r,a)\, M_A(dr,da)$, etc.
Then we denote by
$\delta_{\tilde{\omega}}\otimes_{\tau^\eps\circ\tilde{W}}\tilde{\mathbb{Q}}^\ast_{\tilde{\omega}}$
 the unique probability measure in $\mathcal{P}(\tilde{\Omega})$ such that 
 \begin{align*}
\delta_{\tilde{\omega}}\otimes_{\tau^\eps\circ\tilde{W}}\tilde{\mathbb{Q}}^\ast_{\tilde{\omega}} \,
((\tilde{X},\tilde{W},\tilde{M}_A(da),\tilde{M}_{P}(dp),\tilde{M}_{Q}(dq))\vert_{[0,\tau^\eps\circ\tilde{W}(\tilde{\omega})]}=
 \tilde{\omega}\vert_{[0,\tau^\eps\circ \tilde{W}(\tilde{\omega})]})=1
 \end{align*}
and that $\delta_{\tilde{\omega}}\otimes_{\tau^\eps\circ\tilde{W}}\tilde{\mathbb{Q}}^\ast_{\tilde{\omega}}$
coincides with $\tilde{\mathbb{Q}}^\ast_{\tilde{\omega}}$ on $(\tilde{\cF})^{\tau^\eps\circ \tilde{W}(\tilde{\omega})}$.
Second, denote by
\begin{align*}
\tilde{\Prob}^\ast=\Prob^\eps\otimes_{\tau^\eps\circ\tilde{W}} {\tilde{\mathbb{Q}}^\ast}_{\cdot}
\end{align*}
 the unique probability measure in $\mathcal{P}(\tilde{\Omega})$ such that
$\{\delta_{\tilde{\omega}}\otimes_{\tau^\eps\circ\tilde{W}}\tilde{\mathbb{Q}}^\ast_{\tilde{\omega}}\}_{\tilde{\omega}\in\tilde{\Omega}}$ is an r.c.p.d.~of
$\tilde{\Prob}^\ast$ given $\tilde{\cF}_{\tau^\eps\circ\tilde{W}}$ and that 
$\tilde{\Prob}^\ast$ coincides with $\Prob^\eps$ on 
$\tilde{\cF}_{\tau^\eps\circ\tilde{W}}$. 
In particular, $\tilde{\Prob}^\ast(\tilde{E}_1)>0$.

\textit{Step~4. Martingale properties of $\tilde{\Prob}^\ast$} (cf.~Lemma~3.3 in \cite{ElKarouiTanII}):
For $\Prob^\ast$-a.e.~$\tilde{\omega}\in\tilde{\Omega}$, every $\varphi\in C^2_b(\R^{2d})$, and every
$T_1\ge T$, the process 
$(C^{\tau^\eps\circ\tilde{W}(\tilde{\omega})}_s(\varphi))_{\tau^\eps\circ\tilde{W}(\tilde{\omega})\le s\le T_1}$
is a $(\tilde{\mathbb{Q}}^\ast_{\tilde{\omega}},\tilde{\F})$- and thus also a 
$(\delta_{\tilde{\omega}}\otimes_{\tau^\eps\circ\tilde{W}}\tilde{\mathbb{Q}}^\ast_{\tilde{\omega}},\tilde{\F})$-martingale. By Galmarino's test (see, e.g., Theorem~100 in \cite{DM}),
$\delta_{\tilde{\omega}}\otimes_{\tau^\eps\circ\tilde{W}}\tilde{\mathbb{Q}}^\ast_{\tilde{\omega}}
 (\tau^\eps\circ\tilde{W}=\tau^\eps\circ\tilde{W}(\tilde{\omega}))=1$.
 Hence
 $(C_s(\varphi))_{\tau^\eps\circ\tilde{W}(\tilde{\omega})\le s\le T_1}$
is a $(\delta_{\tilde{\omega}}\otimes_{\tau^\eps\circ\tilde{W}}\tilde{\mathbb{Q}}^\ast_{\tilde{\omega}},\tilde{\F})$-martingale.
Thus, by Theorem~1.2.10 in \cite{SVbook}, 
 $(C_s(\varphi))_{t\le s\le T_1}$
is a 
$(\tilde{\Prob}^\ast,\tilde{\F})$-martingale.}

{\color{black} \textit{Step~5.} Gluing controls together in a measurable way 
as it has been done in the context of volatility control in 
 Section~3 of \cite{NeufeldNutz13EJP}, we show that there exists an
$(X^{\eps,+},\tilde{a}^\ast,\tilde{p}^\ast,\tilde{q}^\ast)\in\mathbb{S}^2(\F,\Prob,\R^d)\times
 \mathcal{A}\times \mathbb{L}^2(\F,\Prob,\R^d)\times\mathbb{L}^2(\F,\Prob,\R^{d\times d})$
that coincides with $(X^\eps,a^\eps,p^\eps,q^\eps)$ on $\llb 0,\tau^\eps\rrb$ and  that satisfies
\begin{align}\label{E:Extension:ProbAst999}
\tilde{\Prob}^\ast=
 \Prob\circ(W^{t,\mathring{\mathbf{x}}},X^{\eps,+},
 \delta_{\tilde{a}^\ast_s}(da)\,ds,\delta_{\tilde{p}^\ast_s}(dp)\,ds,\delta_{\tilde{q}^\ast_s}(dq)\,ds)^{-1}.
\end{align}
First, we  establish the existence of  a `measurable' map
\begin{align*}
\upsilon=(\upsilon_A,\upsilon_P,\upsilon_Q)&:
\tilde{E}_1\to
 \mathcal{A}\times\mathbb{L}^1(\F,\Prob,\R^d)\times\mathbb{L}^2(\F,\Prob,\R^{d\times d}),\\
 &\qquad \tilde{\omega}\mapsto (\tilde{a},\tilde{p},\tilde{q}),
\end{align*}
such that, with $\tilde{\tau}:=\tau^\eps\circ\tilde{W}$,  the measures $\tilde{\mathbb{Q}}^\ast_{\tilde{\omega}}$ satisfy
\begin{align*}
&(\tilde{M}_A^{\tilde{\tau}(\tilde{\omega})}(s,da),\tilde{M}_{P}^{\tilde{\tau}(\tilde{\omega})}(s,dp),\tilde{M}_{Q}^{\tilde{\tau}(\tilde{\omega})}(s,dq))\\&\qquad =
(\delta_{\tilde{a}_s(\tilde{W}^{\tilde{\tau}(\tilde{\omega})})}(da),
\delta_{\tilde{p}_s(\tilde{W}^{\tilde{\tau}(\tilde{\omega})})}(dp),
\delta_{\tilde{q}_s(\tilde{W}^{\tilde{\tau}(\tilde{\omega})})}(dq)),\qquad
 \text{$\tilde{\mathbb{Q}}^\ast_{\tilde{\omega}}$-a.s.}
 \end{align*}
To this end, consider\footnote{
Recall that $\Upsilon(\tilde{a},\ldots)=\Prob\circ (W,\delta_{\tilde{a}_s}(da)\,ds, \ldots)$ and also  keep 
Lemma~\ref{L:Ptxx:Ptxxapq}, \eqref{E:definition:M},  and \eqref{E:QstarQstar} in mind regarding the non-emptiness of $\mathbf{Y}$.
} (cf.~the set $A$ in Step~2 on p.~9 in \cite{NeufeldNutz13EJP}) the Borel set
\begin{align*}
&\mathbf{Y}:=\{
(\tilde{\omega},\tilde{a},\tilde{p},\tilde{q})\in\tilde{E}_1\times
  \mathcal{A}\times\mathbb{L}^1(\F,\Prob,\R^d)\times\mathbb{L}^2(\F,\Prob,\R^{d\times d}):\,\\
&\quad\tilde{\mathbb{Q}}^\ast_{\tilde{\omega}}
  \circ(\tilde{W}^{\tilde{\tau}(\tilde{\omega})},\tilde{M}^{\tilde{\tau}(\tilde{\omega})}_A(s,da)\,ds,
  \tilde{M}^{\tilde{\tau}(\tilde{\omega})}_P(s,dp)\,ds,
  \tilde{M}^{\tilde{\tau}(\tilde{\omega})}_Q(s,dq)\,ds)^{-1}
 =\Upsilon(\tilde{a},\tilde{p},\tilde{q})
\}.
\end{align*}
Then, by Proposition~7.49 in \cite{BertsekasShreve78book}, there exists an analytically measurable
map 
\begin{align*}
\upsilon=(\upsilon_A,\upsilon_P,\upsilon_Q)&:\mathrm{proj}_{\tilde{E}_1}(\mathbf{Y})=\{\tilde{\omega}\in \tilde{E}_1:\,\exists 
(\tilde{a},\tilde{p},\tilde{q}):\,(\tilde{\omega},\tilde{a},\tilde{p},\tilde{q})\in\mathbf{Y}\}\\ &\qquad\to 
\mathcal{A}\times\mathbb{L}^1(\F,\Prob,\R^d)\times\mathbb{L}^2(\F,\Prob,\R^{d\times d})
\end{align*}
whose graph $\{(\tilde{\omega},\upsilon(\tilde{\omega})):\,\tilde{\omega}\in\mathrm{proj}_{\tilde{E}_1}(\mathbf{Y})\}$
is a subset of $\mathbf{Y}$. 
 Note that, by \eqref{E:QstarQstar}, $\tilde{\Prob}^\eps(\tilde{E}_1\setminus \mathrm{proj}_{\tilde{E}_1}(\mathbf{Y}))=0$.
Thus (cf.~the discussion before \eqref{E:QstarQstar}) there is a map
$\tilde{\upsilon}=(\tilde{\upsilon}_A,\tilde{\upsilon}_P,\tilde{\upsilon}_Q)$ from 
$\tilde{E}_1$
 to
 $\mathcal{A}\times\mathbb{L}^2(\F,\Prob,\R^d)\times\mathbb{L}^2(\F,\Prob,\R^{d\times d})$
 that is $\tilde{\cF}_{\tilde{\tau}}$-measurable\footnote{
 This  is needed below for progressive measurability of the processes $\tilde{a}^m$, etc.
 } and that  coincides with $\upsilon$, $\tilde{\Prob}^\eps$-a.e.~on
 $\tilde{E}_1$.\footnote{
 I.e., $\tilde{\upsilon}:\tilde{E}_1\to\mathcal{A}\times\cdots$, $\tilde{\omega}\mapsto \tilde{\upsilon}(
 \tilde{\omega})=\tilde{\upsilon}(\tilde{\omega}_{\cdot\wedge \tilde{\tau}})$ with
 $\tilde{\mathbb{Q}}^\ast\circ(\tilde{W}^{\tilde{\tau}(\tilde{\omega})},\tilde{M}_A^{\tilde{\tau}(\tilde{\omega})}
 (s,da)\,ds,\ldots)^{-1}=
 \Prob\circ (W,\delta_{[\tilde{\upsilon}_A(\tilde{\omega})]_s}(da)\,ds,\ldots)^{-1}
 =\Upsilon(\tilde{\upsilon}(\tilde{\omega}))$.
 }
 Now, for each $m\in\N$, let $\{B^{m,n}\}_{n\in\N}$ be a countable Borel partition of  
 $\mathcal{A}\times\mathbb{L}^2(\F,\Prob,\R^d)\times\mathbb{L}^2(\F,\Prob,\R^{d\times d})$,
 which is understood to be equipped with a suitable metric (cf.~\cite{NeufeldNutz13EJP}).
Under said metric, the diameter of each set $B^{m,n}$ is not to exceed $1/m$.
For each $m$, $n\in\N$, fix $\beta^{m,n}\in B^{m,n}$. For each $m\in\N$, define
$(\tilde{a}^m,\tilde{p}^m,\tilde{q}^m)
:\R_+\times E_1\to A\times\R^d\times\R^{d\times d}$
by\footnote{
With $\tilde{\upsilon}^m:\tilde{E}_1\to\mathcal{A}\times\cdots$ defined by 
$\tilde{\upsilon}^m(\tilde{\omega}):=\sum_{n\in\N} \beta^{m,n}.\bfone_{B^{m,n}}(\tilde{\nu}(\tilde{\omega}))$,
we have
$(\tilde{a}^m,\cdots)(s,\omega)=\bfone_{[\tau^\eps(\omega),\infty)}(s). 
[\tilde{\upsilon}^m((W^{t,\mathring{\mathbf{x}}},\ldots)(\omega)]_{s-\tau^\eps(\omega)}(W^{\tau^\eps(\omega)}(\omega))$.
Note that $\tilde{\upsilon}^m\to\tilde{\upsilon}$ and thus (see below) 
$(\tilde{a}^\dagger,\ldots)(s,\omega)=\bfone_{[\tau^\eps(\omega),\infty)}(s).[\tilde{\upsilon}((W^{t,\mathring{\mathbf{x}}},
\ldots)(\omega)]_{s-\tau^\eps(\omega)}(W^{\tau^\eps(\omega)}(\omega))$.
} (cf.~\eqref{E:PrimeControl})
\begin{align*}
&(\tilde{a}^m,\tilde{p}^m,\tilde{q}^m)(s,\omega):=\bfone_{[\tau^\eps(\omega),\infty)}(s)\,
\sum_{n\in\N} \beta^{m,n}_{s-\tau^\eps(\omega)}
(W^{\tau^\eps(\omega)}(\omega))\\ &\qquad \cdot \bfone_{
B^{m,n}}(\tilde{\upsilon}(\tilde{\omega}))\vert_{
\tilde{\omega}=(W^{t,\mathring{\mathbf{x}}},X^\eps,
\delta_{a^\eps_r}(da)\,dr,
\delta_{p^\eps_r}(dq)\,dr,\delta_{q^\eps_r}(dq)\,dr)(\omega)
}.
\end{align*}
Exactly as on pp.~10 and 11 in \cite{NeufeldNutz13EJP}, we obtain the existence of a limit
$(\tilde{a}^\dagger,\tilde{p}^\dagger,\tilde{q}^\dagger)$ of the sequence $(\tilde{a}^m,\tilde{p}^m,\tilde{q}^m)_m$.
We use now concatenations $\omega\otimes_\tau\omega^\prime\in\Omega$ 
of paths $\omega$, $\omega^\prime\in\Omega$ at $\F$-stopping times $\tau$
as in \cite{NeufeldNutz13EJP}, i.e., 
\begin{align*}
(\omega\otimes_\tau\omega^\prime)_s:=\bfone_{[0,\tau(\omega))}(s).\omega_s+
\bfone_{[\tau(\omega),\infty)}(s).\left[\omega_{\tau(\omega)}+\omega^\prime_{s-\tau(\omega)}\right].
\end{align*}
Thus, given $\omega\in E_1$, we can define
\begin{align*}
(\tilde{a}^\omega,\tilde{p}^\omega,\tilde{q}^\omega):=\lim_m   (\tilde{a}^m,\tilde{p}^m,\tilde{q}^m)(\cdot+\tau^\eps(\omega),\omega\otimes_{\tau^\eps} \cdot):\R_+\times\Omega\to A\times\R^d\times\R^{d\times d}.
\end{align*}
Note that $\tilde{a}^\omega=\tilde{a}^\dagger (\cdot+\tau^\eps(\omega),\omega\otimes_{\tau^\eps} \cdot)$.\footnote{
The left-hand side is used to show that certain laws are equal. 
The (measurable!) right-hand
side is used in the construction of the glued control $\tilde{a}^\ast$ below.
}
Then\footnote{
To see this, note that
\begin{align*}
\tilde{a}^\omega_r(\omega^\prime)&=\tilde{a}^\dagger_{r+\tau^\eps(\omega)}(\omega\otimes_{\tau^\eps}\omega^\prime)\\
&=\bfone_{[\tau^\eps(\omega),\infty)}(r+\tau^\eps(\omega)).
[\tilde{\upsilon}_A((W^{t,\mathring{\mathbf{x}}},\cdots)(\omega))]_{[r+\tau^\eps(\omega)]-\tau^\eps(\omega)}
(W^{\tau^\eps(\omega)}(\omega\otimes_{\tau^\eps}\omega^\prime))\\
&=[\tilde{\upsilon}_A((W^{t,\mathring{\mathbf{x}}},\cdots)(\omega))]_r
((\omega\otimes_{\tau^\eps}\omega^\prime)_{\cdot+\tau^\eps(\omega)}-
(\omega\otimes_{\tau^\eps}\omega^\prime)_{\tau^\eps(\omega)})\\
&=[\tilde{\upsilon}_A((W^{t,\mathring{\mathbf{x}}},\cdots)(\omega))]_r
(\omega^\prime).
\end{align*}
},  for $\Prob$-a.e.~$\omega\in E_1$, 
\begin{align*}
\tilde{\upsilon}((W^{t,\mathring{\mathbf{x}}},X^\eps,
\delta_{a^\eps_s}(da)\,ds,
\delta_{p^\eps_s}(dq)\,ds,\delta_{q^\eps_s}(dq)\,ds)(\omega))=(\tilde{a}^\omega,\tilde{p}^\omega,\tilde{q}^\omega)
\end{align*}
and thus
\begin{align*}
&\Upsilon(\tilde{a}^\omega,\tilde{p}^\omega,\tilde{q}^\omega)
 =
\tilde{\mathbb{Q}}^\ast_{\tilde{\omega}}
  \circ(\tilde{W}^{\tilde{\tau}(\tilde{\omega})},\tilde{M}^{\tilde{\tau}(\tilde{\omega})}_A(s,da)\,ds,
  \tilde{M}^{\tilde{\tau}(\tilde{\omega})}_P(s,dp)\,ds,\\&\qquad\qquad
  \tilde{M}^{\tilde{\tau}(\tilde{\omega})}_Q(s,dq)\,ds)^{-1}\vert_{
\tilde{\omega}=(W^{t,\mathring{\mathbf{x}}},X^\eps,
\delta_{a^\eps_s}(da)\,ds,
\delta_{p^\eps_s}(dq)\,ds,\delta_{q^\eps_s}(dq)\,ds)(\omega)
}.
\end{align*}
Next, define $(\tilde{a}^\ast,\tilde{p}^\ast,\tilde{q}^\ast):\R_+\times\Omega\to A\times\R^d\times\R^{d\times d}$ by
\begin{align*}
(\tilde{a}^\ast,\tilde{p}^\ast,\tilde{q}^\ast)(s,\omega):=
(a^\eps,p^\eps,q^\eps)(s,\omega).\bfone_{[0,\tau^\eps(\omega))}(s)+
(\tilde{a}^\dagger,\tilde{p}^\dagger,\tilde{q}^\dagger)(s,\omega).\bfone_{[\tau^\eps(\omega),\infty)}(s)
\end{align*}
whenever $\omega\in E_1$ and $(\tilde{a}^\ast,\tilde{p}^\ast,\tilde{q}^\ast):=(a^\eps,p^\eps,q^\eps)$ otherwise.
We denote by $X^{\eps,+}$  the solution of 
 \begin{align}
 dX^{\eps,+}_s=[b(s,X^\eps_{\cdot\wedge \tau^\eps},\tilde{a}^\ast_s)+\tilde{p}^\ast_s]\,ds
 +[\sigma(s,X^\eps_{\cdot\wedge \tau^\eps},\tilde{a}^\ast_s)+\tilde{q}^\ast_s]\,dW_s,\quad\text{$\Prob$-a.s.,}
 \end{align}
 on $[t,\infty)$
 with initial condition $X^{\eps,+}\vert_{[0,t]}=\mathbf{x}\vert_{[0,t]}$.
It suffices now
 to show that \eqref{E:Extension:ProbAst999}
holds or, equivalently (recall \eqref{E:QstarPeps}), that \eqref{E:ToShow:LawsEqual} below holds.}
{\color{black} To simplify the notation, we omit from now on
 the $(p,q)$-components from the ``control term" $(a,p,q)$.  Related obvious
modficiations are implicitly used.
Let $n\in\N$, $t\le t_1<\cdots<t_n$, $R_1=(R_1^1,R_1^2,R_1^3)$, $\ldots$, $R_n=(R_n^1,R_n^2,R_n^3)\in
\mathcal{B}(\R^d)\otimes\mathcal{B}(\R^d)\otimes\mathcal{B}(\R)$, $\varphi\in C_b(A)$.
We write $\tilde{M}$ instead of $\tilde{M}_A$. Also put
$\tilde{\mathfrak{X}}_{t_i}:=(\tilde{W}_{t_i},\tilde{X}_{t_i},\tilde{M}_{t_i}(\varphi))$. We will show that
\begin{equation}\label{E:ToShow:LawsEqual}
\begin{split}
&\tilde{\Prob}^\ast(R^{(1)}\cap\tilde{E}_1)\\&\qquad=\Prob\left(\bigcap_{i=1}^n \left\{
W^{t,\mathring{\mathbf{x}}}_{t_i}\in R_i^1,
X^{\eps,+}_{t_i}
\in R^2_i,
\int_0^{t_i} \varphi(\tilde{a}^\ast_s)\,ds\in R^3_i
\right\}\cap E_1\right),
\end{split}
\end{equation}
where $R^{(i_0)}:=\cap_{i=i_0}^n \{\tilde{\mathfrak{X}}_{t_i}\in R_i\}$. 
First, note  that 
(cf.~p.~140 in \cite{SVbook})
\begin{equation}\label{E:ToShow:LawsEqual2}
\begin{split}
&\tilde{\Prob}^\ast(R^{(1)}\cap\tilde{E}_1)=\int_{\tilde{E}_1}\Biggl[
\bfone_{[0,t_1)}(\tilde{\tau}(\tilde{\omega}))\,\tilde{\mathbb{Q}}^\ast_{\tilde{\omega}}(R^{(1)})
\\ &\qquad\qquad+\sum_{i=1}^{n-1}\bfone_{[t_i,t_{i+1})}(\tilde{\tau}(\tilde{\omega}))
\prod_{j=1}^i \bfone_{\{\tilde{\mathfrak{X}}_{t_j}\in R_j\}} (\tilde{\omega})
\cdot\tilde{\mathbb{Q}}^\ast_{\tilde{\omega}}\left(\bigcap_{j=i+1}^n \{\tilde{\mathfrak{X}}_{t_j}\in R_j\}\right)
\\
&\qquad\qquad+\bfone_{[t_n,\infty)}(\tilde{\tau}(\tilde{\omega}))\prod_{i=1}^n \bfone_{\{\tilde{\mathfrak{X}}_{t_i}\in R_i\}}(\tilde{\omega})
\Biggr]\,\Prob^\eps(d\tilde{\omega}).
\end{split}
\end{equation}
 Next, until the end of this paragraph, let  
$\tilde{\omega}=(W^{t,\mathring{\mathbf{x}}}, 
X^\eps,
\delta_{a^\eps_s}(da)ds)(\omega)$
for every $\omega\in\Omega$.
Consider Borel maps $[\xi(\omega)]_{t_j}
:\Omega\to\R^d$ that satisfy 
\begin{align*}
&[\xi(\omega)]_{t_j}(\tilde{W}^{\tilde{\tau}(\tilde{\omega})})=
\tilde{X}_{t_j}=
\tilde{X}_{\tilde{\tau}(\tilde{\omega})}(\tilde{\omega})\\ &
\qquad +\int_0^{t_j-\tilde{\tau}(\tilde{\omega})} \left[
b_{\tilde{\tau}(\tilde{\omega})+r}(\tilde{X}_{\cdot\wedge\tilde{\tau}(\tilde{\omega})}(\tilde{\omega}),
\tilde{a}^\omega_r(\tilde{W}^{\tilde{\tau}(\tilde{\omega})}))
\right]\,dr\\ &\qquad
+\int_0^{t_j-\tilde{\tau}(\tilde{\omega})} \left[
\sigma_{\tilde{\tau}(\tilde{\omega})+r}
(\tilde{X}_{\cdot\wedge\tilde{\tau}(\tilde{\omega})}(\tilde{\omega}),
\tilde{a}^\omega_r(\tilde{W}^{\tilde{\tau}(\tilde{\omega})}))
\right]\,d\tilde{W}^{\tilde{\tau}(\tilde{\omega})}_r,\text{ $\tilde{\mathbb{Q}}^\ast_{\tilde{\omega}}$-a.s.,}
\end{align*}
and\footnote{
Recall the notation $X^{t,\mathbf{x};a}$ in \eqref{E:mv:CSDE}. 
}
 $[\xi(\omega)]_{t_j}(W^{\tilde{\tau}(\tilde{\omega})})=
X_{t_j}^{\tilde{\tau}(\tilde{\omega}),\tilde{X}(\tilde{\omega});\tilde{a}^\dagger}$, 
$\Prob$-a.s.~(the results in Step~4 in this proof 
guarantee\footnote{
For more details, one can proceed as in the the proof of Lemma~\ref{L:Ptxx:Ptxxapq}.
} the existence of such maps).
Let $i\in\{1,\ldots,n-1\}$. Then 
\begin{align*}
&\tilde{\mathbb{Q}}^\ast_{\tilde{\omega}}(R^{(i)})=
\tilde{\mathbb{Q}}^\ast_{\tilde{\omega}}\left(\cap_{j=i}^n \{\tilde{\mathfrak{X}}_{t_j}\in R_j\}\right)\\
&=\tilde{\mathbb{Q}}^\ast_{\tilde{\omega}}\left(\cap_{j=i}^n \{\
\tilde{W}^{\tilde{\tau}({\tilde{\omega}})}_{t_j-\tilde{\tau}(\tilde{\omega})}+\tilde{W}_{\tilde{\tau}(\tilde{\omega})}(\tilde{\omega})
\in R_j^1,\tilde{X}_{t_j}\in R_j^2,\tilde{M}_{t_j} (\varphi)\in R^3_j
\}\right)
\\
&=\tilde{\mathbb{Q}}^\ast_{\tilde{\omega}}\Biggl(\cap_{j=i}^n \Biggl\{\
\tilde{W}^{\tilde{\tau}({\tilde{\omega}})}_{t_j-\tilde{\tau}(\tilde{\omega})}+\tilde{W}_{\tilde{\tau}(\tilde{\omega})}(\tilde{\omega})
\in R_j^1,
[\xi(\omega)]_{t_j}
(\tilde{W}^{\tilde{\tau}(\tilde{\omega})}_{\cdot\wedge (t_j-\tilde{\tau}(\tilde{\omega}))})\in R_j^2,\\ &\qquad\qquad\qquad
\tilde{M}^{\tilde{\tau}(\tilde{\omega})}_{t_j} (\varphi)+\tilde{M}_{\tilde{\tau}(\tilde{\omega})}(\varphi)\in R^3_j
\Biggr\}\Biggr)\\
&=\Prob\Biggl(\cap_{j=i}^n\Biggl\{W_{t_j-\tilde{\tau}(\tilde{\omega})}+\tilde{W}_{\tilde{\tau}(\tilde{\omega})}
(\tilde{\omega})\in R_j^1, 
[\xi(\omega)]_{t_j}
(W_{\cdot\wedge (t_j-\tilde{\tau}(\tilde{\omega}))})\in R_j^2,\\
&\qquad\qquad\qquad \int_0^{t_j-\tilde{\tau}(\tilde{\omega})}\varphi(\tilde{\alpha}_s^{\omega})\,ds
+\tilde{M}_{\tilde{\tau}(\tilde{\omega})}(\varphi)\in R^3_j\Biggr\}\Biggr)\\
&=\Prob\Biggl(
\cap_{j=i}^n\Biggl\{
W^{t,\mathring{\mathbf{x}}}_{t_j}\in R^1_j, 
X^{\eps,+}_{t_j}
\in R^2_j,
\int_0^{t_j} \varphi(\tilde{a}^\ast_s)\,ds\in R^3_j
\Biggr\}
\Biggr\vert\cF_{\tau^\eps}
\Biggr)(\omega) 
\end{align*}
for $\Prob$-a.e.~$\omega\in E_1$.
Plugging the right-hand side above into \eqref{E:ToShow:LawsEqual2} and using \eqref{E:ProbEps}\footnote{
In the present  simplified notation, 
  \eqref{E:ProbEps} is just 
$\Prob^\eps=\Prob\circ(W^{t,\mathring{\mathbf{x}}},X^{t,\mathbf{x};a^\eps},\delta_{a^\eps_s}\,ds)^{-1}$.}
yields
\eqref{E:ToShow:LawsEqual}. We can conclude that \eqref{E:Extension:ProbAst999} holds.
}

\textit{Step~6}. 
Consider
\begin{align}\label{Es:tau:eps+}
\tau^{\eps,+}:=\tau^\eps+\tilde{\delta}^\ast_{
(W^{t,\mathring{\mathbf{x}}}, X^\eps, 
\delta_{a^\eps_s}(da)ds,
\delta_{p^\eps_s}(da)ds,\delta_{q^\eps_s}(da)ds)
},
\end{align}
which is an $\F$-stopping time (Lemma~V.3 in \cite{Subbotina06}).
Define $\varrho^{\eps,+}:\R_+\times\Omega\to\R_+$ by
\begin{align*}
\varrho^{\eps,+}_s:=\varrho^\eps_s.\bfone_{[0,\tau^\eps)}(s)
+\tau^\eps.\bfone_{[\tau^\eps,\tau^{\eps,+})}(s)
+s.\bfone_{[\tau^{\eps,+},\infty)}(s).
\end{align*}
By~\eqref{E:definition:M} and
 \eqref{Es:tau:eps+}, $s-\eps\le\varrho^{\eps,+}_s\le s$ for all $s\in\R_+$. 
Put $(\tilde{\varrho}_s,\tilde{\varrho}^{+}_s,\tilde{\tau}^+):=(\varrho^\eps_s,\varrho^{\eps,+}_s,\tau^{\eps,+})(\tilde{W})$.
Then, for all $y\ge v(t,\mathbf{x})$ and $s\in [t,T]$, 
\begin{equation}\label{Es:FinalInequality}
\begin{split}
&\Mean\left[
v(\varrho^{\eps,+}_s\wedge\tau^{\eps,+},X^{\eps,+})
\right]=\Mean^{\tilde{\Prob}^\ast} \left[
v(\tilde{\varrho}^{+}_s\wedge(\tilde{\tau}+\tilde{\delta}^\ast),\tilde{X})
\right]\\ &\qquad =
\int_{\tilde{E}_1\cap\{\tilde{\tau}^+\le s\}}
\Mean^{\tilde{\mathbb{Q}}^\ast_{\tilde{\omega}}} \left[
v(\tilde{\tau}(\tilde{\omega})+\tilde{\delta}^\ast_{\tilde{\omega}},\tilde{X})
\right]\,
\Prob^\eps(d\tilde{\omega})\\
&\qquad\qquad+\int_{\tilde{E}_1^c\cup(\tilde{E}_1\cap\{\tilde{\tau}^+>s\})}  v(\tilde{\varrho}_s(\tilde{\omega})\wedge\tilde{\tau}(\tilde{\omega}),\tilde{X}
(\tilde{\omega}))\,\Prob^\eps(d\tilde{\omega})\\
&\qquad \le \int_{\tilde{E}_1\cap\{\tilde{\tau}^+\le s\}} \left[v(\tilde{\tau}(\tilde{\omega}),\tilde{X}(\tilde{\omega}))+\eps\tilde{\delta}^\ast_{\tilde{\omega}}\right]
\,\Prob^\eps(d\tilde{\omega})\\
&\qquad\qquad+\int_{\tilde{E}_1^c\cup(\tilde{E}_1\cap\{\tilde{\tau}^+>s\})}  v(\tilde{\varrho}_s(\tilde{\omega})\wedge\tilde{\tau}(\tilde{\omega},
\tilde{X}(\tilde{\omega}))\,\Prob^\eps(d\tilde{\omega})\\
&\qquad = \Mean\left[\bfone_{E_1\cap\{\tau^{\eps,+}\le s\}}.(v(\tau^\eps,X^\eps)
+\eps\cdot(\tau^{\eps,+}-\tau^\eps)
)\right]\\
&\qquad\qquad  +\Mean\left[\bfone_{E_1^c\cup({E}_1\cap\{\tau^{\eps,+}> s\})}.(v(\varrho^\eps_s\wedge\tau^\eps,X^\eps)\right]\\
&\qquad =\Mean\left[ v(\varrho^\eps_s\wedge\tau^\eps,X^\eps)+\eps\cdot(\tau^{\eps,+}-\tau^\eps).\bfone_{E_1\cap\{\tau^{\eps,+}\le s\}}\right]\\
&\qquad \le y+\eps\cdot\Mean[s\wedge\tau^\eps-t]+\eps\cdot\Mean[(\tau^{\eps,+}-\tau^\eps).\bfone_{\{\tau^{\eps,+}\le s\}}], 
\end{split}
\end{equation}
i.e., $\Mean [\widehat{v}(\varrho^{\eps,+}_s\wedge\tau^{\eps,+},X^{\eps,+},y)]\le \eps\cdot\Mean[s\wedge \tau^{\eps,+}-t]$.
The first inequality in \eqref{Es:FinalInequality} follows from
$\widehat{v}(\tilde{\tau}(\tilde{\omega}),\tilde{X}(\tilde{\omega}),
v(\tilde{\tau}(\tilde{\omega}),\tilde{X}(\tilde{\omega}))
)\in K$,
which (as assumed in the statement of our lemma) implies 
\begin{align*}
\mathcal{E}_+(\tilde{\tau}(\tilde{\omega}),\tilde{X}(\tilde{\omega}))\in
\mathcal{QTS}_{\widehat{v},K}(\tilde{\tau}(\tilde{\omega}),\tilde{X}(\tilde{\omega}),
v(\tilde{\tau}(\tilde{\omega}),\tilde{X}(\tilde{\omega}))),
\end{align*}
together with the definitions of  $\tilde{\mathbb{Q}}^\ast_{\tilde{\omega}}$,   
$\mathbb{Q}^{\ast\ast}$, and $\mathbf{M}$
in  \eqref{E:QstarQstar},  Remark~\ref{R:Before:L:Step3}, 
and \eqref{E:definition:M}.

Define $(p^{\eps,+},q^{\eps,+}):\R_+\times\Omega\to\R^d\times\R^{d\times d}$ by
\begin{align*}
(p^{\eps,+},q^{\eps,+})_s:=\bfone_{[0,\tau^\eps)}(s).(\bar{p},\bar{q})_s+
\bfone_{[\tau^\eps,\tau^{\eps,+})}(s).(\tilde{p}^\ast,\tilde{q}^\ast)_s.
\end{align*}
By \eqref{E:tptq:eps} for $(\tau^\eps,\bar{p},\bar{q})$
and \eqref{E:Mp:Mq} with \eqref{E:definition:M}, we have 
 \eqref{E:tptq:eps}  for $(\tau^{\eps,+},p^{\eps,+},q^{\eps,+})$.

We can conclude that $(\tau^{\eps,+},\varrho^{\eps,+},\tilde{a}^\ast,p^{\eps,+},q^{\eps,+},X^{\eps,+})$
is our desired ``extended" $\eps$-approximate solution.
\end{proof}

\bibliographystyle{amsplain}
\bibliography{K20R1}

\end{document}